\documentclass[english,11pt]{article}

\usepackage[a4paper,bottom=3.5cm,top=2cm,left=2.54cm,right=2.54cm]{geometry}
\usepackage[english]{babel}
\usepackage[utf8]{inputenc}
\usepackage{url}
\usepackage[usenames]{color}
\usepackage{amsmath}
\usepackage{amssymb}
\usepackage{amsthm}
\usepackage{amsfonts}
\usepackage{bbm}
\usepackage{soul}
\usepackage{graphicx}
\usepackage{subcaption}
\usepackage{yfonts}
\usepackage[backend=bibtex,maxnames=4,url=false,style=numeric,isbn=false, giveninits=true,eprint=false,sorting=none,doi=false,date=year]{biblatex}
\usepackage{alphalph}
\usepackage{enumitem}
\usepackage{courier}
\usepackage[dont-mess-around]{fnpct}
\usepackage{etoolbox}
\usepackage{comment}
\usepackage{enumerate}
\usepackage{hhline}

\newcommand{\defeq}{:=}
\newcommand{\norm}[1]{\| #1\|}

\newcommand{\abs}[1]{\left|#1\right|}

\newcommand{\eps}{\varepsilon}
\newcommand{\one}{\mathbbm 1}
\newcommand{\charf}{\chi}
\newcommand{\bigO}{O}

\renewcommand{\leq}{\leqslant}
\renewcommand{\geq}{\geqslant}
\renewcommand{\phi}{\varphi}

\renewcommand{\b}{\bar}

\let\sp\relax
\newcommand{\sp}[1]{\langle #1 \rangle}
\newcommand{\lsc}{lower semicontinuous}

\newcommand{\adj}[1]{#1^*}

\newcommand{\subdiff}{\partial}

\newcommand{\reg}{\mathcal J}
\newcommand{\fid}{\mathcal H}
\newcommand{\dJ}{\subdiff \reg}
\newcommand{\RI}{\overline \R}
\newcommand{\Jminsol}{u^\dagger_\reg}

\newcommand{\musc}{\mu^\dagger}
\newcommand{\Dsymm}[1]{D_{#1}^\mathrm{symm}}
\newcommand{\D}[2]{D_{#1}^{#2}}
\newcommand{\infconv}{\,\square\,}
\renewcommand{\d}{\,\mathrm{d}}


\DeclareMathOperator{\dom}{dom}

\DeclareMathOperator{\interior}{int}




\DeclareMathOperator{\M}{\mathcal M}

\DeclareMathOperator{\Lip}{Lip}
\newcommand{\R}{\mathbb R}
\newcommand{\N}{\mathbb N}
\DeclareMathOperator{\KR}{KR}


\newcommand{\weakto}{\mathrel{\rightharpoonup}}

\newcommand{\wsto}{\weakto^*}


\newtheorem{theorem}{Theorem}
\numberwithin{theorem}{section}
\newtheorem{corollary}[theorem]{Corollary}
\newtheorem{lemma}[theorem]{Lemma}
\newtheorem{proposition}[theorem]{Proposition}

\theoremstyle{definition}
\newtheorem{definition}[theorem]{Definition}
\newtheorem{assumption}{Assumption}
\newtheorem*{assumption*}{Assumption}
\newtheorem{remark}[theorem]{Remark}
\newtheorem*{remark*}{Remark}
\newtheorem*{definition*}{Definition}
\newtheorem{example}[theorem]{Example}


\newcommand{\U}{\mathcal U}
\newcommand{\V}{\mathcal V}
\newcommand{\X}{\mathcal X}
\newcommand{\Y}{\mathcal Y}

\let\P\relax
\newcommand{\P}{\mathcal P}

\newcommand{\C}{{\mathcal C}}

\input{settings.tex}

\newcommand{\revtwo}[1]{#1}
\newcommand{\revthree}[1]{#1}

\bibliography{abbrevs.bib,BL.bib,IP.bib}

\title{Variational regularisation for inverse problems with imperfect forward operators and general noise models}

\author{Leon Bungert\footnote{Department Mathematik, University of Erlangen-N\"urnberg, Cauerstrasse~11, 91058 Erlangen, Germany, \mbox{leon.bungert;martin.burger@fau.de}} \and Martin Burger\footnotemark[1] \and Yury Korolev\footnote{Department of Applied Mathematics and Theoretical Physics, University of Cambridge, Wilberforce Road, Cambridge CB3 0WA, UK, \mbox{yk362;cbs31@cam.ac.uk}} \and Carola-Bibiane Sch\"onlieb\footnotemark[2]}
\date{\today}

\begin{document}

\maketitle

\begin{abstract}
We study variational regularisation methods for inverse problems with imperfect forward operators whose errors can be modelled by order intervals in a partial order of a Banach lattice.
We carry out analysis with respect to existence and convex duality for general data fidelity terms and regularisation functionals. 
Both for a-priori and a-posteriori parameter choice rules, we obtain convergence rates of the regularised solutions in terms of Bregman distances.
Our results apply to fidelity terms such as Wasserstein distances, $\phi$-divergences, norms, as well as sums and infimal convolutions of those.
\end{abstract}

\textbf{Keywords:} inverse problems, convergence rates, imperfect forward models, Banach lattices, $\phi$-divergences, Kullback--Leibler divergence, Wasserstein distances, Bregman distances, discrepancy principle 

\tableofcontents

\section{Introduction}



We consider linear inverse problems
 \begin{equation}\label{eq:Au=f}
    Au = \b f,
\end{equation}
where $A \colon \X \to \Y$ is a linear bounded operator (referred to as the forward operator or the forward model) acting between two Banach spaces $\X$ and $\Y$. The exact measurement $\b f$ is typically not available and only a noisy version of it $f_\delta$ is known along with an estimate of the noise level $\delta$. Since the inversion of~\eqref{eq:Au=f} is often unstable with respect to noise and hence ill-posed, it requires regularisation. Variational regularisation replaces solving~\eqref{eq:Au=f} by the following optimisation problem
\begin{equation}\label{eq:var_reg_1}
\min_{u \in \X} \frac1\alpha \fid(Au \mid f_\delta) + \reg(u),    \end{equation}
where $\fid(\cdot \mid f)$ is a so-called \emph{data fidelity} function that models statistical properties of the noise in $f$
and $\reg(\cdot)$ is a \emph{regularisation} functional  that stabilises the inversion. The \emph{regularisation  parameter} $\alpha>0$ balances the influence of the data fidelity and the regularisation. The amount of noise $\delta$ in the measurement $f_\delta$ is assumed to be such that
\begin{equation}\label{ineq:noise_level}
    \fid(\b f \mid f_\delta) \leq \delta.
\end{equation}

The fidelity function often depends only on the difference  of the arguments, i.e. $\fid( v \mid f) = h(v -f)$ for some function $h$. The most common example is $\fid(v \mid f) = \frac12\norm{v - f}^2$. There are, however, cases when the fidelity function depends on its arguments in a more complicated manner; an example is the Kullback--Leibler divergence that is used to model Poisson noise~\cite{Le_Poisson_noise:2007}, where $\fid(v \mid f) = \int (v \log\frac{v}{f} - (v-f))\d x$ \revtwo{(see also the review paper~\cite{hohage2016poisson})}. 
Problems with general fidelity functions were analysed in~\cite{Poschl:2008,Benning_Burger_general_fid_2011}.

To guarantee convergence of the minimisers of~\eqref{eq:var_reg_1} to a solution of~\eqref{eq:Au=f} as the noise level $\delta$ decreases, the regularisation parameter $\alpha$ needs to be chosen as a function of the measurement noise $\alpha=\alpha(\delta)$ (a-priori parameter choices) or of the measurement itself and of measurement noise $\alpha=\alpha(f_\delta,\delta)$ (a-posteriori parameter choices). For a-priori parameter choice rules, convergence rates for solutions of~\eqref{eq:var_reg_1} in different scenarios have been obtained, e.g., in~\cite{Engl:1989,Burger_Osher:2004,Resmerita:2005,Hofmann:2007,Grasmair:2011}. 
A classical a-posteriori parameter choice rule is the so-called discrepancy principle, originally introduced in~\cite{Morozov:1966} and later studied in, e.g.,~\cite{Bonesky:2008,Anzengruber_Ramlau:2009,Sixou:2018}. Roughly speaking, it consists in choosing $\alpha=\alpha(f_\delta,\delta)$ such that the following equation is satisfied
\begin{equation*}
    \fid(Au^\alpha \mid f_\delta) = \delta,
\end{equation*}
where $u^\alpha$ is the solution of~\eqref{eq:var_reg_1} corresponding to the regularisation parameter $\alpha$.

In many applications, not only the measurement $f_\delta$ is noisy, but also the forward operator $A$ that generated the data is not precisely known. Errors in the operator may come from the uncertainty in some model-related parameters such as the point-spread function of a microscope, simplified model geometry and/or discretisation. 
A classical approach to modelling errors in the forward operator assumes an error estimate in the operator norm, i.e.
\begin{equation}\label{eq:A_h}
    \norm{A_h-A}_{\mathcal L(\X,\Y)} \leq h,
\end{equation}
where $A_h \colon \X \to \Y$ is a linear bounded operator that we have numerical access to and $h \geq 0$ describes the approximation error (e.g.,~\cite{NeuSch90,TGSYag,Poeschl:2010,Bleyer_Ramlau:2013}). To guarantee convergence in this setting, the parameter $\alpha$ needs to be chosen as the function of $\delta$ and $h$ (a-priori choice rules) or of $\delta$, $h$, $f_\delta$ and $A_h$ (a-posteriori choice rules). Generalisations of the discrepancy principle to this setting are available~\cite{GONCHARSKII:1973,hofmann:1986,lu:2010}, but they usually rely on a triangle inequality that $\fid(\cdot \mid f)$ needs to satisfy.

An alternative approach to modelling operator errors using order intervals in Banach lattices was proposed in~\cite{Kor_Yag_IP:2013,Kor_IP:2014,mb_yk_jr:2019}. It assumes that the spaces $\X$ and $\Y$ have a lattice structure~\cite{Meyer-Nieberg} and, instead of~\eqref{eq:A_h}, lower and upper bounds for the operator are available
\begin{equation}\label{eq:Al_Au}
    A^l \leq A \leq A^u,
\end{equation}
where the inequalities are understood in the sense of a partial order for linear operators, i.e.
\begin{equation}\label{eq:po_oper}
    A^l u \leq Au \leq A^u u \quad \text{for all $u \geq 0$}.
\end{equation}
The inequalities in~\eqref{eq:po_oper} are understood in an abstract sense of a Banach lattice; which for $L^p$ spaces means inequality almost everywhere. 
In order for the partial order bounds~\eqref{eq:Al_Au} to be well-defined, we assume that $A \colon \X \to \Y$ is a regular operator~\cite{Meyer-Nieberg}, i.e. that it can be written as a difference of two positive operators, $A = A_1 - A_2$, where for any $u \geq 0$ it holds that $A_{1,2} u \geq 0$. 
Some examples of regular operators will be given later.

The approach~\eqref{eq:po_oper} to describing errors in the forward operator was studied in the context of the residual method in the case $\Y=L^\infty$ when the data fidelity is a characteristic function of a norm ball
\begin{equation}\label{eq:resid_meth_fid}
    \fid(\cdot \mid f_\delta) = \charf_{\norm{\cdot - f}_\infty \leq \delta}.
\end{equation}
 In this case, one solves the following problem
\begin{equation}\label{eq:var_po}
    \min_{u} \reg(u) \quad \text{s.t. $A^l u \leq f^u, \quad A^u u \geq f^l$,}
\end{equation}
where $f^l \defeq f_\delta - \delta \one$ and $f^u \defeq f_\delta + \delta \one$ are pointwise (a.e.) lower and upper bounds for the exact data $\b f$ in~\eqref{eq:Au=f} such that $f^l \leq \b f \leq f^u$ and $\one$ is the constant one-function. For comparison, with the data term~\eqref{eq:resid_meth_fid} and without an operator error,~\eqref{eq:var_reg_1} translates into
\begin{equation}\label{eq:resid_meth_1}
    \min_{u} \reg(u) \quad \text{s.t. $f^l \leq A u \leq f^u$,}
\end{equation}
where the constraint is equivalent to $\norm{Au - f_\delta}_\infty \leq \delta$. (In~\cite{AG_YK_TV:2016}, a connection is made between the lower and upper bounds $f^l,f^u$ and confidence intervals.)



One can show that the partial order based condition~\eqref{eq:Al_Au} implies the norm based condition~\eqref{eq:A_h}. Indeed, given $A^l,A^u$ as in~\eqref{eq:Al_Au}, one defines
\begin{equation*}
A_h \defeq \frac{A^u+A^l}{2}, \quad h \defeq \frac{\norm{A^u-A^l}}{2}.    
\end{equation*}
It can be readily verified that the so defined $A_h$ satisfies~\eqref{eq:A_h}. The opposite implication is, in general, wrong. Hence, if an estimate~\eqref{eq:Al_Au} is available, it allows one to describe the operator error more precisely and one may expect better reconstructions. Indeed, it was found in~\cite{mb_yk_jr:2019} that solving~\eqref{eq:var_reg_1} with $\fid(Au \mid f_\delta) = \norm{Au-f_\delta}_\infty$ and $\alpha$ chosen according to a generalised discrepancy principle~\cite{GONCHARSKII:1973} based on~\eqref{eq:A_h} produces overregularised solutions compared to~\eqref{eq:var_po}, i.e. the generalised discrepancy principle tends to overestimate the regularisation parameter. One of the reasons for this is the use of the triangle inequality to account for~\eqref{eq:A_h}, which makes the estimates not sharp, in general.



\revthree{The motivation for this paper is two-fold. First, we want to extend the approach~\eqref{eq:Al_Au},~\eqref{eq:var_po} to a broader class of fidelity terms than the characteristic function of a ball and more general data spaces than $L^\infty$. 
We also aim at a unified analysis of problems with fidelities that don't satisfy a triangle-type inequality, which is interesting in its own right. Our proofs mostly rely on convex analysis and duality. }



\paragraph{Setup.}
We consider the inverse problem~\eqref{eq:Au=f}, where $\X = \U^*$ and $\Y = \V^*$ are 
duals of Banach lattices $\U$ and $\V$, respectively. We assume that the partial order on $\Y$ is induced by the partial order in $\V$ as follows: $y \geq 0 \iff \sp{y,v} \geq 0$  $\forall v \in \V, \,\, v \geq 0$ (cf.  Lemma~\ref{lem:dual_order} in the appendix).

Furthermore, we assume that~\eqref{eq:Au=f} 
possesses a non-negative $\reg$-minimising solution $\Jminsol$, i.e.
\begin{equation}\label{eq:J_minimizing}
    A\Jminsol = \b f, \quad \Jminsol \geq 0 \quad \text{and} \quad \reg(\Jminsol) \leq \reg(u) \quad \text{for all $u$ such that $Au = \b f$.}
\end{equation}

We propose the following extension of~\eqref{eq:var_reg_1} to the case when the forward operator is known only up to the order interval given in~\eqref{eq:Al_Au}
\begin{equation}\label{eq:opt_prb}
    \min_{\substack{u \in \X \\ v \in \Y}} \frac{1}{\alpha} \fid(v \mid f_\delta) +  \reg(u) \quad \text{s.t. $A^l u \leq v \leq A^u u$},
\end{equation}
where $\reg \colon \X \to \RI_+$ and $\fid(\cdot \mid f) \colon \Y \to \RI_+$ (as a function of its first argument) are assumed proper, convex and weakly-* \lsc{} (cf. Assumption~\ref{ass:reg_fid}).


\paragraph{Main contribution.}
In this work we study convergence of solutions of~\eqref{eq:opt_prb} to a $\reg$-minimising solution of~\eqref{eq:Au=f} as the noise in data and operators decreases, and obtain convergence rates in one-sided Bregman distances with respect to $\reg$.
We also give conditions when~\eqref{eq:opt_prb} admits strong duality, in which case the convergence rates translate to symmetric Bregman distances.
Furthermore, we analyse an a-posteriori parameter choice rule based on a discrepancy principle for~\eqref{eq:opt_prb}.

Our results apply inter alia to general $\phi$-divergences, as for instance the Kullback-Leibler divergence, and coercive fidelities such as powers of norms or Wasserstein distances from optimal transport.
In addition, we also obtain rates for sums and infimal convolutions of different fidelities, as used for instance in mixed-noise removal. 
\revthree{Even for exact operators, our analysis goes beyond the state of the art in problems with fidelity terms that lack a triangle-type inequality.
}


\paragraph{Structure of the paper.}
In Section \ref{sec:primal_and_dual} we study existence of solutions of the problem~\eqref{eq:opt_prb} and its dual and establish sufficient conditions for strong duality. In Section~\ref{sec:conv_an} we derive convergence rates for a-priori parameter choice rules. In Section~\ref{sec:discr_pr} we formulate a discrepancy principle for the problem~\eqref{eq:opt_prb} and also obtain convergence rates. 
For readers' convenience, we present some background material on Banach lattices in the appendix.

\subsubsection*{Examples of regular operators} 
Below, we give some examples of regular operators and discuss how lower and upper bounds in the sense of~\eqref{eq:Al_Au}--\eqref{eq:po_oper} can be obtained.
\begin{example}
If $\Y$ is an abstract maximum space (a generalisation of $L^\infty$) or if $\X$ is an abstract Lebesgue space (a generalisation of $L^1$) then all linear bounded operators are regular, i.e. they can be written as a difference of two positive operators. More details can be found in the appendix.
\end{example}
\begin{example}[Integral operators -- perturbations of the kernel] \label{ex:int-op-kernel} 
Let $A \colon L^p(\Omega) \to L^q(\Omega)$ ($\Omega \subset \R^d$ bounded, $p,q \geq 1$) be an integral operator with a $(p,q)$-bounded kernel $k$~\cite{schachermayer1981lp},
\begin{equation}\label{eq:int_op}
    Au(x) \defeq \int_\Omega k(x,\xi) u(\xi) \, \d\xi.
\end{equation}
The operator $A$ can be written as
\begin{equation*}
    A = A_+ - A_-, \quad A_\pm \defeq \int_\Omega k(x,\xi)_\pm u(\xi) \, \d\xi,
\end{equation*}
where $k_+$ and $k_-$ are the positive and the negative parts of $k$ (in the a.e. sense in $\Omega \times \Omega$). Clearly, $A_\pm$ are positive and $A$ is regular. \\
\indent Suppose that the kernel is corrupted by an unknown $(p,q)$-bounded perturbation such that we only know pointwise lower and upper bounds for $k$,
\begin{equation}\label{eq:kl_ku}
    k^l(x,\xi) \leq k(x,\xi) \leq k^u(x,\xi) \quad \text{a.e. in $\Omega \times \Omega$}.
\end{equation}
Then lower and upper operators in the sense of~\eqref{eq:Al_Au} are given by
\begin{equation*}
    A^l u(x) \defeq \int_\Omega k^l(x,\xi) u(\xi) \, \d\xi, \quad A^u u(x) \defeq \int_\Omega k^u(x,\xi) u(\xi) \, \d\xi.
\end{equation*}
It should be noted that the bounds~\eqref{eq:kl_ku} are of a deterministic nature. They could arise, for example, if the kernel depends on additional parameters $\theta \in \Theta$, i.e. $k(x,\xi) = k_\theta(x,\xi)$. If reconstructing the unknown parameter $\theta$ is not of independent interest, the dependence on it can be eliminated by defining
\begin{equation*}
    k^l(x,\xi) \defeq \inf_{\theta \in \Theta} k_\theta(x,\xi), \quad k^u(x,\xi) \defeq \sup_{\theta \in \Theta} k_\theta(x,\xi),
\end{equation*}
provided the suprema and infima are finite for a.e. $x,\xi$ and $k^{l,u}$ are $(p,q)$-bounded.
\end{example}
\begin{example}[Integral operators -- discretisation]
Let the operator $A$ be as defined in Example~\ref{ex:int-op-kernel} on an interval $\Omega\subset\R$ and consider its approximation by Riemann sums. In particular, let $S^l_n(x)$ and $S^u_n(x)$ denote the lower and upper Riemann sums in~\eqref{eq:int_op} obtained using an $n$-point discretisation. Then these sums define lower and upper operators in the sense of~\eqref{eq:Al_Au},
\begin{equation*}
    A^l_n u(x) \defeq S^l_n(x), \quad  A^u_n u(x) \defeq S^u_n(x).
\end{equation*}
As we refine the discretisation (i.e. $n \to \infty$), these bounds converge pointwise to $Au(x)$.
\end{example}
\begin{example}[Integration with respect to a vector-valued measure]\label{ex:vector-valued-measures}
Example~\ref{ex:int-op-kernel} can be generalised as follows. Let $\mu \in \M(\Omega;Y)$ be a vector-valued Radon measure~\cite{diestel_uhl:1977}, where $\Omega$ is a compact metric space and $Y$ is a Banach lattice with the Radon-Nikod\'ym property. Define partial order on $\M(\Omega;Y)$ as follows
\begin{equation}\label{eq:po-vector-measures}
    \mu \geq_{\M} 0 \iff \mu(E) \geq_Y 0 \quad \text{for all $\mu$-measurable subsets $E \subset \Omega$}.
\end{equation}
Let $A \colon \C(\Omega) \to Y$ be defined as follows
\begin{equation*}
    Au \defeq \int_\Omega u \, \d\mu.
\end{equation*}
Since $Y$ is a lattice, it is clear that $A$ is regular. Lower and upper bounds $\mu^l \leq_{\M} \mu \leq_{\M} \mu^u$ in the sense of~\eqref{eq:po-vector-measures} define lower and upper operators $A^{l,u}$ in the sense of~\eqref{eq:Al_Au}.
\end{example}
\begin{example}[1D source identification]
We consider the operator $A:\M([0,1]) \to \C([0,1])$, $A:u\mapsto\phi$, where $\phi$ solves
\begin{equation*}
    \begin{cases}
    -(a\phi')' &= u, \quad $on $(0,1),\\
    \phi(0) &= 0,\\
    \phi'(0) &=0.
    \end{cases}
\end{equation*}
Here $a:[0,1]\to\R$ is a continuous function which meets $a\geq a_0>0$ on $[0,1]$ and $u\in\M([0,1])$ is a Radon measure with integrable antiderivative $U(x):=\int_0^x \d u$. Integrating the equation yields
\begin{equation*}
        Au(x) = \phi(x) = -\int_0^x \frac{U(y)}{a(y)} \d y.
\end{equation*}
Clearly, $A \leq 0$ and hence regular. Hence, if $\underline{a},\overline{a}:[0,1]\to\R$ are continuous functions such that $\underline{a} \leq a \leq \overline{a}$ on $[0,1]$ and $\underline{a}\geq \underline{a}_0>0$ on $[0,1]$, we can define operators
\begin{eqnarray*}
    A^l u(x) &=& -\int_0^x \frac{U(y)}{\underline{a}(y)} \d y, \\
    A^u u(x) &=& -\int_0^x \frac{U(y)}{\overline{a}(y)} \d y,
\end{eqnarray*}
which meet $A^l u \leq Au \leq A^u u$ for $u\geq 0$ (and hence $U \geq 0$). If $\norm{\overline{a}-\underline{a}}_\C \to 0$, then $A^{l,u}$ converge to $A$ in the operator norm. \\
\indent If one defines the operator $A$ on $L^1((0,1))$ instead of $\M([0,1])$, the antiderivative $U$ is continuous and one can approximate the integrals in $A^l$ and $A^u$ with lower and upper Riemann sums, respectively. 
This gives rise to operators $A^l_n$ and $A^u_n$ such that $A^l_n\leq A^l \leq A \leq A^u \leq A^u_n$. If then additionally $n\to \infty$, the operators $A^{l,u}_n$ converge to $A$. \\
\indent Note that a similar approach can be used for estimating the diffusivity $a$ for a given source term. 
In this case, however, the forward operator $A$ becomes non-linear. This would require an extension of our theory.
\end{example}
\begin{example}[Conditional expectations]
Let $\Omega$ be a separable metric space and $(\Omega,\Sigma,\mu)$ be a probability space.  
Let $B \subset \Sigma$ be a sub-$\sigma$-algebra of $\Sigma$ and let $\{E_i\}_{i=1}^\infty$ be its minimal generator (which exists, since $\Omega$ is separable). The \emph{conditional expectation operator} $A \colon L^p_\mu(\Omega) \to L^p_\mu(\Omega)$ is defined as follows
\begin{equation*}
    Au \defeq \sum_{i=1}^\infty \frac{\int_{E_i} u \, \d\mu}{\mu(E_i)} \chi_{E_i},
\end{equation*}
under the convention $0/0=0$. Clearly, $A \geq 0$ and hence regular. \\
\indent If we allow $\mu$ to be a finite signed measure, then we can generalise the definition as follows
\begin{equation*}
    Au \defeq \sum_{i=1}^\infty \frac{\int_{E_i} u \, \d\mu}{\abs{\mu}(E_i)} \chi_{E_i},
\end{equation*}
where $\abs{\mu}$ is the total variation of $\mu$. Clearly,
\begin{equation*}
    A = A_+ - A_-, \quad A_\pm u \defeq \sum_{i=1}^\infty \frac{\int_{E_i} u \, \d\mu_\pm}{\abs{\mu}(E_i)} \chi_{E_i}
\end{equation*}
and $A_\pm \geq 0$, hence $A$ is regular.
In contrast to Example~\ref{ex:vector-valued-measures}, partial order bounds on $\mu$ in the sense of~\eqref{eq:po-vector-measures} do not translate into lower and upper bounds~\eqref{eq:po_oper} for $A$ since $A$ is not an integral operator (in particular, it is not linear in~$\mu$).
\end{example}

\section{Primal and Dual Problems}
In this section we establish existence of solutions to~\eqref{eq:opt_prb} using the direct method, where standard assumptions on the forward operators, the regularisation, and fidelity function will guarantee coercivity and lower semicontinuity.
Subsequently, we derive the dual maximisation problem and prove existence and strong duality under the additional assumption that the data space $\Y$ is an abstract maximum space.

\label{sec:primal_and_dual}
\subsection{Existence of a primal solution}
We make the following standard assumptions on the regularisation functional $\reg$, the fidelity function $\fid$, and the operators $A^{l,u}$.
\begin{assumption}\label{ass:reg_fid}
The regularisation functional $\reg \colon \X \to \RI_+$ is
\begin{itemize}
    \item proper, convex and weakly-* \lsc{};
    \item its non-empty sublevel sets $\{u \in \X \colon \reg(u) \leq C\}$ are weakly-* sequentially compact.
\end{itemize}
The fidelity function $\fid(\cdot \mid \cdot) \colon \Y \times \Y \to \RI_+$ is
\begin{itemize}
    \item proper, convex in its first argument and weakly-* \lsc{} jointly in both arguments;
    \item $\fid(v \mid f) = 0$ if and only if $v=f$.
\end{itemize}
\end{assumption}

\begin{assumption}\label{ass:op}
The operators $A,A^{l,u}:\X\to\Y$ are weak-* to weak-* continuous.
\end{assumption}

A sufficient condition for Assumption~\ref{ass:op} to hold is given in Lemma~\ref{lem:weak-star_cont} in the appendix.

\begin{theorem}\label{thm:primal_ex}
Suppose that Assumptions~\ref{ass:reg_fid} and~\ref{ass:op} hold true. Then~\eqref{eq:opt_prb} has a solution.
\end{theorem}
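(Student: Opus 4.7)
The plan is to apply the direct method of the calculus of variations. First I would verify that the problem is not vacuous: since $\Jminsol$ exists, is non-negative, and satisfies $A\Jminsol=\bar f$, the partial order property $A^l\Jminsol\leq A\Jminsol\leq A^u\Jminsol$ together with~\eqref{ineq:noise_level} shows that the pair $(\Jminsol,\bar f)$ is feasible with finite cost. As the objective is nonnegative, the infimum is a finite number $m\geq 0$, and I can pick a minimising sequence $(u_n,v_n)$.

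Next I would extract a weak-$*$ convergent subsequence. For $u_n$, eventual boundedness of $\reg(u_n)$ together with the weak-$*$ sequential compactness of the sublevel sets of $\reg$ in Assumption~\ref{ass:reg_fid} yields (after passing to a subsequence) $u_n \wsto u^\star$. To control $v_n$, I would exploit the sandwich constraint: from $A^l u_n\leq v_n\leq A^u u_n$ I get $0\leq v_n-A^l u_n\leq (A^u-A^l)u_n$, so by the monotonicity of the norm in a Banach lattice, $\|v_n-A^l u_n\|\leq \|(A^u-A^l)u_n\|$. Since $A^l,A^u$ are weak-$*$ continuous, hence norm bounded, and $u_n$ is norm bounded (as $\X=\U^*$ and $(u_n)$ is weakly-$*$ convergent, by the uniform boundedness principle), it follows that $v_n$ is bounded in $\Y=\V^*$. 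Banach--Alaoglu then supplies a further subsequence with $v_n\wsto v^\star$.

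I then pass to the limit in both the cost and the constraints. The cost bound $\liminf_n \bigl[\tfrac{1}{\alpha}\fid(v_n\mid f_\delta)+\reg(u_n)\bigr]\geq \tfrac{1}{\alpha}\fid(v^\star\mid f_\delta)+\reg(u^\star)$ follows from the weak-$*$ lower semicontinuity of $\reg$ and of $\fid(\cdot\mid f_\delta)$ (the latter obtained by freezing the second argument in the joint weak-$*$ lower semicontinuity of~$\fid$). For the constraint, weak-$*$ continuity of $A^l$ and $A^u$ gives $A^l u_n\wsto A^l u^\star$ and $A^u u_n\wsto A^u u^\star$, so I only need to know that the positive cone of $\Y$ is weakly-$*$ closed; but by the assumption on the order in $\Y$, this cone equals $\bigcap_{v\in\V,\,v\geq 0}\{y\in\Y\st\sp{y,v}\geq 0\}$, an intersection of weak-$*$ closed half-spaces, so it is indeed weak-$*$ closed. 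Consequently $A^l u^\star\leq v^\star\leq A^u u^\star$, and $(u^\star,v^\star)$ is a minimiser.

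The main technical point I anticipate is the coercivity of $v_n$, since $\fid$ itself is not assumed to be coercive: the argument must route through the lattice sandwich and the monotonicity of the lattice norm rather than through the fidelity term. Everything else is a routine direct-method argument, with weak-$*$ closedness of the order cone being the only place where the dual-lattice setting enters.
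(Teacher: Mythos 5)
Your proof is correct and follows essentially the same route as the paper's: extract a weakly-$*$ convergent subsequence of $u_n$ from the sublevel-set compactness of $\reg$, bound $v_n$ via the lattice sandwich $0 \leq v_n - A^l u_n \leq (A^u - A^l)u_n$ and the monotonicity of the lattice norm, then pass to the limit using weak-$*$ closedness of the order cone (the content of Lemma~\ref{lem:dual_order}) and weak-$*$ lower semicontinuity. The only point worth flagging is that extracting a weak-$*$ convergent subsequence of $v_n$ requires the \emph{sequential} Banach--Alaoglu theorem, for which the paper invokes separability of the predual $\V$.
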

\begin{proof}
Consider a minimising sequence $(u_k,v_k)$. Due to Assumption~\ref{ass:reg_fid} there exists a convergent subsequence $u_k$ (that we don't relabel) such that
\begin{equation*}
    u_k \wsto u_\infty.
\end{equation*}
Then Assumption~\ref{ass:op} yields
\begin{equation*}
    A^{l,u} u_k \wsto A^{l,u} u_\infty.
\end{equation*}
From~\eqref{eq:opt_prb} we get that for all $k$
\begin{equation*}
    0 \leq v_k - A^l u_k \leq (A^u-A^l) u_k,
\end{equation*}
hence
\begin{equation*}
    \norm{v_k - A^l u_k} \leq \norm{(A^u-A^l)u_k}
\end{equation*}
and
\begin{equation*}
    \norm{v_k} \leq \norm{A^l u_k} + \norm{(A^u-A^l)u_k} \leq C,
\end{equation*}
since weakly-* convergent sequences are bounded.

Since $\Y$ is a dual of a separable Banach space $\V$, by the sequential Banach-Alaoglu theorem the sequence $v^k$ contains a weakly-* convergent subsequence $v_k$ (that we don't relabel) such that
\begin{equation*}
    v_k \wsto v_\infty.
\end{equation*}
Since both $A^{l,u}u_k$ and $v_k$ converge weakly-* and order intervals in $\Y$ are weakly-* closed due to Lemma~\ref{lem:dual_order}, we obtain that
\begin{equation*}
    A^l u_\infty \leq v_\infty \leq A^u u_\infty.
\end{equation*}
Hence $(u_\infty,v_\infty)$ is feasible for \eqref{eq:opt_prb}.
Furthermore, since $\reg(\cdot)$ and $\fid(\cdot \mid f)$ are weakly-* \lsc{}, we get that 
\begin{equation*}
    \frac1\alpha\fid(v_\infty \mid f) + \reg(u_\infty) \leq \liminf_{k \to \infty} \frac1\alpha\fid(v_k \mid f) + \reg(u_k) = \inf_{\substack{u \in \X \\ v \in \Y \\ A^l u \leq v \leq A^u u}} \frac1\alpha\fid(v \mid f) +  \reg(u). 
\end{equation*}
Therefore, $(u_\infty, v_\infty)$ is a solution of~\eqref{eq:opt_prb}.
\end{proof}


\subsection{Dual problem}
To simplify our notation, we introduce an operator $B \colon \X \to \Y \times \Y$
\begin{equation}\label{eq:def_B}
    Bu = 
    \begin{pmatrix}
    A^l u \\
    -A^u u
    \end{pmatrix}
\end{equation}
and an operator $E \colon \Y \to \Y \times \Y$
\begin{equation}\label{eq:def_E}
    Ev = 
    \begin{pmatrix}
    v \\
    -v
    \end{pmatrix}.
\end{equation}
With this notation we can rewrite~\eqref{eq:opt_prb} as follows
\begin{equation}\label{eq:primal}
    \min_{\substack{u \in \X \\ v \in \Y \\ Bu \leq Ev}} \frac{1}{\alpha}\fid(v \mid f) + \reg(u).
\end{equation}


\begin{proposition}
The (Lagrangian) dual problem of~\eqref{eq:primal} is given by
\begin{equation}\label{eq:dual}
    \sup_{\substack{\mu \in \Y^* \times \Y^* \\ \mu \geq 0}} -\frac1\alpha\fid^*(\alpha \adj{E}\mu \mid f) - \reg^*(-\adj{B} \mu).
\end{equation}
\end{proposition}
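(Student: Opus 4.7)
The plan is to carry out the standard Lagrangian duality calculation for the inequality-constrained problem~\eqref{eq:primal}, taking care to handle the order constraint $Bu \leq Ev$ by a non-negative dual variable in $\Y^*\times\Y^*$, and then to compute the inner infima in closed form using Fenchel conjugates of $\reg$ and $\fid(\cdot\mid f)$.

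First, I would introduce the Lagrangian associated to the constraint $Bu-Ev\leq 0$ in $\Y\times\Y$. Since the partial order on $\Y$ is the one induced by $\V$, the corresponding cone of admissible multipliers in $\Y^*\times\Y^*$ is the positive cone $\{\mu\geq 0\}$, so that $\sp{\mu, Bu-Ev}\leq 0$ for every feasible pair $(u,v)$ and $\mu\geq 0$. With this convention, the characteristic function of the feasible set can be written as
\begin{equation*}
    \charf_{\{Bu\leq Ev\}}(u,v) = \sup_{\mu\in\Y^*\times\Y^*,\,\mu\geq 0}\sp{\mu,Bu-Ev},
\end{equation*}
so that \eqref{eq:primal} is equivalent to
\begin{equation*}
    \inf_{u\in\X,\,v\in\Y}\sup_{\mu\geq 0} L(u,v,\mu), \qquad
    L(u,v,\mu) \defeq \frac1\alpha\fid(v\mid f)+\reg(u)+\sp{\mu,Bu-Ev}.
\end{equation*}

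Next I would exchange $\inf$ and $\sup$ formally to obtain the Lagrangian dual
\begin{equation*}
    \sup_{\mu\geq 0}\,\inf_{u\in\X,\,v\in\Y} L(u,v,\mu),
\end{equation*}
and decouple the inner infimum in $u$ and $v$. Using the adjoints $\adj{B},\adj{E}$ (which exist because $B$ and $E$ are bounded and linear by Assumption~\ref{ass:op}), I rewrite $\sp{\mu,Bu}=\sp{\adj{B}\mu,u}$ and $\sp{\mu,Ev}=\sp{\adj{E}\mu,v}$. The $u$-infimum gives
\begin{equation*}
    \inf_{u\in\X}\bigl\{\reg(u)+\sp{\adj{B}\mu,u}\bigr\} = -\sup_{u\in\X}\bigl\{\sp{-\adj{B}\mu,u}-\reg(u)\bigr\} = -\reg^*(-\adj{B}\mu),
\end{equation*}
which is just the definition of the Fenchel conjugate. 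For the $v$-infimum, I would pull $\tfrac1\alpha$ out,
\begin{equation*}
    \inf_{v\in\Y}\Bigl\{\tfrac1\alpha\fid(v\mid f)-\sp{\adj{E}\mu,v}\Bigr\} = -\tfrac1\alpha\sup_{v\in\Y}\bigl\{\sp{\alpha\adj{E}\mu,v}-\fid(v\mid f)\bigr\} = -\tfrac1\alpha\fid^*(\alpha\adj{E}\mu\mid f),
\end{equation*}
where $\fid^*(\cdot\mid f)$ denotes the Fenchel conjugate of $\fid(\cdot\mid f)$ in its first argument. Adding the two contributions yields the claimed dual~\eqref{eq:dual}.

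The only slightly subtle point, and what I expect to be the main technical nuisance rather than a real obstacle, is justifying the sign convention for $\mu$: one must check that the order dual of the positive cone $\{y\geq 0\}\subset\Y\times\Y$ is precisely the positive cone in $\Y^*\times\Y^*$ induced from $\V\times\V$, so that the Lagrangian representation of the indicator function is valid. This is a direct consequence of the definition of the dual order together with Lemma~\ref{lem:dual_order} applied componentwise. Once this is in place, the derivation above is a textbook Fenchel--Rockafellar computation and no further regularity assumption (such as a constraint qualification) is needed, since we only assert that~\eqref{eq:dual} is the Lagrangian dual of~\eqref{eq:primal}, not that strong duality holds.
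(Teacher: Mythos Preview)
Your proposal is correct and follows essentially the same approach as the paper: form the Lagrangian with a nonnegative multiplier $\mu$ for the constraint $Bu\leq Ev$, separate the infimum in $u$ and $v$, and recognise the two Fenchel conjugates. Your additional remarks on the dual cone and on not claiming strong duality are accurate but not needed for the paper's version, which simply writes down the Lagrangian and computes $\inf_{u,v}\mathcal L$.
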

\begin{proof}
The Lagrangian of~\eqref{eq:primal} is given by
\begin{equation*}\label{eq:lagrangian}
    \mathcal L(u,v,\mu) = \frac1\alpha\fid(v \mid f) + \reg(u) + \sp{\mu,Bu-Ev},
\end{equation*}
where $\mu \in \Y^* \times \Y^*$, $\mu \geq 0$. Minimising the Lagrangian in $u$ and $v$, we obtain
\begin{eqnarray*}
    \inf_{u,v} \mathcal L(u,v,\mu) &=& \inf_{u,v} \frac1\alpha\fid(v \mid f) + \reg(u) + \sp{\mu,Bu-Ev} \\
    &=&  \inf_u \left[\reg(u) - \sp{-\adj{B}\mu,u} \right] + \frac1\alpha\inf_v \left[ \fid(v \mid f) - \sp{\alpha\adj{E}\mu,v} \right] \\
    &=& -\reg^*(-\adj{B} \mu) - \frac1\alpha\fid^*(\alpha \adj{E}\mu \mid f).
\end{eqnarray*}
Taking a supremum over $\mu \geq 0$ gives~\eqref{eq:dual}.
\end{proof}

It is well known (e.g.,~\cite{Bonnans_Shapiro_book:2000}) that
\begin{equation*}
    \inf_{\substack{u \in \X \\ v \in \Y \\ Bu \leq Ev}} \frac{1}{\alpha}\fid(v \mid f) + \reg(u) 
    \geq \sup_{\substack{\mu \in \Y^* \times \Y^* \\ \mu \geq 0}} -\frac1\alpha\fid^*(\alpha \adj{E}\mu \mid f) - \reg^*(-\adj{B} \mu),
\end{equation*}
which is referred to as \emph{weak duality}.

\begin{remark}\label{rem: fid_difference}
If the fidelity function depends only on the difference of its arguments, i.e. $\fid(\cdot \mid f) =  h(\cdot - f)$, then
\begin{equation*}
    \fid^*(\alpha \adj{E}\mu \mid f) =  h^*(\alpha \adj{E}\mu) + (\alpha\adj{E}\mu,f)
\end{equation*}
and problem~\eqref{eq:dual} becomes
\begin{equation}\label{eq:dual_diff}
    \sup_{\substack{\mu \geq 0 
    }
    } -\frac1\alpha h^*(\alpha \adj{E}\mu) - (\alpha\adj{E}\mu,f) - \reg^*(-\adj{B} \mu).
\end{equation}
If $h(\cdot) = \frac{1}{2}\norm{\cdot}_\Y^2$, we have $h^*(\cdot) = \frac{1}{2}\norm{\cdot}_{\Y^*}^2$ and hence we obtain the standard form (e.g.,~\cite{Peyre:2017})
\begin{equation*}
    \sup_{\substack{\mu \geq 0
    }} -\frac{\alpha}{2} \norm{\adj{E}\mu}_{\Y^*}^2 - (\adj{E}\mu,f) -\reg^*(-\adj{B} \mu) = - \inf_{\substack{\mu \geq 0 
    }} \frac{\alpha}{2} \norm{\adj{E}\mu}_{\Y^*}^2 + (\adj{E}\mu,f) +\reg^*(-\adj{B} \mu).
\end{equation*}
\end{remark}

\subsection{Existence of a dual solution and strong duality}
The goal of this section is to study the relationship between the primal problem~\eqref{eq:primal} and its dual~\eqref{eq:dual}, establishing strong duality and existence of a dual solution, and obtaining complementarity conditions for Lagrange multipliers associated with constraints in~\eqref{eq:primal}.

We will need the following result from~\cite[Theorem 2.165]{Bonnans_Shapiro_book:2000}.
\begin{theorem}[\cite{Bonnans_Shapiro_book:2000}] \label{thm:bonnans1}
Consider the following optimisation problem
\begin{equation}\label{thm:primal_prob}
\inf_{x \in X} g(x) \quad \text{s.t. $L x \in K$}, \tag{$\mathcal P$}
\end{equation}
and its dual
\begin{equation}\label{thm:dual_prob}
\sup_{y^* \in Y^*} - \charf_K^*(y^*)- g^*(-\adj{L}y^*), \tag{$\mathcal D$}
\end{equation}
where $X$ and $Y$ are Banach spaces, $L \colon X \to Y$ is a linear bounded operator, $\adj{L}$ its adjoint, $K \subset Y$ a closed convex set, and $g \colon X \to \R$ a proper convex \lsc{} function with convex conjugate $g^* \colon X^* \to \R$. The characteristic function of $K$ is denoted by $\charf_K(\cdot)$  and its convex conjugate (i.e. the support function of $K$) by $\charf_K^*(\cdot)$. Suppose that the following regularity condition is satisfied
\begin{equation}\label{eq:Robinson_reg}
    0 \in \interior(L (\dom g) - K).
\end{equation}
Then there is no duality gap between problems~\eqref{thm:primal_prob} and~\eqref{thm:dual_prob}. If the optimal value of~\eqref{thm:primal_prob} is finite, then the dual problem~\eqref{thm:dual_prob} has at least one solution $\b y^* \in Y^*$.
\end{theorem}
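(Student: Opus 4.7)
The plan is to use the Fenchel--Rockafellar perturbation framework. I would introduce the value function $\phi \colon Y \to \RI$ given by
\begin{equation*}
    \phi(y) \defeq \inf_{x \in X} \left\{ g(x) \st Lx - y \in K \right\},
\end{equation*}
so that $\phi(0)$ equals the primal optimum of~\eqref{thm:primal_prob}. A short direct computation yields $\phi^*(y^*) = \charf_K^*(-y^*) + g^*(\adj{L}y^*)$, and the sign change $y^* \mapsto -y^*$ identifies the dual~\eqref{thm:dual_prob} with $\phi^{**}(0)$. Both conclusions of the theorem then reduce to the single statement that $\partial\phi(0)$ is non-empty: any subgradient at $0$ produces (after sign reversal) a dual optimiser $\bar y^*$, and its existence automatically forces $\phi(0) = \phi^{**}(0)$, i.e.\ the absence of a duality gap.

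I would then verify that $\phi$ is convex (from convexity of $g$ and $K$ together with linearity of $L$), proper (since $\phi(0)$ is finite by assumption and a convex function cannot take the value $-\infty$ at an interior point of its domain without being identically $-\infty$ there), and satisfies $\dom\phi = L(\dom g) - K$, so that the Robinson regularity condition~\eqref{eq:Robinson_reg} is exactly $0 \in \interior(\dom\phi)$. The core ingredient is the classical fact that a proper convex function on a Banach space which is bounded above on a neighbourhood of an interior point of its domain is continuous, and in particular subdifferentiable, at that point.

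To supply the missing local boundedness of $\phi$ near the origin I would apply the Robinson--Ursescu open mapping theorem to the convex multifunction $x \mapsto Lx - K$: under~\eqref{eq:Robinson_reg} this multifunction is metrically regular at any $x_0$ with $Lx_0 \in K$, so every $y$ in a small ball around $0$ admits a feasible selection $x_y$ lying in a bounded neighbourhood of $x_0$ inside $\dom g$, on which $g$ is uniformly bounded above by local boundedness of proper convex functions on the interior of their domain. This upgrade from bare interiority of $\dom\phi$ to local boundedness of $\phi$ itself is the main obstacle, since the set-theoretic interiority alone does not force continuity in infinite dimensions; one really needs the quantitative open mapping statement to produce the selections $x_y$. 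Once continuity of $\phi$ at $0$ is established, Hahn--Banach separation of the epigraph of $\phi$ from a point strictly below $(0, \phi(0))$ produces a continuous affine minorant tangent at the origin, whose slope is the desired subgradient and, up to the sign reversal, the dual optimiser $\bar y^* \in Y^*$, completing the argument.
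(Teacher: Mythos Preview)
The paper does not prove this theorem: it is quoted as Theorem~2.165 from \cite{Bonnans_Shapiro_book:2000} and used as a black box, so there is no ``paper's own proof'' to compare against.

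Your perturbation/value-function approach is the standard route and is essentially how the result is obtained in the cited reference. One technical point in your outline deserves attention. You propose to apply Robinson--Ursescu to the closed convex multifunction $x \mapsto Lx - K$; this yields, for $y$ near $0$, selections $x_y$ close to a fixed feasible $x_0$ in $X$. However, nothing forces these $x_y$ to lie in $\dom g$, and your subsequent appeal to ``local boundedness of proper convex functions on the interior of their domain'' presupposes $\interior(\dom g) \neq \emptyset$, which is nowhere assumed. The usual remedy is to apply Robinson--Ursescu to the closed convex multifunction
\[
(x,t) \;\longmapsto\;
\begin{cases}
Lx - K, & (x,t) \in \operatorname{epi} g,\\
\emptyset, & \text{otherwise},
\end{cases}
\]
whose graph is closed (by lower semicontinuity of $g$ and closedness of $K$) and whose range is exactly $L(\dom g) - K$. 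Openness at $(x_0, g(x_0), 0)$ then produces, for each $y$ in a ball, a pair $(x_y, t_y)$ with $g(x_y) \leq t_y$ and $t_y$ close to $g(x_0)$, which is precisely the local upper bound on $\phi$ you need.

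A minor secondary point: the theorem asserts absence of a duality gap under~\eqref{eq:Robinson_reg} alone, while dual attainment requires in addition that the primal value be finite. Your sketch tacitly assumes finiteness of $\phi(0)$ throughout; the case $\phi(0) = -\infty$ (no duality gap then meaning the dual value is also $-\infty$) needs a separate one-line argument.
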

The regularity condition~\eqref{eq:Robinson_reg} is due to Robinson~\cite{Robinson_lin} and plays an important role in the stability of optimisation problems under perturbations of the feasible set~\cite{Bonnans_Shapiro_book:2000}.

To ensure that~\eqref{eq:Robinson_reg} is satisfied in the primal problem~\eqref{eq:primal}, we will need to assume that the positive cone in $\Y$ has a non-empty interior. 
This naturally leads to the concept of abstract maximum spaces~\cite{Meyer-Nieberg} which are a generalisation of $L^\infty(\Omega)$.

\begin{definition}\label{def:AM-space}
A Banach lattice $\Y$ with norm $\norm{\cdot}$ is called an \emph{AM-space} (abstract maximum space) if
\begin{equation*}
    \norm{x \vee y}=\norm{x} \vee \norm{y},\quad \forall x,y \geq 0.
\end{equation*}
An element $\one \in \Y$ which meets
\begin{equation*}
    \one \geq 0,\quad \norm{\one} = 1,\quad \norm{x}\leq 1 \implies \abs{x} \leq \one,
\end{equation*}
is called \emph{unit} of $\Y$.
Here $x \vee y$ and $\abs{x}$ denote the usual supremum and absolute value of elements in a Banach lattice (cf.~appendix).
\end{definition}

\begin{theorem}\label{thm:robinson_primal}
Let  $\Y$ be an AM-space with unit $\one$ and suppose that there exist $u_0 \in \dom(\reg)$ and $v_0 \in \dom(\fid(\cdot \mid f))$ such that
\begin{equation*}
    A^l u_0 + \eps \one \leq v_0 \leq A^u u_0 - \eps \one,
\end{equation*}
where $\eps >0$ is a constant.
Then Robinson's condition~\eqref{eq:Robinson_reg} is satisfied in the primal problem.
\end{theorem}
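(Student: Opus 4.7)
The plan is to verify Robinson's condition $0 \in \interior(L(\dom g) - K)$ directly, by exhibiting an explicit open ball around $0$ inside $L(\dom g)-K$. In the notation of Theorem~\ref{thm:bonnans1} applied to~\eqref{eq:primal}, the ambient space is $\X \times \Y$, the objective is $g(u,v) := \tfrac{1}{\alpha} \fid(v\mid f) + \reg(u)$, the linear map is $L(u,v):=Bu-Ev$ into $\Y \times \Y$, and the cone is $K := \{z \in \Y \times \Y : z \leq 0\}$, so that the constraint $Bu\leq Ev$ reads $L(u,v)\in K$. Since $g$ is separable, $\dom g = \dom\reg \times \dom\fid(\cdot \mid f)$, and the hypothesis furnishes a point $(u_0,v_0) \in \dom g$.

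Two ingredients then drive the argument. First, the hypothesis translates directly into a strict feasibility statement for $(u_0,v_0)$:
\begin{equation*}
L(u_0,v_0)=\begin{pmatrix}A^l u_0 - v_0\\-A^u u_0+v_0\end{pmatrix}\leq \begin{pmatrix}-\eps\one\\-\eps\one\end{pmatrix}.
\end{equation*}
Second, the AM-space structure with unit $\one$ provides the bridge between norm and order on $\Y$: the defining implication $\norm{y}\leq 1 \Rightarrow \abs{y}\leq\one$, together with the positive homogeneity of both $\norm{\cdot}$ and the lattice absolute value (applied to $y/\eps$), yields $\norm{y} \leq \eps \Rightarrow \abs{y} \leq \eps\one$, and in particular $y \geq -\eps\one$.

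These two facts combine immediately. For any $(y_1,y_2)\in\Y\times\Y$ with $\max(\norm{y_1},\norm{y_2}) \leq \eps$, set $k := L(u_0,v_0) - (y_1,y_2)$; then each component satisfies $k_i \leq -\eps\one - y_i \leq 0$, so $k \in K$, and $(y_1,y_2) = L(u_0,v_0)-k \in L(\dom g)-K$. Thus the closed $\eps$-ball of $\Y\times\Y$ in the max-norm lies in $L(\dom g)-K$, which establishes Robinson's condition. No step is genuinely difficult; the only mildly subtle ingredient is the passage from norm to order through the unit, which is essentially the defining property of AM-spaces, and the rest is a straightforward rearrangement of the hypothesis.
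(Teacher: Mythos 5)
Your proposal is correct and follows essentially the same route as the paper's proof: identify $L=(B,-E)$ and $K=\Y_-\times\Y_-$, use the AM-unit to convert the norm bound $\norm{y}\leq\eps$ into the order bound $-\eps\one\leq y\leq\eps\one$, and absorb the discrepancy into a nonpositive $z\in K$ via the strictly feasible point $(u_0,v_0)$. The only cosmetic difference is that you make explicit the homogeneity step relating the unit ball to the $\eps$-ball, which the paper leaves implicit.
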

\begin{proof}
In the notation of Theorem~\ref{thm:bonnans1}, we have $X = \X \times \Y$, $g(u,v) \defeq \frac{1}{\alpha} \fid(v \mid f) + \reg(u)$, $L \defeq (B, -E)$ and $K = \Y_- \times \Y_-$ (where $\Y_-$ denotes the negative cone in $\Y$). 

Take an arbitrary $y = (y^1,y^2) \in \Y \times \Y$ with $\norm{y} \leq \eps$.
Without loss of generality we can choose the norm on $\Y \times \Y$ to be $\norm{y}=\max(\norm{y^1},\norm{y^2})$.
Hence, the definition of the unit implies
\begin{equation*}
    -\eps \one \leq y^{1,2} \leq \eps \one.
\end{equation*}
To show Robinson regularity, we need to write $y$ as 
\begin{equation}\label{eq:robinson_primal_y_represent}
    y = Bu - Ev - z
\end{equation}
for some $u \in \dom(\reg)$, $v \in \dom(\fid(\cdot \mid f))$ and $z =(z^1,z^2) \in \Y \times \Y$, $z^{1,2} \leq 0$. Writing this in terms of $A^l$ and $A^u$, we get
\begin{equation*}
    y^1 = A^l u - v - z^1, \quad y^2 = v - A^u u - z^2.
\end{equation*}
Take $u=u_0$ and $v=v_0$. Then
\begin{eqnarray*}
    z^1 &=& A^l u_0 - v_0 - y^1 \leq -\eps \one - y^1 \leq 0, \\ 
    z^2 &=& v_0 - A^u u_0 - y^2 \leq -\eps \one - y^2 \leq 0,
\end{eqnarray*}
and we can take $z^{1,2}$ as above to represent $y$ as in~\eqref{eq:robinson_primal_y_represent}. Hence, the Robinson condition~\eqref{eq:Robinson_reg} is satisfied.
\end{proof}
\begin{corollary}
Since the optimal value of the primal problem~\eqref{eq:primal} is finite, using Theorem~\ref{thm:bonnans1} we conclude that there exists a solution $\mu$ of the dual problem~\eqref{eq:dual} and there is no duality gap, i.e.
\begin{equation}\label{eq:no_gap}
    \frac{1}{\alpha}\fid(v \mid f) + \reg(u) = -\frac1\alpha\fid^*(\alpha \adj{E}\mu \mid f) -\reg^*(-\adj{B} \mu),
\end{equation}
where $(u,v)$ is a primal optimal solution. Moreover, from~\cite[Thm. 3.6]{Bonnans_Shapiro_book:2000} we conclude that $\mu$ is a Lagrange multiplier for the constraint $Bu \leq Ev$ in~\eqref{eq:primal} and the following complementarity condition holds
\begin{equation}\label{eq:complementarity}
    \sp{\mu, Bu-Ev} = 0.
\end{equation}
\end{corollary}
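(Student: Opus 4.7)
The plan is to apply Theorem~\ref{thm:bonnans1} directly. With the identifications $X = \X \times \Y$, $g(u,v) = \frac{1}{\alpha}\fid(v \mid f) + \reg(u)$, $L(u,v) = Bu - Ev$ and $K = \Y_- \times \Y_-$ used in the proof of Theorem~\ref{thm:robinson_primal}, I would first verify that the abstract dual~\eqref{thm:dual_prob} specialises to~\eqref{eq:dual}. The support function $\charf_K^*$ of the negative cone is the indicator of $\{\mu \in \Y^* \times \Y^* \st \mu \geq 0\}$, which accounts for the cone constraint; the adjoint reads $\adj{L}\mu = (\adj{B}\mu, -\adj{E}\mu)$; and since $g$ is separable in $(u,v)$, its conjugate splits as $g^*(-\adj{L}\mu) = \reg^*(-\adj{B}\mu) + \frac{1}{\alpha}\fid^*(\alpha\adj{E}\mu \mid f)$, matching~\eqref{eq:dual}.

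Theorem~\ref{thm:primal_ex} guarantees that~\eqref{eq:primal} is attained, so its optimal value is finite, while Theorem~\ref{thm:robinson_primal} supplies Robinson's condition~\eqref{eq:Robinson_reg}. Theorem~\ref{thm:bonnans1} then yields existence of a nonnegative dual solution $\mu$ together with the absence of a duality gap, which is precisely~\eqref{eq:no_gap}.

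For the complementarity condition~\eqref{eq:complementarity} I would proceed directly, combining~\eqref{eq:no_gap} with the Fenchel--Young inequality applied separately to $\reg$ and to $\fid(\cdot\mid f)$:
\begin{align*}
    \reg(u) + \reg^*(-\adj{B}\mu) &\geq \sp{-\adj{B}\mu,u} = -\sp{\mu, Bu}, \\
    \frac{1}{\alpha}\fid(v \mid f) + \frac{1}{\alpha}\fid^*(\alpha\adj{E}\mu \mid f) &\geq \sp{\adj{E}\mu, v} = \sp{\mu, Ev}.
\end{align*}
Summing these inequalities and using that the left-hand side vanishes by~\eqref{eq:no_gap} yields $\sp{\mu, Bu - Ev} \geq 0$. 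Since $\mu \geq 0$ and $Bu - Ev \leq 0$ by primal feasibility, the reverse inequality also holds, and~\eqref{eq:complementarity} follows.

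The substantive work is already contained in Theorem~\ref{thm:robinson_primal}; the only care point here is bookkeeping, namely tracking the signs in $L = (B,-E)$ through the adjoint and through the support function $\charf_K^*$ so that the dual cone emerges as $\mu \geq 0$ rather than $\mu \leq 0$.
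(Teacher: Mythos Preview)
Your proposal is correct. The paper does not give a separate proof for this corollary: the statement itself already contains the argument, invoking Theorem~\ref{thm:bonnans1} for existence of a dual solution and absence of a duality gap, and citing \cite[Thm.~3.6]{Bonnans_Shapiro_book:2000} for the Lagrange multiplier interpretation and the complementarity condition~\eqref{eq:complementarity}. Your treatment of the first part is identical in spirit, just more explicit about the bookkeeping (identifying $\charf_K^*$ with the indicator of the nonnegative cone and computing $g^*(-\adj{L}\mu)$).

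The only genuine difference is in the second part: rather than citing the external result, you derive~\eqref{eq:complementarity} directly from~\eqref{eq:no_gap} via two Fenchel--Young inequalities and primal feasibility. This is a more self-contained route and, incidentally, is precisely the chain of inequalities the paper itself employs in the proof of the next result, Theorem~\ref{thm:subgrads}. So what you gain is independence from the reference; what the paper's citation buys is brevity and the additional information that $\mu$ is a Lagrange multiplier in the sense of \cite{Bonnans_Shapiro_book:2000}, not merely that the complementarity relation holds.
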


\begin{theorem}\label{thm:subgrads}
Let $\mu$ be an optimal solution of~\eqref{eq:dual} and $(u,v)$ be an optimal solution of~\eqref{eq:primal}. Then under the assumptions of Theorem~\ref{thm:robinson_primal} we have the following relations
\begin{equation*}
    - \adj{B}\mu \in \dJ(u),\qquad \alpha \adj{E} \mu \in \subdiff \fid(v \mid f).
\end{equation*}
\end{theorem}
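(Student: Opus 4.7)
The plan is to deduce the subgradient inclusions directly from the zero-duality-gap identity \eqref{eq:no_gap} together with the complementarity condition \eqref{eq:complementarity}, using nothing more than the Fenchel--Young inequality applied separately to $\reg$ and to $\fid(\cdot \mid f)$.

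First I would rewrite \eqref{eq:no_gap} as
\begin{equation*}
    \reg(u) + \reg^*(-\adj{B}\mu) + \frac{1}{\alpha}\bigl[\fid(v \mid f) + \fid^*(\alpha \adj{E}\mu \mid f)\bigr] = 0.
\end{equation*}
By Fenchel--Young applied to $\reg$ and to $\fid(\cdot \mid f)$ (the latter viewed as a proper convex \lsc{} function of its first argument, thanks to Assumption~\ref{ass:reg_fid}), each bracket is bounded below:
\begin{equation*}
    \reg(u) + \reg^*(-\adj{B}\mu) \geq \sp{-\adj{B}\mu, u}, \qquad \fid(v \mid f) + \fid^*(\alpha \adj{E}\mu \mid f) \geq \sp{\alpha \adj{E}\mu, v},
\end{equation*}
with equality in the first (respectively second) inequality if and only if $-\adj{B}\mu \in \dJ(u)$ (respectively $\alpha \adj{E}\mu \in \subdiff \fid(v \mid f)$).

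Summing the two inequalities with the appropriate $1/\alpha$-weighting and invoking \eqref{eq:complementarity} in the form $\sp{\adj{B}\mu, u} = \sp{\adj{E}\mu, v}$, the right-hand sides combine to
\begin{equation*}
    \sp{-\adj{B}\mu, u} + \frac{1}{\alpha}\sp{\alpha \adj{E}\mu, v} = -\sp{\adj{E}\mu, v} + \sp{\adj{E}\mu, v} = 0.
\end{equation*}
Thus the sum of the two Fenchel--Young lower bounds equals zero, while their sum must also equal zero by the displayed rewriting of \eqref{eq:no_gap}. Since each Fenchel--Young inequality individually contributes a non-negative slack, both slacks must vanish, which yields the two claimed subgradient inclusions.

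There is no real obstacle here: all the heavy lifting---existence of the dual multiplier $\mu$, validity of \eqref{eq:no_gap}, and the complementarity identity \eqref{eq:complementarity}---was carried out in Theorem~\ref{thm:robinson_primal} and its corollary via Robinson's regularity condition. The only point warranting a brief comment is that we apply Fenchel--Young to $\fid(\cdot \mid f)$ in its first argument, which is legitimate because Assumption~\ref{ass:reg_fid} guarantees it is proper, convex, and weakly-* \lsc{}, so its biconjugate in that slot coincides with itself.
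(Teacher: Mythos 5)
Your proof is correct and takes essentially the same route as the paper: both rest on the Fenchel--Young inequality applied to $\reg$ and to $\fid(\cdot\mid f)$ together with the zero duality gap \eqref{eq:no_gap}, forcing both Fenchel--Young slacks to vanish. The only cosmetic difference is that you invoke the complementarity identity \eqref{eq:complementarity} directly, whereas the paper closes a cyclic chain of inequalities using only feasibility ($Bu\leq Ev$, $\mu\geq 0$) and recovers the same conclusion.
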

\begin{proof}
Using the Fenchel--Young inequality, strong duality~\eqref{eq:no_gap} and the feasibility of $(u,v)$,
we obtain
\begin{eqnarray*}
    \sp{-\adj{B}\mu, u} &\leq& \reg(u) + \adj{\reg}(-\adj{B}\mu) \\
    &=& -\frac{1}{\alpha}\adj{\fid}(\alpha\adj{E}\mu \mid f)-\frac{1}{\alpha}\fid(v\mid f) \\
    &=& -\frac{1}{\alpha}\left[\adj{\fid}(\alpha\adj{E}\mu \mid f) + \fid(v\mid f) \right] \\
    &\leq& -\frac{1}{\alpha} \sp{\alpha\adj{E}\mu, v} \\
    &=& -\sp{\mu, Ev} \\
    &\leq& -\sp{\mu, Bu} \\
    &=& \sp{-\adj{B}\mu, u}.
\end{eqnarray*}
Hence, equality holds everywhere and we get that $-\adj{B}\mu\in\partial\reg(u)$ and $\alpha\adj{E}\mu\in\partial\fid(v \mid f)$.

\end{proof}

\section{Convergence Analysis}\label{sec:conv_an}

Having investigated well-posedness of the primal and dual problems, we can now prove convergence rates of solutions as the noise in the data and the operator tends to zero.
To this end we consider sequences 
\begin{subequations}\label{eq:convergence_all}
\begin{eqnarray}
    A^l_n, A^u_n \colon && A^l_n \leq A \leq A^u_n \quad \forall n, \\
    && \norm{A^u_n - A^l_n} \leq \eta_n \to 0 \quad \text{as $n \to \infty$}, \label{eq:convergence_oper} \\
    f_n, \delta_n \colon && \fid(\b f \mid f_n) \leq \delta_n \quad \forall n, \label{eq: delta_n} \\
    && \delta_n \to 0 \quad \text{as $n \to \infty$}, \label{eq:convergence_data} \\
    \alpha_n \colon && \alpha_n \to 0 \quad \text{as $n \to \infty$},
\end{eqnarray}
\end{subequations}
and corresponding sequences $(u_n, v_n)$ and $\mu_n$ which solve problems~\eqref{eq:primal} and~\eqref{eq:dual}, respectively. We are interested in studying the behaviour of $(u_n, v_n)$ as $n \to \infty$ and would like to prove that $u_n$ converges to a $\reg$-minimizing solution~$\Jminsol$ (cf.~\eqref{eq:J_minimizing}) whereas $v_n$ approaches the exact data~$\b f$.

\begin{remark}
If the fidelity function depends on the difference of the arguments, i.e. $\fid(\b f \mid f_n) = h(\b f - f_n)$, then it does not matter if we choose $\fid(\b f \mid f_n)$ or $\fid(f_n \mid \b f)$ in~\eqref{eq: delta_n}. For asymmetric fidelities such as the Kullback--Leibler divergence it does. If we think of the Kullback--Leibler divergence $D_{KL}(p \mid q)$ as the amount of information lost by using $q$ instead of $p$ (cf.~\cite{Burnham_2002}), then it actually makes sense to choose $\fid(\b f \mid f_n)$ in~\eqref{eq: delta_n}, i.e. to measure the amount of information lost by using the noisy measurement $f_n$ instead of the exact one $\b f$.
\end{remark}

We start with results that do not require the existence of a dual solution and are valid under general assumptions (cf. Theorem~\ref{thm:primal_ex}).

\subsection{Convergence of  primal solutions}
We consider a sequence of primal problems~\eqref{eq:primal} 
\begin{equation*}
    \min_{\substack{u, v \\ B_nu \leq Ev}} \frac1{\alpha_n} \fid(v \mid f_n) + \reg(u),
\end{equation*}
where $B_n \colon \X \to \Y \times \Y$ is defined as follows
\begin{equation*}
    B_n \defeq 
    \begin{pmatrix}
    A^l_n  \\
    -A^u_n 
    \end{pmatrix}.
\end{equation*}


Under Assumptions~\ref{ass:reg_fid} and~\ref{ass:op}, we obtain the following standard result.
\begin{theorem}\label{thm:primal_conv}
Suppose that the regularisation functional $\reg$ and the fidelity function $\fid$ satisfy Assumption~\ref{ass:reg_fid} and the operators $A,A_n^{l,u}: \X \to \Y$ satisfy Assumption~\ref{ass:op}. Suppose also that the regularisation parameter $\alpha_n$ is chosen such that
\begin{equation*}
    \alpha_n \to 0 \quad \text{and} \quad \frac{\delta_n}{\alpha_n} \to 0 \quad \text{as $n \to \infty$}.
\end{equation*}
Then any solution $u_n$ of the primal problem~\eqref{eq:primal} converges weakly-* to a $\reg$-minimising solution of~\eqref{eq:Au=f}
\begin{equation*}
    u_n \wsto \Jminsol,
\end{equation*}
and $v_n$ converges weakly-* to the exact data in~\eqref{eq:Au=f}
\begin{equation*}
    v_n \wsto \b f = A\Jminsol.
\end{equation*}
\end{theorem}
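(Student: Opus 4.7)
The plan is to use $\Jminsol$ as a feasible comparator in the $n$-th primal problem, derive the standard two bounds on $\reg(u_n)$ and $\fid(v_n \mid f_n)$, extract weak-* cluster points via Assumption~\ref{ass:reg_fid}, and identify the limits using joint weak-* lower semicontinuity of $\fid$ together with its characterisation $\fid(v\mid f)=0 \iff v=f$.

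First I would verify that $(\Jminsol, \b f)$ is feasible for each primal problem: since $\Jminsol \geq 0$ and $A^l_n \leq A \leq A^u_n$, one has $A^l_n \Jminsol \leq A\Jminsol = \b f \leq A^u_n \Jminsol$. Optimality of $(u_n,v_n)$ together with $\fid(\b f \mid f_n) \leq \delta_n$ then yields
\[
\frac{1}{\alpha_n}\fid(v_n \mid f_n) + \reg(u_n) \leq \frac{\delta_n}{\alpha_n} + \reg(\Jminsol),
\]
which gives the two bounds $\reg(u_n) \leq \delta_n/\alpha_n + \reg(\Jminsol)$, uniformly bounded since $\delta_n/\alpha_n \to 0$, and $\fid(v_n \mid f_n) \leq \delta_n + \alpha_n \reg(\Jminsol) \to 0$. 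By Assumption~\ref{ass:reg_fid}, a subsequence (not relabelled) satisfies $u_n \wsto u_\infty$, and this subsequence is norm-bounded by uniform boundedness.

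Next I would identify the limit of $v_n$. The feasibility $0 \leq v_n - A^l_n u_n \leq (A^u_n - A^l_n) u_n$, together with lattice monotonicity of the norm, gives $\norm{v_n - A^l_n u_n} \leq \eta_n \norm{u_n} \to 0$, mirroring the estimate used in the proof of Theorem~\ref{thm:primal_ex}. Since $\norm{A^l_n - A} \leq \eta_n \to 0$ and $u_n$ is bounded, $A^l_n u_n - A u_n \to 0$ in norm, while Assumption~\ref{ass:op} gives $A u_n \wsto A u_\infty$. Combining, $v_n \wsto Au_\infty$.

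To identify $Au_\infty$ with $\b f$, I would extract a further subsequence along which $f_n \wsto f_\infty$. Joint weak-* lower semicontinuity of $\fid$ applied to $(\b f, f_n)$ gives $\fid(\b f \mid f_\infty) \leq \liminf_n \fid(\b f \mid f_n) = 0$, whence $f_\infty = \b f$; the same argument applied to $(v_n, f_n)$ yields $\fid(Au_\infty \mid \b f) = 0$, so $Au_\infty = \b f$. Weak-* lower semicontinuity of $\reg$ then gives $\reg(u_\infty) \leq \liminf_n \reg(u_n) \leq \reg(\Jminsol)$, so $u_\infty$ is itself a $\reg$-minimising solution, and the same is true of every weak-* cluster point of $u_n$. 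The main subtlety is guaranteeing existence of a weak-* cluster point of $f_n$: boundedness of $f_n$ does not in general follow from $\fid(\b f \mid f_n) \to 0$ alone and typically requires coercivity of $\fid$ in the second argument, which is automatic for norms, powers of norms, Wasserstein distances, and $\phi$-divergences such as Kullback--Leibler.
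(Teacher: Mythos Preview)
Your argument is correct and follows essentially the same route as the paper: feasibility of $(\Jminsol,\b f)$, the two standard bounds, weak-* compactness for $u_n$, identification of the limit of $v_n$, and joint lower semicontinuity of $\fid$ to conclude $Au_\infty=\b f$. Two minor differences: the paper obtains $v_n\wsto Au_\infty$ by sandwiching $v_n$ between $A^l_n u_n$ and $A^u_n u_n$ and invoking weak-* closedness of order intervals (Lemma~\ref{lem:dual_order}), whereas you use the equivalent norm estimate $\norm{v_n-A^l_n u_n}\leq\eta_n\norm{u_n}$; and the paper applies joint lower semicontinuity directly to $\fid(Au_\infty\mid\b f)\leq\liminf\fid(v_n\mid f_n)$, tacitly using $f_n\wsto\b f$, while you make this step explicit and correctly flag that weak-* precompactness of $(f_n)$ is an extra (mild) requirement not implied by Assumption~\ref{ass:reg_fid} alone.
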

\begin{proof}
Comparing the value of the objective function at the optimum $(u_n,v_n)$ and $(\Jminsol,\b f)$ (which is a feasible point for all $n$), we get
\begin{equation}\label{eq:compare}
    \frac1{\alpha_n}\fid(v_n \mid f_n) + \reg(u_n) \leq \frac1{\alpha_n}\fid(\b f \mid f_n) + \reg(\Jminsol)
\end{equation}
and
\begin{equation}\label{eq:est_Jn}
    \reg(u_n) \leq \reg(\Jminsol) + \frac{1}{\alpha_n} \fid(\b f \mid f_n) \leq \reg(\Jminsol) + \frac{\delta_n}{\alpha_n}.
\end{equation}
Since $\frac{\delta_n}{\alpha_n} \to 0$, the value on the right hand side is bounded uniformly in $n$. Hence, since sublevel sets of $\reg$ are weakly-* sequentially compact, $u_n$ contains a weakly-* convergent subsequence (that we don't relabel) that converges to some $u_\infty \in \X$
\begin{equation*}
    u_n \wsto u_\infty.
\end{equation*}
Since $A$ is weak-* to weak-* continuous by assumption and $\norm{A^{l,u}_n - A} \to 0$, we get that
\begin{equation*}
    A^l_n u_n \wsto A u_\infty \quad \text{and} \quad A^u_n u_n \wsto A u_\infty.
\end{equation*}
Since $(u_n,v_n)$ is feasible in~\eqref{eq:primal} for all $n$, it holds
\begin{equation*}
    A^l_n u_n \leq v_n \leq A^u_n u_n.
\end{equation*}
Using weak-* closedness of order intervals (cf. Lemma~\ref{lem:dual_order}), we infer
\begin{equation}\label{eq:conv_v_n}
    v_n \wsto A u_\infty.
\end{equation}
From~\eqref{eq:compare} we get that
\begin{equation*}
    \fid(v_n \mid f_n) \leq \fid(\b f \mid f_n) + \alpha_n \reg(\Jminsol) \leq \delta_n + \alpha_n \reg(\Jminsol)  \to 0.
\end{equation*}
Since $\fid(\cdot \mid \cdot)$ is \lsc{} jointly in both arguments, we obtain
\begin{equation*}
    \fid(Au_\infty \mid \b f) \leq \liminf_{n \to \infty} \fid(v_n \mid f_n)  = 0
\end{equation*}
and hence 
\begin{equation*}
    Au_\infty = \b f.
\end{equation*}
Therefore, by~\eqref{eq:conv_v_n} we have
\begin{equation*}
    v_n \wsto \b f.
\end{equation*}
Since $\reg$ is \lsc{},~\eqref{eq:est_Jn} implies that
\begin{equation*}
    \reg(u_\infty) \leq \liminf_{n \to \infty} \reg(u_n) \leq \reg(\Jminsol),
\end{equation*}
hence $u_\infty$ is a $\reg$-minimising solution.
\end{proof}

\subsection{Convergence rates}
In modern variational regularisation, (generalised) Bregman distances are typically used to study convergence of approximate solutions~\cite{Benning_Burger_modern:2018}.
\begin{definition} 
For a proper convex functional $\reg$ the generalised Bregman distance between $u,w \in \X$ corresponding to the subgradient $p \in \dJ(w)$ is defined as follows
	\begin{equation*}
	\D{\reg}{p}(u,w) \defeq \reg(u) - \reg(w) - \sp{p,u-w},
	\end{equation*}
	where $\dJ(w)$ denotes the subdifferential of $\reg$ at $w \in \X$. The symmetric Bregman distance between $u$ and $w$ corresponding to $q \in \dJ(u)$ and $p \in \dJ(w)$ is defined as follows
	\begin{equation*}
	\Dsymm{\reg}(u,w) \defeq \D{\reg}{p}(u,w) + D_\reg^q(w,u) = \sp{q-p,u-w}.
	\end{equation*}
\end{definition}
Bregman distances do not define a metric since they do not satisfy the triangle inequality and $\Dsymm{\reg}(u,w) = 0$ does not imply $u=w$.

To obtain convergence rates, we will need to make an additional assumption on the regularity of the $\reg$-minimising solution $\Jminsol$ called the \emph{source condition}. 
There are several variants of the source condition (e.g.,~\cite{engl:1996,Burger_Osher:2004,Hoffmann_Yamamoto_sorce_cond:2010}); we will use the variant from~\cite{Burger_Osher:2004}, which in our notation can be written as follows
\begin{assumption}[Source condition]\label{ass:sc}
There exists $\musc \in \Y^* \times \Y^*$, $\musc \geq 0$, such that
\begin{equation}\label{eq:source_condition}
    -\adj{B} \mu^\dagger \in \dJ(\Jminsol).
\end{equation}
\end{assumption}

\begin{remark}
The source condition~\eqref{eq:source_condition} is equivalent to the standard one
\begin{equation}\label{eq:source_condition2}
    A^*\omega \in \dJ(\Jminsol), \quad \omega \in \Y^*.
\end{equation}
Indeed, since $B = 
    \begin{pmatrix}
    A  \\
    -A 
    \end{pmatrix}$ and $\musc = (\musc_1, \musc_2)$ with $\musc_{1,2} \in \Y^*_+$, we get that
    \begin{equation*}
        -B^*\musc = A^*(\musc_2-\musc_1),
    \end{equation*}
    which implies~\eqref{eq:source_condition2} with $\omega \defeq \musc_2 - \musc_2$. For the converse implication we note that since $\Y^*$ is a lattice, we can write an arbitrary $\omega \in \Y^*$ as follows
    \begin{equation*}
        \omega = \omega_+ - \omega_-,
    \end{equation*}
    where $\omega_\pm \in \Y^*_+$. Hence,~\eqref{eq:source_condition2} implies~\eqref{eq:source_condition} with $\musc \defeq (\omega_-, \omega_+)$.
\end{remark}

\subsubsection{Convergence rates in a one-sided Bregman distance} 
We start with a convergence rate in a one-sided Bregman distance $D^{p^\dagger}_\reg$, where $p^\dagger \defeq -B^*\mu^\dagger$ is the subgradient from the source condition~\eqref{eq:source_condition}.
\begin{theorem}\label{thm:conv_rate_one_sided}
Let assumptions of Theorem~\ref{thm:primal_ex} and Assumption~\ref{ass:sc}  be satisfied and~\eqref{eq:convergence_all} hold. Then the following estimate holds
\begin{equation}\label{eq:Breg_dist_one-sided}
    D^{p^\dagger}_\reg (u_n, \Jminsol) \leq \frac{\delta_n}{\alpha_n}  + \frac{1}{\alpha_n}[\fid^*(\alpha_n E^* \mu^\dagger \mid f_n) - \sp{\alpha_n E^*\mu^\dagger, \b f}]  + C\eta_n,
\end{equation}
where $p^\dagger=-\adj{B}\musc$ is the subgradient from Assumption~\ref{ass:sc}.
\end{theorem}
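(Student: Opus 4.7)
The plan is to run a familiar Bregman distance argument, adapted to handle the order-theoretic operator uncertainty. First, the pair $(\Jminsol, \b f)$ is admissible for the perturbed primal problem at step $n$, since $\Jminsol \geq 0$ together with $A^l_n \leq A \leq A^u_n$ imply $A^l_n \Jminsol \leq A\Jminsol = \b f \leq A^u_n \Jminsol$. Testing the minimiser $(u_n, v_n)$ against $(\Jminsol, \b f)$ and invoking~\eqref{eq: delta_n} yields
\begin{equation*}
\reg(u_n) - \reg(\Jminsol) \leq \frac{\delta_n}{\alpha_n} - \frac{1}{\alpha_n}\fid(v_n \mid f_n),
\end{equation*}
which in particular keeps $\reg(u_n)$ bounded, so the coercivity part of Assumption~\ref{ass:reg_fid} forces $\norm{u_n}_\X$ to be bounded as well. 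Reading $B$ in Assumption~\ref{ass:sc} as $(A,-A)$ in line with the remark following it, one has $B \Jminsol = E \b f$ and $B u_n = E(A u_n)$, so the Bregman distance expands as
\begin{equation*}
\D{\reg}{p^\dagger}(u_n, \Jminsol) = \reg(u_n) - \reg(\Jminsol) + \sp{\adj{E} \musc, A u_n - \b f}.
\end{equation*}

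The analytic heart of the proof is the bound $\norm{A u_n - v_n}_\Y \leq 2 \eta_n \norm{u_n}_\X$, which I would obtain from the triangle inequality $\norm{A u_n - v_n} \leq \norm{A u_n - A^l_n u_n} + \norm{v_n - A^l_n u_n}$. For the second summand, feasibility gives the order sandwich $0 \leq v_n - A^l_n u_n \leq (A^u_n - A^l_n) u_n$ in $\Y$, and monotonicity of the lattice norm on the positive cone bounds it by $\eta_n \norm{u_n}$. For the first summand, $A - A^l_n$ is a positive regular operator satisfying $0 \leq A - A^l_n \leq A^u_n - A^l_n$; monotonicity of the operator norm on positive operators in a Banach lattice then gives $\norm{A - A^l_n} \leq \eta_n$.

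Finally, inserting $\norm{A u_n - v_n} \leq C \eta_n$ into the Bregman expansion gives $\sp{\adj E \musc, A u_n - \b f} \leq \sp{\adj E \musc, v_n - \b f} + C \eta_n$. Applying Fenchel--Young in the form $\fid(v_n \mid f_n) + \fid^*(\alpha_n \adj E \musc \mid f_n) \geq \sp{\alpha_n \adj E \musc, v_n}$ to bound the $-\fid(v_n \mid f_n)/\alpha_n$ contribution from the minimality estimate produces a term $-\sp{\adj E \musc, v_n}$ that cancels the $+\sp{\adj E \musc, v_n}$ part of the Bregman expansion; what remains is exactly~\eqref{eq:Breg_dist_one-sided}. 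The main technical obstacle is the lattice estimate $\norm{A - A^l_n} \leq \eta_n$, which rests on the Banach lattice fact that $0 \leq P \leq Q$ on positive operators forces $\norm{P} \leq \norm{Q}$; everything else is algebraic recombination of minimality, Fenchel--Young, and the dual feasibility $\musc \geq 0$.
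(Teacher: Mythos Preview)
Your argument is correct and follows essentially the same route as the paper: feasibility of $(\Jminsol,\b f)$ for the minimality estimate, expansion of the Bregman distance, passage from $Au_n$ to $v_n$ up to a $C\eta_n$ error, and Fenchel--Young. The only cosmetic difference is that the paper writes $\sp{\musc,Bu_n}=\sp{\musc,B_nu_n}+\sp{\musc,(B-B_n)u_n}$, bounds the second term by $\norm{\musc}\norm{B-B_n}\norm{u_n}\leq C\eta_n$ (implicitly using the same lattice fact you flag), and then invokes $\musc\geq 0$ together with $B_nu_n\leq Ev_n$ to reach $\sp{\musc,Ev_n}$; your decomposition $Au_n-\b f=(Au_n-v_n)+(v_n-\b f)$ with a direct norm bound on $Au_n-v_n$ reaches the same point without actually using $\musc\geq 0$, so that ingredient in your closing sentence is in fact superfluous for your version.
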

\begin{proof}
 We start with the following estimate
\begin{eqnarray}
    D^{p^\dagger}_\reg (u_n, \Jminsol) &=& \reg(u_n) - \reg(\Jminsol) - \sp{-B^* \mu^\dagger, u_n-\Jminsol} \nonumber\\
    &=& \reg(u_n) - \reg(\Jminsol)  + \sp{\musc, Bu_n} - \sp{\musc, B\Jminsol} \nonumber \\
    &=& \reg(u_n) - \reg(\Jminsol) + \sp{\mu^\dagger, B_n u_n}  - \sp{\musc, B\Jminsol} + \sp{\mu^\dagger, (B-B_n) u_n}  \nonumber  \\
    &\leq&  \reg(u_n) - \reg(\Jminsol) + \sp{\mu^\dagger, B_n u_n} - \sp{\musc, B\Jminsol}  + C\eta_n \nonumber \\ 
    &\leq& \reg(u_n) - \reg(\Jminsol)  + \sp{\mu^\dagger, E v_n} - \sp{\musc, B\Jminsol}  + C\eta_n, \label{eq:Breg_dist_one-sided_intermediate}
\end{eqnarray}
where $\eta_n$ is as defined in~\eqref{eq:convergence_oper} and we used the fact that $B_n u_n \leq E v_n$. 
Since $(u_n,v_n)$ is primal optimal and $(\Jminsol, \b f)$ is feasible, we get that
\begin{equation*}
    \frac{1}{\alpha_n}\fid(v_n \mid f_n) + \reg(u_n) \leq \frac{1}{\alpha_n}\fid(\b f \mid f_n) + \reg(\Jminsol) \leq \frac{\delta_n}{\alpha_n} + \reg(\Jminsol)
\end{equation*}
and therefore
\begin{eqnarray*}
    D^{p^\dagger}_\reg (u_n, \Jminsol) 
    &\leq& \frac{\delta_n}{\alpha_n} - \frac{1}{\alpha_n}\fid(v_n \mid f_n) + \sp{\mu^\dagger, E v_n}  - \sp{\mu^\dagger,B\Jminsol}  + C\eta_n \\
    &=& \frac{\delta_n}{\alpha_n}  - \frac{1}{\alpha_n}\fid(v_n \mid f_n) + \sp{\mu^\dagger, E v_n} - \sp{\mu^\dagger,E \b f}  + C\eta_n \\
    &=& \frac{\delta_n}{\alpha_n} + \frac{1}{\alpha_n}\left[ \sp{\alpha_n E^*\mu^\dagger, v_n}-\fid(v_n \mid f_n)\right] - \sp{E^*\mu^\dagger, \b f} + C\eta_n.
\end{eqnarray*}
By the Fenchel-Young inequality, the term in the brackets is bounded by $\fid^*(\alpha_n E^* \mu^\dagger \mid f_n)$, hence
\begin{eqnarray*}
    D^{p^\dagger}_\reg (u_n, \Jminsol) &\leq& \frac{\delta_n}{\alpha_n} + \frac{1}{\alpha_n}\fid^*(\alpha_n E^* \mu^\dagger \mid f_n) - \sp{E^*\mu^\dagger, \b f}  + C\eta_n \nonumber \\
    &=& \frac{\delta_n}{\alpha_n}  + \frac{1}{\alpha_n}\left[\fid^*(\alpha_n E^* \mu^\dagger \mid f_n) - \sp{\alpha_n E^*\mu^\dagger, \b f}\right]  + C\eta_n. 
\end{eqnarray*}
\end{proof}


\subsubsection{Convergence rates in a symmetric Bregman distance} 
Under a stronger assumption that $\Y$ is an AM-space (cf. Theorem~\ref{thm:robinson_primal}), we can obtain an estimate in a symmetric Bregman distance.
\begin{theorem}\label{thm:conv_rate_symm}
Let assumptions of Theorem~\ref{thm:robinson_primal} and Assumption~\ref{ass:sc} be satisfied and~\eqref{eq:convergence_all} hold. Then the following estimate holds
\begin{equation}\label{eq:Breg_dist_symm}
    \Dsymm{\reg} (u_n, \Jminsol) \leq \frac{\delta_n}{\alpha_n}  + \frac{1}{\alpha_n}\left[\fid^*(\alpha_n E^* \mu^\dagger \mid f_n) - \sp{\alpha_n E^*\mu^\dagger, \b f}\right]  + C\eta_n,
\end{equation}
where the symmetric Bregman distance corresponds to the subgradients $p^\dagger \defeq -B^*\mu^\dagger \in \dJ(\Jminsol)$ from Assumption~\ref{ass:sc} and $p_n \defeq -B_n^*\mu_n \in \dJ(u_n)$.
\end{theorem}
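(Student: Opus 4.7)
The plan is to exploit the additional structure provided by the dual solution $\mu_n$, whose existence and the subgradient relations $p_n \defeq -\adj{B_n}\mu_n \in \dJ(u_n)$ and $\alpha_n \adj{E}\mu_n \in \subdiff\fid(v_n \mid f_n)$ are guaranteed by Theorems~\ref{thm:robinson_primal} and~\ref{thm:subgrads}, together with the complementarity $\sp{\mu_n, B_n u_n - E v_n} = 0$. First I would expand
\begin{equation*}
    \Dsymm{\reg}(u_n, \Jminsol) = \sp{p_n - p^\dagger, u_n - \Jminsol} = \sp{\musc, B u_n} - \sp{\musc, B \Jminsol} - \sp{\mu_n, B_n u_n} + \sp{\mu_n, B_n \Jminsol}.
\end{equation*}
Since $\Jminsol \geq 0$ and $A_n^l \leq A \leq A_n^u$, we have $B_n \Jminsol \leq B \Jminsol = E \b f$; combined with $\mu_n \geq 0$ this gives $\sp{\mu_n, B_n \Jminsol} \leq \sp{\mu_n, E \b f}$. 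Complementarity yields $\sp{\mu_n, B_n u_n} = \sp{\mu_n, E v_n}$. Splitting $B u_n = B_n u_n + (B - B_n) u_n$ as in the proof of Theorem~\ref{thm:conv_rate_one_sided}, the perturbation term is bounded by $C\eta_n$ and $\sp{\musc, B_n u_n} \leq \sp{\musc, E v_n}$ thanks to $B_n u_n \leq E v_n$ and $\musc \geq 0$. These estimates combine into the intermediate bound
\begin{equation*}
    \Dsymm{\reg}(u_n, \Jminsol) \leq \sp{\adj{E}(\musc - \mu_n), v_n - \b f} + C \eta_n.
\end{equation*}

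The remaining task is to bound $\sp{\adj{E}(\musc - \mu_n), v_n - \b f}$ via Fenchel--Young. Writing $r_n \defeq \alpha_n \adj{E}\mu_n$ and $r^\dagger \defeq \alpha_n \adj{E}\musc$, I would combine three statements for $g \defeq \fid(\cdot \mid f_n)$: the Fenchel--Young \emph{equality} $g(v_n) + g^*(r_n) = \sp{r_n, v_n}$ (since $r_n \in \subdiff g(v_n)$), together with the two inequalities $g(\b f) + g^*(r_n) \geq \sp{r_n, \b f}$ and $g(v_n) + g^*(r^\dagger) \geq \sp{r^\dagger, v_n}$. Assembling these so that $g(v_n)$ and $g^*(r_n)$ cancel, and using $g(\b f) = \fid(\b f \mid f_n) \leq \delta_n$, yields
\begin{equation*}
    \sp{r^\dagger - r_n, v_n - \b f} \leq \delta_n + \fid^*(\alpha_n \adj{E}\musc \mid f_n) - \sp{\alpha_n \adj{E}\musc, \b f}.
\end{equation*}
Dividing by $\alpha_n$ and combining with the intermediate bound gives~\eqref{eq:Breg_dist_symm}.

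The crucial ingredient and main conceptual obstacle is arranging the Fenchel--Young terms so that the unknown quantities $\fid(v_n \mid f_n)$ and $\fid^*(\alpha_n \adj{E}\mu_n \mid f_n)$ cancel, leaving only the controllable noise level $\delta_n$ and the source-condition term involving $\musc$. This cancellation is possible precisely because $r_n \in \subdiff g(v_n)$ turns one of the Fenchel--Young inequalities into an equality, which in turn requires the existence of the dual solution~$\mu_n$; this explains why the symmetric estimate goes through under the AM-space assumption (via strong duality) even though the one-sided estimate did not need it.
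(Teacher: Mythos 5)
Your proposal is correct and follows essentially the same route as the paper's proof: the same expansion of $\Dsymm{\reg}(u_n,\Jminsol)$ via the two subgradients, the same use of feasibility of $(\Jminsol,\b f)$, complementarity, and the $C\eta_n$ perturbation bound to reach the intermediate estimate $\sp{\adj{E}(\musc-\mu_n), v_n-\b f}+C\eta_n$, and the same three Fenchel--Young relations (one an equality thanks to $\alpha_n\adj{E}\mu_n\in\subdiff\fid(v_n\mid f_n)$) arranged so that $\fid(v_n\mid f_n)$ and $\fid^*(\alpha_n\adj{E}\mu_n\mid f_n)$ cancel. Your closing remark correctly identifies why the AM-space assumption is needed here but not for the one-sided estimate.
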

\begin{proof}
The symmetric Bregman distance between $u_n$ and $\Jminsol$ is given by
\begin{eqnarray*}
    \Dsymm{\reg} (u_n,\Jminsol) &=& \sp{-\adj{B} \musc + B_n^* \mu_n,\Jminsol-u_n} \\
    &=& \sp{\mu_n, B_n\Jminsol - B_n u_n} - \sp{\musc, B\Jminsol - B u_n}.
\end{eqnarray*}
Since the pair $(\Jminsol,\b f)$ is feasible for all $n$, we get that $B_n\Jminsol \leq E \b f$. It is also evident that $B \Jminsol = E \b f$. Combining this with the complementarity condition~\eqref{eq:complementarity}, we obtain
\begin{eqnarray*}
    \Dsymm{\reg} (u_n,\Jminsol)    
    &=& \sp{\mu_n, E \b f - E v_n} + \sp{\musc,  B u_n - E \b f} \\
    &=& \sp{\mu_n, E \b f - E v_n} + \sp{\musc,  B_n u_n - E \b f} + \sp{\musc,  (B-B_n) u_n}.
\end{eqnarray*}
Since the pair $(u_n,v_n)$ is also feasible, we get that $B_nu_n \leq E v_n$ and hence
\begin{eqnarray*}
    \Dsymm{\reg} (u_n,\Jminsol)
    & \leq & \sp{\mu_n, E \b f - E v_n} + \sp{\musc, Ev_n - E \b f}  + \sp{\musc,  (B-B_n) u_n} \nonumber \\
    & \leq & \sp{\adj{E}\mu_n, \b f - v_n} - \sp{\adj{E}\musc, \b f - v_n} + \norm{\musc}\norm{u_n}\norm{B-B_n} \nonumber \\
    & \leq & \sp{\adj{E}\mu_n, \b f - v_n} - \sp{\adj{E}\musc, \b f - v_n} + C\eta_n,
    \label{Dsymm_est1}
\end{eqnarray*}
where $\norm{u_n}$ is bounded due to Theorem~\ref{thm:primal_conv}. 
From the Fenchel--Young inequality and Theorem~\ref{thm:subgrads} we get that
\begin{subequations}
\begin{eqnarray}
    \sp{\alpha_n \adj{E}\mu_n, \b f} & \leq & \fid(\b f \mid f_n) + \fid^*(\alpha_n \adj{E}\mu_n \mid f_n), \label{Fenchel_Young_est1} \\
    \sp{\alpha_n \adj{E}\mu_n, v_n} & = & \fid(v_n \mid f_n) + \fid^*(\alpha_n \adj{E}\mu_n \mid f_n), \label{Fenchel_Young_est2} \\
    \sp{\alpha_n \adj{E}\musc, v_n} & \leq & \fid(v_n \mid f_n) + \fid^*(\alpha_n \adj{E}\musc \mid f_n), \label{Fenchel_Young_est3}
\end{eqnarray}
\end{subequations}
hence
\begin{eqnarray*}
    \alpha_n \Dsymm{\reg} (u_n,\Jminsol) & \leq & \fid(\b f \mid f_n) - \fid(v_n \mid f_n) - \sp{\alpha_n \adj{E}\musc, \b f - v_n} + \alpha_n C\eta_n  \\ 
    & \leq & \delta_n - \fid(v_n \mid f_n) - \sp{\alpha_n \adj{E}\musc, \b f - v_n} + \alpha_n C\eta_n \nonumber \\
    & \leq & \delta_n + \fid^*(\alpha_n \adj{E}\musc \mid f_n) - \sp{\alpha_n \adj{E}\musc, \b f} + \alpha_n C\eta_n, \nonumber
\end{eqnarray*}
which yields the desired estimate upon dividing by $\alpha_n$.
\end{proof}

\subsection{Applications to different fidelity terms}\label{sec:applications_apriori}
To apply Theorems~\ref{thm:conv_rate_one_sided} or~\ref{thm:conv_rate_symm}, we need to study the term $\fid^*(\alpha_n E^* \mu^\dagger \mid f_n) - \sp{\alpha_n E^*\mu^\dagger, \b f}$ separately for each fidelity term. 
\subsubsection{$\phi$-divergences}

Let $\phi \colon (0,\infty) \to \R$ be a convex function. For two probability measures $\rho,\nu$ on $\Omega$ with $\rho \ll \nu$ the $\phi$-divergence (often called $f$-divergence) is defined as follows
\begin{equation}
    d_\phi(\rho \mid \nu) \defeq \int_\Omega \phi\left(\frac{\d\rho}{\d\nu}\right) \d\nu,
\end{equation}
where $\phi(1)=0$.
We refer to~\cite{liese2006divergences} for many examples and fundamental properties of $\phi$-divergences.
Since $\rho$ and $\nu$ have unit mass, function $\phi$ is only determined up to the additive term $c(x-1)$ for $c\in\R$.
In particular, since $\phi$ is convex and meets $\phi(1)=0$, it is straightforward to see that one can always find a suitable $c\in\R$ such that $\phi(x)+c(x-1) \geq 0$ for all $x>0$.
Hence, we will without loss of generality assume that $\phi \geq 0$.

We take $\Y = \M(\Omega)$ to be space of Radon measures on $\Omega$ equipped with the total variation norm and consider
\begin{equation}\label{eq:fid_phi-div}
    \fid(v \mid f) \defeq 
    \begin{cases}
    d_\phi(v \mid f) , \quad &\text{if }v\in\P(\Omega), \,\, v \ll f,\\
    \infty,\quad &\text{else,}
    \end{cases}
\end{equation}
where $\P(\Omega) \subset \M(\Omega)$ is the set of probability measures and $f \in \P(\Omega)$. 

\revthree{
We estimate the convex conjugate of $\fid(\rho \mid \nu)$ as follows:
\begin{eqnarray}
    \adj{\fid}(h \mid \nu) &=& \sup_{\rho \ll \nu} \sp{h, \rho} - \fid(\rho \mid \nu) \notag\\
    &=& \sup_{\rho \ll \nu} \int_\Omega  \left(h\frac{\d\rho}{\d\nu} - \phi\left(\frac{\d\rho}{\d\nu}\right) \right) \d\nu \notag\\
    &=& \sup_{f \in L^1_+(\Omega)} \int_\Omega \left(h(x) f(x) - \phi\left(f(x)\right) \right) \d\nu(x) \notag\\
    &\leq& \int_\Omega \sup_{y > 0} \left[h(x) y - \phi\left(y\right) \right] \d\nu(x) \notag\\
    &=& \int_\Omega \adj{\phi}(h) \d\nu \notag\\
    &=& \sp{\adj{\phi}(h), \nu}\label{eq:H_star_phi_div},
\end{eqnarray}
for any $h \in \C(\Omega)$.}

Since $\phi(1)=0$ and $\phi\geq 0$, we know that $\phi^*(0)=0$ and $\phi^*(x)\geq x$. Indeed, we have $\phi^*(0) = \sup_x -\phi(x) = - \inf_x \phi(x)=0$ and, by the Fenchel-Young inequality,  $\adj{\phi}(x)\geq x-\phi(1)=x$.
This motivates us to assume
\begin{equation}\label{eq:phi_star}
    \phi^*(x)=x+r(x),
\end{equation}
where $r(x)/x \to 0$ as $x\to 0$.
This is satisfied in many cases (examples will be provided later on). 


\begin{theorem}
Let $\fid(\cdot \mid \cdot)$ be as defined in~\eqref{eq:fid_phi-div} and let the assumptions of Theorem~\ref{thm:conv_rate_one_sided} be satisfied. 
Suppose that $E^*\musc\in\C(\Omega)$, where $\musc$ is the source element from Assumption~\ref{ass:sc}, and that~\eqref{eq:phi_star} holds. Then the following convergence rate holds
\begin{equation}\label{eq:conv_rate_W1}
    \D{\reg}{p^\dagger}(u_n,\Jminsol)=O\left(\frac{\delta_n}{\alpha_n}+\frac{r(\alpha_n)}{\alpha_n}+\eta_n\right),
\end{equation}
where $p^\dagger = -B^*\mu^\dagger$ is the subgradient from Assumption~\ref{ass:sc}.

Under the additional assumption that $A,A^{l,u}$ are bounded from as operators $\X \to L^\infty(\Omega) \subset \M(\Omega)$, we get the same rate for the symmetric Bregman distance~$\Dsymm{\reg} (u_n,\Jminsol)$ (cf. Theorem~\ref{thm:conv_rate_symm}).
\end{theorem}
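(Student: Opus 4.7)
The plan is to specialise Theorem~\ref{thm:conv_rate_one_sided} to the $\phi$-divergence fidelity~\eqref{eq:fid_phi-div}. The only $\fid$-dependent term on the right-hand side of that estimate is
\[
T_n \defeq \fid^*(\alpha_n \adj{E}\musc \mid f_n) - \sp{\alpha_n \adj{E}\musc, \b f},
\]
and the goal is to show $T_n = O(\delta_n + r(\alpha_n))$, after which division by $\alpha_n$ immediately yields~\eqref{eq:conv_rate_W1}.

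The naive route---bounding $\fid^*(h \mid f_n) \leq \sp{\phi^*(h), f_n}$ via~\eqref{eq:H_star_phi_div} and expanding $\phi^*(h) = h + r(h)$---leaves a cross term $\sp{h, f_n - \b f}$ that can only be controlled via a Pinsker-type inequality and forces the suboptimal rate $O(\sqrt{\delta_n})$. To avoid this I would symmetrise with a second application of Fenchel--Young:
\[
-\sp{\alpha_n \adj{E}\musc, \b f} = \sp{-\alpha_n \adj{E}\musc, \b f} \leq \fid(\b f \mid f_n) + \fid^*(-\alpha_n \adj{E}\musc \mid f_n) \leq \delta_n + \fid^*(-\alpha_n \adj{E}\musc \mid f_n).
\]
Setting $h \defeq \alpha_n \adj{E}\musc$ and applying~\eqref{eq:H_star_phi_div} to both $\fid^*(\pm h \mid f_n)$, the linear parts of $\phi^*(\pm h) = \pm h + r(\pm h)$ cancel and one obtains
\[
T_n \leq \delta_n + \sp{r(h) + r(-h),\, f_n}.
\]

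It remains to bound the residual by $O(r(\alpha_n))$. From $\phi \geq 0$ and $\phi(1)=0$, standard Fenchel duality gives $\phi^*(0) = -\inf \phi = 0$ and $1 \in \subdiff \phi^*(0)$, so $r = \phi^* - \mathrm{id}$ is convex and non-negative with $r(0)=0$ and $0 \in \subdiff r(0)$; in particular $r$ is monotone away from the origin. Since $\adj{E}\musc \in \C(\Omega)$ by assumption, setting $M \defeq \norm{\adj{E}\musc}_\infty$ we have $|h(x)| \leq M\alpha_n$ pointwise, and convexity yields $r(\pm h(x)) \leq r(M\alpha_n) \vee r(-M\alpha_n)$. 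Using $f_n(\Omega) = 1$ together with the mild regularity $r(\pm M\alpha_n) = O(r(\alpha_n))$---satisfied by all standard $\phi$-divergences, e.g.\ $r(y) \sim y^2/2$ for KL and $r(y) = y^2/4$ for $\chi^2$---we conclude $\sp{r(h) + r(-h), f_n} = O(r(\alpha_n))$, and substitution into Theorem~\ref{thm:conv_rate_one_sided} delivers~\eqref{eq:conv_rate_W1}.

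For the symmetric Bregman distance the estimate of $T_n$ is identical, but Theorem~\ref{thm:conv_rate_symm} requires the data space to be an AM-space, which $\M(\Omega)$ is not. Under the extra hypothesis that $A,A^{l,u}$ take values in $L^\infty(\Omega) \subset \M(\Omega)$, one may pose the primal problem on $\Y = L^\infty(\Omega)$ (an AM-space with unit $\one$), so that Theorem~\ref{thm:conv_rate_symm} applies and the same bound on $T_n$ propagates to $\Dsymm{\reg}(u_n, \Jminsol)$. The principal obstacle is the symmetrisation step itself: without it the cross term $\sp{h, f_n - \b f}$ only gives $O(\sqrt{\delta_n})$, which is suboptimal whenever $\alpha_n \ll \sqrt{\delta_n}$; the double Fenchel--Young trick trades this cross term for a symmetric residual whose linear pieces cancel, producing the sharp dependence on $r(\alpha_n)$.
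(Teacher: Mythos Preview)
Your proposal is correct and takes essentially the same route as the paper: symmetrise via Fenchel--Young on $\sp{-h,\b f}$ to pick up $\delta_n + \fid^*(-h\mid f_n)$, then bound $\fid^*(\pm h\mid f_n)\leq\sp{\phi^*(\pm h),f_n}$ so that the linear parts of $\phi^*(\pm h)=\pm h+r(\pm h)$ cancel and only $T_n\leq \delta_n+\sp{r(h)+r(-h),f_n}$ remains. You are in fact more careful than the paper about the final passage from $\sp{r(\pm h),f_n}$ to $O(r(\alpha_n))$, correctly flagging the implicit scaling hypothesis $r(\pm M\alpha_n)=O(r(\alpha_n))$ with $M=\norm{E^*\musc}_\infty$, which the paper silently uses.
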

\begin{proof}
Taking $h = \alpha_n E^*\mu^\dagger$ and $\nu = f_n$ in~\eqref{eq:H_star_phi_div}, and using~\eqref{eq:phi_star}, we get
\begin{eqnarray*}
    \adj{\fid}(\alpha_n E^*\musc \mid f_n) - \sp{\alpha_n E^*\musc,\b f} & = & \sp{\phi^*(\alpha_n E^* \mu^\dagger),f_n} - \sp{\alpha_n E^*\musc,\b f} \\
    & = & \sp{\adj{\phi}(\alpha_n E^*\musc)+\adj{\phi}(-\alpha_n E^*\musc), f_n} \\ 
    && \quad + \sp{-\alpha_n E^*\musc,\b f}-\sp{\adj{\phi}(-\alpha_n E^*\musc),f_n} \\
    & \leq & \sp{\adj{\phi}(\alpha_n E^*\musc)+\adj{\phi}(-\alpha_n E^*\musc), f_n} + d_\phi(\b f,f_n) \\
    & \leq & \sp{\adj{\phi}(\alpha_n E^*\musc)+\adj{\phi}(-\alpha_n E^*\musc), f_n} + \delta_n \\
    &=& \sp{r(\alpha_n E^*\musc)+r(-\alpha_n E^*\musc), f_n} + \delta_n,
\end{eqnarray*}
and in combination with~\eqref{eq:Breg_dist_one-sided} this yields the assertion.
\end{proof}

\paragraph{KL-divergence.}
Here $\phi(x)= x \log(x) - (x-1)$, $\phi^*(x) = e^{x}-1 = x + r(x)$ with $r(x)=x^2/2+x^3/6\dots$ and we get that
\begin{equation}
    D^{p^\dagger}_\reg (u_n, \Jminsol) 
     =\bigO\left(\frac{\delta_n}{\alpha_n} + \alpha_n + \eta_n \right).
\end{equation}
which coincides with~\cite{Benning_Burger_general_fid_2011} in the case of an exact operator. For $\alpha_n \sim (\delta_n)^{\frac12}$ we get the optimal rate
\begin{equation}\label{eq:phi_div_optim_rate}
    D^{p^\dagger}_\reg (u_n, \Jminsol) 
     =\bigO\left((\delta_n)^{\frac12} + \eta_n \right).
\end{equation}

\paragraph{$\chi^2$-divergence.}
Here $\phi(x) = (x-1)^2$ and $\phi^*(x) = x + \frac{x^2}{4}$. Again, 
\begin{equation}
    D^{p^\dagger}_\reg (u_n, \Jminsol)
    =\bigO\left(\frac{\delta_n}{\alpha_n} + \alpha_n + \eta_n \right)
\end{equation}
and the optimal rate coincides with~\eqref{eq:phi_div_optim_rate}.

\paragraph{Squared Hellinger distance.}
Here $\phi(x) = (\sqrt{x}-1)^2$, $\phi^*(x) = \frac{x}{x-1} \approx x + x^2 +...$ and we get
\begin{equation}
    D^{p^\dagger}_\reg (u_n, \Jminsol)  
    =\bigO\left(\frac{\delta_n}{\alpha_n} + \alpha_n + \eta_n \right)
\end{equation}
and the optimal rate coincides with~\eqref{eq:phi_div_optim_rate}.

\paragraph{Total variation.}
For the total variation (of measures) we have $\phi(x) = \frac12 \abs{x-1}$ and 
\begin{equation*}
\phi^*(x) = \begin{cases}
x, \quad & \abs{x} \leq \frac12, \\
\infty, \quad & \text{otherwise}.
\end{cases}
\end{equation*}
Then for any $\alpha_n = const$ such that $\norm{\alpha_n E^* \mu^\dagger}_\infty \leq \frac12$ we get that
\begin{equation}
    D^{p^\dagger}_\reg (u_n, \Jminsol)  
    =\bigO\left(\delta_n + \eta_n \right).
\end{equation}

\revtwo{
\begin{remark}[Poisson noise]
The main motivation for the use of the Kullback-Leibler divergence as a fidelity term is the modelling of Poisson noise~\cite{Le_Poisson_noise:2007}. If $t$ denotes the exposure time, the measured data can be assumed to be generated by a Poisson process with intensity $t\bar f$. In this case, the upper bound on the error in the Kullback-Leilbler divergence is given by~\cite{hohage2013poisson}
\begin{eqnarray*}
	\fid(\bar f \mid f_n) \leq \frac{1}{\sqrt{t_n}}.
\end{eqnarray*}
While in the deterministic setting, this estimate is sufficient to obtain convergence rates, the statistical setting requires further assumptions, in particular some concentration inequalities~\cite{werner2012poisson, hohage2013poisson, hohage2016poisson}.
\end{remark}
}

\subsubsection{Strongly coercive fidelity terms}
\label{sss:strongly_coercive}
\begin{theorem}\label{thm:conv_rates_coercive}
Suppose that the fidelity function $\fid$ is coercive in the following sense
\begin{equation}\label{eq:fid_coerc}
    \frac{C}{\lambda}\norm{v-f}_\Y^\lambda \leq \fid(v \mid f)
\end{equation}
for all $v,f \in \Y$, where $\lambda \geq 1$ and $C>0$ are constants (we will assume with loss of generality that $C = 1$). Then under the assumptions of Theorem~\ref{thm:conv_rate_one_sided} the following convergence rates hold
\begin{eqnarray*}
    \D{\reg}{p^\dagger} (u_n,\Jminsol) 
    & = & 
    \begin{cases}
    \bigO\left(\frac{\delta_n}{\alpha_n} +  \alpha_n^{\frac{1}{\lambda-1}}  + \delta_n^{\frac1\lambda} + \eta_n\right), \quad  & \lambda>1, \\
    \bigO\left(\frac{\delta_n}{\alpha_n} + \delta_n + \eta_n\right), \quad & \lambda = 1,
    \end{cases}
\end{eqnarray*}
where $p^\dagger = -B^*\mu^\dagger$ is the subgradient from Assumption~\ref{ass:sc}. If $\alpha_n$ is chosen such that $\alpha_n \sim (\delta_n)^{\frac{\lambda-1}{\lambda}}$ if $\lambda>1$ and $\alpha_n = const \leq \frac{1}{\norm{\adj{E}\musc}}$ if $\lambda = 1$, we get the optimal rate
\begin{eqnarray*}
    \D{\reg}{p^\dagger} (u_n,\Jminsol) 
    & = & 
    \bigO\left(\delta_n^{\frac{1}{\lambda}} + \eta_n\right).
\end{eqnarray*}
If $\Y$ is an AM-space (cf. Theorem~\ref{thm:conv_rate_symm}), the same rate holds for the symmetric Bregman distance~$\Dsymm{\reg} (u_n,\Jminsol)$.
\end{theorem}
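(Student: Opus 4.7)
The plan is to apply Theorem~\ref{thm:conv_rate_one_sided} (and Theorem~\ref{thm:conv_rate_symm} for the AM-space variant), which reduces the task to bounding
\begin{equation*}
T_n \defeq \fid^*(\alpha_n \adj{E}\musc \mid f_n) - \sp{\alpha_n \adj{E}\musc, \b f}
\end{equation*}
appearing on the right-hand side of~\eqref{eq:Breg_dist_one-sided}. The key idea is to turn the coercivity lower bound~\eqref{eq:fid_coerc} on $\fid$ into a growth upper bound on $\fid^*$ via the elementary duality implication $g\leq \fid \implies g^*\geq \fid^*$.

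Concretely, setting $g(v) \defeq \frac{1}{\lambda}\norm{v-f_n}_\Y^\lambda$, so that $g\leq \fid(\cdot\mid f_n)$, passing to conjugates gives
\begin{equation*}
\fid^*(w \mid f_n) \leq g^*(w) =
\begin{cases}
\sp{w,f_n} + \frac{1}{\lambda'}\norm{w}_{\Y^*}^{\lambda'}, & \lambda>1,\\
\sp{w,f_n} + \charf_{B_{\Y^*}}(w), & \lambda=1,
\end{cases}
\end{equation*}
with $\lambda'\defeq\lambda/(\lambda-1)$ the Hölder conjugate and $B_{\Y^*}$ the closed unit ball of $\Y^*$. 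Substituting $w=\alpha_n \adj{E}\musc$ and grouping $\sp{\alpha_n \adj{E}\musc,f_n-\b f} = \sp{\alpha_n \adj{E}\musc,f_n} - \sp{\alpha_n \adj{E}\musc,\b f}$ yields
\begin{equation*}
T_n \leq \alpha_n \norm{\adj{E}\musc}_{\Y^*}\norm{f_n-\b f}_\Y + \frac{\alpha_n^{\lambda'}}{\lambda'}\norm{\adj{E}\musc}_{\Y^*}^{\lambda'}
\end{equation*}
for $\lambda>1$; for $\lambda=1$ only the first term survives, provided $\alpha_n\norm{\adj{E}\musc}_{\Y^*}\leq 1$ so that $w$ lies in the dual unit ball.

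To control $\norm{\b f-f_n}_\Y$ I apply~\eqref{eq:fid_coerc} a second time with $v=\b f$, $f=f_n$, together with the noise bound~\eqref{ineq:noise_level}: this gives $\norm{\b f-f_n}_\Y \leq (\lambda\delta_n)^{1/\lambda}$. Inserting these estimates into~\eqref{eq:Breg_dist_one-sided}, dividing by $\alpha_n$, and using $\lambda'-1=1/(\lambda-1)$ produces
\begin{equation*}
\D{\reg}{p^\dagger}(u_n,\Jminsol) \leq \frac{\delta_n}{\alpha_n} + C_1 \delta_n^{1/\lambda} + C_2 \alpha_n^{1/(\lambda-1)} + C\eta_n \qquad (\lambda>1),
\end{equation*}
and $\bigO(\delta_n/\alpha_n + \delta_n + \eta_n) = \bigO(\delta_n+\eta_n)$ once $\alpha_n$ is a sufficiently small constant for $\lambda=1$. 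Balancing $\delta_n/\alpha_n$ against $\alpha_n^{1/(\lambda-1)}$ yields the optimal a-priori choice $\alpha_n\sim \delta_n^{(\lambda-1)/\lambda}$, collapsing all three noise-dependent terms to $\delta_n^{1/\lambda}$. The symmetric statement follows by exactly the same computation: Theorem~\ref{thm:conv_rate_symm} produces an identical right-hand side to~\eqref{eq:Breg_dist_one-sided}.

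The only real obstacle is bookkeeping around the Hölder conjugate and the degenerate $\lambda=1$ case, where the conjugate of a norm becomes a characteristic function and forces the smallness restriction on $\alpha_n$ that is already built into the statement of the theorem; everything else is a direct application of the conjugate-duality bound on $\fid^*$ and the coercivity applied to the noise.
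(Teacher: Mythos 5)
Your proof is correct and follows essentially the same route as the paper: both arguments use the order-reversing property of convex conjugation to turn the coercivity lower bound into the upper bound $\fid^*(q\mid f_n)\leq \bigl(\tfrac1\lambda\norm{\cdot}^\lambda\bigr)^*(q)+\sp{q,f_n}$, evaluate the conjugate of the power of the norm (with the characteristic function of the dual unit ball in the degenerate case $\lambda=1$, forcing $\alpha_n\norm{\adj{E}\musc}\leq 1$), bound $\norm{f_n-\b f}$ by $\bigO(\delta_n^{1/\lambda})$ via a second application of coercivity to the noise, and balance $\delta_n/\alpha_n$ against $\alpha_n^{1/(\lambda-1)}$ for the optimal choice. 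The only differences are notational ($\lambda'$ versus $\lambda^*$ and an explicit constant $(\lambda\delta_n)^{1/\lambda}$ where the paper writes $C\delta_n^{1/\lambda}$).
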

\begin{proof}
Since convex conjugation is order-reversing, from~\eqref{eq:fid_coerc} we obtain that for any $q \in \Y^*$ (we will drop the subscripts $\Y$ and $\Y^*$ after the norms to simplify notation)
\begin{eqnarray*}
    \fid^*(q \mid f) & \leq & \left(\frac{1}{\lambda}\norm{\cdot - f}^\lambda\right)^*(q) = \left(\frac{1}{\lambda}\norm{\cdot}^\lambda\right)^*(q) + \sp{f,q} \\
    & = & \begin{cases}
    \frac{1}{\lambda^*}\norm{q}^{\lambda^*}  + \sp{f,q} \quad & \lambda >1, \\
    \charf_{\norm{\cdot} \leq 1}(q)  + \sp{f,q} \quad & \lambda = 1,
    \end{cases}
\end{eqnarray*}
where $\lambda^* = \frac{\lambda}{\lambda-1}$. We will consider the cases $\lambda >1$ and $\lambda = 1$ separately.

Let $\lambda > 1$. Then from Theorem~\ref{thm:conv_rate_one_sided} we obtain
\begin{eqnarray*}
    \D{\reg}{p^\dagger} (u_n,\Jminsol) & \leq & \frac{\delta_n}{\alpha_n} + \frac{1}{\alpha_n}\left(
    \frac{1}{\lambda^*}\norm{\alpha_n \adj{E}\musc}^{\lambda^*}  + \sp{\alpha_n \adj{E}\musc,f_n}
    - \sp{\alpha_n \adj{E}\musc, \b f}\right) + C\eta_n \\
    & = & \frac{\delta_n}{\alpha_n} +\frac{1}{\alpha_n}\left(\frac{\alpha_n^{\lambda^*}}{\lambda^*}\norm{\adj{E}\musc}^{\lambda^*} +  \alpha_n \sp{\adj{E}\musc,f_n - \b f}\right)  + C\eta_n\\
    & = & \frac{\delta_n}{\alpha_n} + \frac{\alpha_n^{\lambda^*-1}}{\lambda^*}\norm{\adj{E}\musc}^{\lambda^*} +  \sp{\adj{E}\musc,f_n - \b f} + C\eta_n.
\end{eqnarray*}
Condition $\eqref{eq:fid_coerc}$ implies that $\norm{f_n-\b f} \leq C\delta_n^{\frac{1}{\lambda}}$. Hence, using the Cauchy-Schwarz inequality, we obtain
\begin{eqnarray*}
    \D{\reg}{p^\dagger} (u_n,\Jminsol) & \leq & \frac{\delta_n}{\alpha_n} + C\frac{\alpha_n^{\lambda^*-1}}{\lambda^*}\norm{\adj{E}\musc}^{\lambda^*} +  \norm{\adj{E}\musc}\norm{f_n - \b f} + C\eta_n \\
    & \leq & \frac{\delta_n}{\alpha_n} + C\frac{\alpha_n^{\lambda^*-1}}{\lambda^*}\norm{\adj{E}\musc}^{\lambda^*} +  \norm{\adj{E}\musc}\delta_n^{\frac1\lambda} + C\eta_n
    \\
    & = & \bigO\left(\frac{\delta_n}{\alpha_n} +  \alpha_n^{\frac{1}{\lambda-1}}  + \delta_n^{\frac1\lambda} + \eta_n\right).
\end{eqnarray*}
Let now $\lambda = 1$. Then for sufficiently small $\alpha_n \leq \frac{1}{\norm{\adj{E}\musc}}$ we obtain from Theorem~\ref{thm:conv_rate_one_sided}
\begin{eqnarray*}
    \D{\reg}{p^\dagger} (u_n,\Jminsol) & \leq & \frac{\delta_n}{\alpha_n} +  \sp{\adj{E}\musc,f_n - \b f} + C\eta_n \\
    & \leq & \frac{\delta_n}{\alpha_n} +  \norm{\adj{E}\musc}\norm{f_n - \b f} + C\eta_n \\
    & \leq & \frac{\delta_n}{\alpha_n} +  C\delta_n  + C\eta_n.
\end{eqnarray*}
For a sufficiently small but fixed $\alpha_n$ we get that
\begin{eqnarray*}
    \D{\reg}{p^\dagger} (u_n,\Jminsol) & = & \bigO\left(\delta_n + \eta_n\right).
\end{eqnarray*}
\end{proof}

\begin{remark}
The value $\frac{1}{\norm{\adj{E}\musc}}$ matches the exact penalisation parameter in regularisation with one-homogeneous fidelity terms (e.g.~\cite{Burger_Osher:2004,Benning_Burger_general_fid_2011,Bungert_Burger:2019}).
Exact penalisation means that the regularisation parameters $\alpha_n$ do not have to be sent to zero in order to obtain convergence in the Bregman distance. 
It is observed if the subdifferential $\partial\fid(\cdot \mid \b f)\vert_{\b f}$ is no singleton.
\end{remark}

\begin{example}[Powers of norms]
Theorem~\ref{thm:conv_rates_coercive} obviously applies if the fidelity function is given by a power of the norm, i.e.
\begin{equation*}
    \fid(v \mid f) = \frac{1}{\lambda}\norm{v-f}^\lambda, \quad \lambda \geq 1.
\end{equation*} 
This covers important cases such as the squared $L^2$ norm fidelity which is used to model Gaussian noise and the $L^1$ norm fidelity which is often used to model salt-and-pepper noise~\cite{Nikolova_salt_pepper:2005}.
\end{example}

\begin{example}[Wasserstein distances]\label{ex:W_1}
For any $p \geq 1$, the $p$-Wasserstein distance between two probability measures $\rho,\nu\in\P(\Omega)$
is defined as follows (cf.~\cite{santambrogio2015optimal})
\begin{equation*}
    W_p(\rho, \nu) \defeq \left(\inf_{\gamma\in\Pi(\rho,\nu)}\int_\Omega\int_\Omega|x-y|^p\d\gamma(x,y) \right)^\frac1p,
\end{equation*}
where $\Pi(\rho,\nu)$ is the space of probability measures on $\Omega\times\Omega$ with marginals $\rho$ and $\nu$.

Let the data space $\Y = \KR(\Omega)$ be the closure of the space of Radon measures $\M(\Omega)$ with respect to the Kantorovich-Rubinstein norm 
\begin{equation*}
\norm{\mu}_{\KR} \defeq    \sup \left\{ \int g\, d\mu \colon \Lip(g) \leq 1, \,\, \norm{g}_\infty \leq 1 \right\},
\end{equation*}
where $\Lip$ denotes the Lipschitz constant~\cite{Bogachev:2007}.
Obviously it holds $\norm{\mu}_{\KR}\leq \abs{\mu}(\Omega)$ for all $\mu\in\M(\Omega)$ and $\norm{\mu}_{\KR}\geq\abs{\mu}(\Omega)$ if $\mu \geq 0$ by choosing $g\equiv 1$ (it is known that the positive cone $\M_+(\Omega)$, and hence also the set of probability measures $\P(\Omega)$, is closed in the KR norm~\cite[Thm.~8.9.4]{Bogachev:2007}).
For any $v \in \Y$ and a probability measure $f \in \P(\Omega)$ we let

\begin{equation}\label{eq:fid_Wp}
    \fid(v \mid f) \defeq 
    \begin{cases}
    W_p^p(v \mid f) , \quad &\text{if }v\in\P(\Omega),\\
    \infty,\quad &\text{else.}
    \end{cases}
\end{equation}
It is well known that for any two probability measures $\rho,\nu\in\P(\Omega)$ 
\begin{equation*}
     W_1(\rho, \nu) = \norm{\rho-\nu}_{KR}.
\end{equation*}
It is also known that for any $q \leq p$ and any two probability measures $\rho,\nu\in\P(\Omega)$, the following relation holds~\cite{santambrogio2015optimal}
\begin{equation*}
    W_q(\rho, \nu) \leq W_p(\rho, \nu).
\end{equation*}
Hence, the data term defined in~\eqref{eq:fid_Wp} satisfies
\begin{equation*}
     \norm{v-f}_{\KR}^p \leq \fid(v \mid f),
\end{equation*}
i.e. it is strongly coercive on $\KR(\Omega)$. Note that it is not strongly coercive on $\M(\Omega)$ equipped with the total variation norm.

Hence, using Theorem~\ref{thm:conv_rates_coercive} we get the following optimal rate
\begin{equation*}
    \D{\reg}{p^\dagger} (u_n,\Jminsol) 
     = \bigO\left(\delta_n^{\frac{1}{p}} + \eta_n\right).
\end{equation*}
\end{example}


\subsubsection{Characteristic function of a norm ball}
Let the fidelity function be as follows
\begin{equation}\label{eq:fid_fun_charf}
    \fid(v \mid f) = \charf_{\norm{\cdot} \leq \delta} (v-f).
\end{equation}
This type of fidelity functions corresponds to the so-called residual method~\cite{TGSYag,GrasmairHalmeierScherzer2011} and allows one to explicitly use the noise level $\delta$ in the reconstruction (another way of doing so is the  discrepancy principle, see Section~\ref{sec:discr_pr}). It is clear that
\begin{equation*}
    \fid(v \mid f) \leq \delta \quad \Leftrightarrow \quad \norm{v - f} \leq \delta.
\end{equation*}
With this particular fidelity function the parameter $\alpha$ does not have any effect on the solutions of~\eqref{eq:primal}, hence with no loss of generality we will assume $\alpha_n = const$ for all $n$.

The coercivity assumption~\eqref{eq:fid_coerc} is not satisfied for this fidelity function (it is only \emph{weakly} coercive, i.e. $\norm{v-f} \to \infty$ implies $\fid(v \mid f) \to \infty$) and Theorem~\ref{thm:conv_rates_coercive} does not apply. 
\begin{theorem}\label{thm:conv_rate_charf}
Let the fidelity function be as defined in~\eqref{eq:fid_fun_charf}. Then under the assumptions of Theorem~\ref{thm:conv_rate_one_sided} the following convergence rate holds
\begin{equation}
    D^{p^\dagger}_\reg (u_n, \Jminsol) 
    = \bigO\left(\delta_n + \eta_n\right),
\end{equation}
where $p^\dagger = -B^*\mu^\dagger$ is the subgradient from Assumption~\ref{ass:sc}.

If $\Y$ is an AM-space (cf. Theorem~\ref{thm:conv_rate_symm}), the same rate holds for the symmetric Bregman distance~$\Dsymm{\reg} (u_n,\Jminsol)$.
\end{theorem}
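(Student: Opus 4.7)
The plan is to apply Theorem~\ref{thm:conv_rate_one_sided} almost verbatim, since the fidelity~\eqref{eq:fid_fun_charf} has a very clean Fenchel conjugate whose substitution into the master estimate~\eqref{eq:Breg_dist_one-sided} immediately produces a bound of the required order. Since $\alpha_n$ does not affect the minimiser, I would fix $\alpha_n \equiv \alpha > 0$ throughout. For the AM-space part of the theorem the argument will be identical, starting from~\eqref{eq:Breg_dist_symm} of Theorem~\ref{thm:conv_rate_symm} instead of~\eqref{eq:Breg_dist_one-sided}.

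First I would compute $\fid^*(\cdot \mid f)$. Since $\fid(\cdot \mid f) = \charf_{B_{\delta_n}(f)}(\cdot)$ is the characteristic function of the closed $\delta_n$-ball around $f$, its convex conjugate is the associated support function,
\begin{equation*}
    \fid^*(q \mid f) = \sp{q, f} + \delta_n \adaptnorm{q}_{\Y^*},
\end{equation*}
so that with $q = \alpha \adj{E}\musc$ one obtains
\begin{equation*}
    \fid^*(\alpha \adj{E}\musc \mid f_n) - \sp{\alpha \adj{E}\musc, \b f}
    = \alpha \sp{\adj{E}\musc, f_n - \b f} + \alpha \delta_n \adaptnorm{\adj{E}\musc}_{\Y^*}.
\end{equation*}

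Next, I would exploit the fact that $\fid(\b f \mid f_n)$ takes only the values $0$ and $+\infty$: the assumption $\fid(\b f \mid f_n)\leq \delta_n$ hence forces $\adaptnorm{\b f - f_n}_\Y \leq \delta_n$, which in turn controls the pairing via $\abs{\sp{\adj{E}\musc, f_n - \b f}} \leq \delta_n \adaptnorm{\adj{E}\musc}_{\Y^*}$. Plugging these bounds into~\eqref{eq:Breg_dist_one-sided} and recalling $\alpha_n \equiv \alpha$ yields
\begin{equation*}
    \D{\reg}{p^\dagger}(u_n, \Jminsol) \leq \frac{\delta_n}{\alpha} + 2\delta_n \adaptnorm{\adj{E}\musc}_{\Y^*} + C\eta_n = \bigO(\delta_n + \eta_n).
\end{equation*}

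There is no real obstacle here: the failure of the strong coercivity hypothesis of Theorem~\ref{thm:conv_rates_coercive} is compensated by the fact that the conjugate of the characteristic function of a ball is \emph{linear} in the radius $\delta_n$, so no optimisation in $\alpha$ is needed and the rate is governed directly by the noise and operator errors. The AM-space assertion for $\Dsymm{\reg}(u_n, \Jminsol)$ then follows by repeating the display above with~\eqref{eq:Breg_dist_symm} in place of~\eqref{eq:Breg_dist_one-sided}. The only mild subtlety is the degenerate $(0 \text{ vs. } \infty)$ structure of $\fid$, which nevertheless yields the sharp norm bound $\adaptnorm{\b f - f_n}_\Y \leq \delta_n$ used above.
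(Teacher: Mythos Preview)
Your proposal is correct and follows essentially the same route as the paper: compute the conjugate of the ball indicator as its support function, bound the cross term $\sp{\adj{E}\musc,f_n-\b f}$ via $\norm{f_n-\b f}\leq\delta_n$, and plug into~\eqref{eq:Breg_dist_one-sided} (resp.~\eqref{eq:Breg_dist_symm}) with $\alpha_n\equiv\alpha$ constant. The only cosmetic difference is that you state $\fid^*(q\mid f)=\sp{q,f}+\delta_n\norm{q}_{\Y^*}$ as an equality (which it indeed is) whereas the paper writes the same bound as an inequality, and you display the residual $\delta_n/\alpha$ term explicitly while the paper absorbs it directly into the $\bigO(\delta_n)$ with the remark that $\alpha_n=const$.
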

\begin{proof}
Taking the convex conjugate of $\fid(\cdot \mid f)$ defined in~\eqref{eq:fid_fun_charf}, we get
\begin{eqnarray*}
    \adj\fid(q \mid f) &=& \sup_{v \colon \norm{v-f} \leq \delta} \sp{q,v} = \sup_{v \colon \norm{v-f} \leq \delta} \sp{q,v-f} + \sp{q,f} \\ 
    &\leq& \sup_{v \colon \norm{v-f} \leq \delta}\norm{q}\norm{v-f}  + \sp{q,f} \leq \delta\norm{q}  + \sp{q,f}.
\end{eqnarray*}
Hence,
\begin{eqnarray*}
    \adj\fid(\alpha_n E^*\mu^\dagger \mid f_n) - \sp{\alpha_n E^*\mu^\dagger, \b f} &\leq& \delta_n \alpha_n \norm{E^* \mu^\dagger} + \sp{\alpha_n E^*\mu^\dagger, f_n-\b f} \\
    &\leq& 2\delta_n \alpha_n \norm{E^* \mu^\dagger}
\end{eqnarray*}
since $\norm{f_n - \b f} \leq \delta_n$. Plugging this into the estimate in Theorem~\ref{thm:conv_rate_one_sided} (resp. Theorem~\ref{thm:conv_rate_symm}) and remembering that $\alpha_n = const$ for all $n$, we get the assertion.
\end{proof}

\subsubsection{Sum of fidelities}
Having studied a plethora of explicit examples of fidelity functions, we now turn to combinations of several fidelities, each of which can be studied as above.
Let us assume that $\fid$ is the sum of two other fidelity functions $\fid_1$ and $\fid_2$, i.e.,
\begin{equation}\label{eq:fidelity_sum}
    \fid(v \mid f) = \fid_1(v \mid f)+\fid_2( v \mid f).
\end{equation}
Such fidelities were studied e.g. in \cite{holler2018coupled} and allow to simultaneously handle data from different modalities.
Furthermore, in \cite{hintermuller2013subspace,langer2017automated,yue2014locally} fidelites of $L^1+L^2$-type were analysed and used for image restoration in the presence of mixed Gaussian and impulse noise.

If $\fid_1$ and $\fid_2$ are proper, it holds
\begin{equation}\label{ineq:conj_sum}
    \fid^*(q \mid f)\leq \inf_{r\in\Y^*}\left\lbrace\fid_1^*(r \mid f) + \fid_2^*(q-r \mid f)\right\rbrace=:\left(\fid_1^*(\cdot \mid f)\infconv\fid_2^*(\cdot \mid f)\right)(q),
\end{equation}
where the term on the right hand side is the so-called infimal convolution of $\fid_1$ and $\fid_2$.
Let us assume that we have estimates of the form 
\begin{equation}\label{ineq:rate_functions}
    \fid_i^*(\alpha_n E^* \mu^\dagger \mid f_n) - \langle \alpha_n E^* \mu^\dagger \mid \b f) \leq R_i(\alpha_n, \fid_i(\b f \mid f_n)), \quad i = 1,2,
\end{equation}
for each of the fidelities. 
The functions $R_i$ are assumed to be non-decreasing in both arguments and we set $R_i(\alpha,\cdot)=\infty$ for $\alpha<0$.
Combining \eqref{ineq:conj_sum} and \eqref{ineq:rate_functions} we obtain
\begin{eqnarray*}
    &&\fid^*(\alpha_n E^* \mu^\dagger \mid f_n) - \langle \alpha_n E^* \mu^\dagger, \b f \rangle \\
    &=&\inf_{w\in\Y}\left\lbrace\fid_1^*(w \mid f_n) + \fid_2^*(\alpha_n E^* \mu^\dagger - w \mid f_n)\right\rbrace - \langle \alpha_n E^* \mu^\dagger, \b f \rangle \\
    &\leq& \inf_{\lambda\in[0,1]} \fid_1^*(\lambda\alpha_n E^* \mu^\dagger \mid f_n ) - \langle \lambda\alpha_n E^* \mu^\dagger, \b f \rangle + \fid_2^*((1-\lambda)\alpha_n E^*\mu^\dagger \mid f_n) - \langle (1-\lambda)\alpha_n E^* \mu^\dagger, \b f \rangle  \\
    &\leq& \inf_{\lambda\in[0,1]} R_1(\lambda\alpha_n, \fid_1(\b f \mid f_n)) + R_2((1-\lambda)\alpha_n,\fid_2(\b f \mid f_n)) \\
    &\leq& \inf_{\lambda\in[0,1]} R_1(\lambda\alpha_n, \delta_n) + R_2((1-\lambda)\alpha_n,\delta_n) \\
    &=&  \left(R_1(\cdot, \delta_n) \infconv R_2(\cdot, \delta_n)\right)(\alpha_n),
\end{eqnarray*}
where we used the monotonicity properties of $R_i$ in the last two steps.
This shows that the convergence rate for $\fid$ can be estimated by the infimal convolution of the rates associated to $\fid_1$ and $\fid_2$, i.e.
\begin{equation}
    \D{\reg}{p^\dagger}(u_n,\Jminsol) = \bigO\left[ \left(R_1(\cdot,\delta_n) \infconv R_2(\cdot,\delta_n)\right) (\alpha_n) +\eta_n \right].
\end{equation}
If $\Y$ is an AM-space (cf. Theorem~\ref{thm:conv_rate_symm}), the same rate holds for the symmetric Bregman distance~$\Dsymm{\reg} (u_n,\Jminsol)$.

\subsubsection{Infimal convolution of fidelities}
Let us consider the case that $\fid$ is given by the infimal convolution of two other fidelities $\fid_1$ and $\fid_2$
\begin{eqnarray}\label{eq:fidelity_inf_conv}
    \fid(v \mid f)  =  \inf_{w \in \Y}  \fid_1(w \mid 0)+\fid_2( v-w \mid f) = (\fid_1(\cdot \mid 0) \infconv \fid_2(\cdot \mid f))(v)
\end{eqnarray}
Such fidelities are also chosen for the removal of mixed noise in image restoration (see e.g.~\cite{aggarwal2016hyperspectral} for an application to hyperspectral unmixing and~\cite{calatroni2017infimal} and the references therein for image denoising with mixtures of Gaussian, impulse, and Poisson noise).
Since the infimal convolution \emph{optimally} decomposes $v$ into a noise part $w$, which is small in $\fid_1$, and a residual $v-w$, which is close to the data $f$ in $\fid_2$, such fidelities are more suitable for this purpose than the plain sum of fidelities, studied in the previous section.
By standard calculus for infimal convolutions, if $\fid_1$ and $\fid_2$ are proper, it holds
\begin{equation}\label{eq:conj_inf_conv}
    \fid^*(q \mid f) = \fid_1^*(q \mid 0) + \fid_2^*( q \mid f).
\end{equation}
Furthermore, under the hypothesis that $\fid_1$ is coercive, $\fid_2$ is bounded from below, and both are weakly-* lower semicontinuous convex functions, it holds that $\fid$ is weakly-* lower semicontinuous, proper, and exact~(see~\cite{stromberg1994study} for the statement and~\cite{Bauschke_Combettes:2011} for a proof on Hilbert spaces which generalises to Banach spaces).
The latter means that the infimum in the definition of $\fid$ is attained.
In particular, there are $\b g,\b h \in \Y$ such that $\b f = \b g + \b h$ and 
\begin{equation}\label{eq:delta_inf_conv}
\delta_n=\fid(\b f, f_n)=\fid_1(\b g \mid 0)+\fid_2(\b h \mid f_n).
\end{equation}
Furthermore, from \eqref{eq:conj_inf_conv} we get
\begin{eqnarray*}
    &&\fid^*(\alpha_n E^* \mu^\dagger \mid f_n) - \langle \alpha_n E^* \mu^\dagger, \b f \rangle \\
    &=& \fid_1^*(\alpha_n E^* \mu^\dagger \mid 0) + \fid_2^*(\alpha_n E^* \mu^\dagger \mid f_n) - \langle \alpha_n E^* \mu^\dagger, \b f \rangle \\
    &=& \left(\fid_1^*(\alpha_n E^* \mu^\dagger \mid 0) - \langle \alpha_n E^* \mu^\dagger, \b g \rangle\right) + \left(\fid_2^*(\alpha_n  E^* \mu^\dagger \mid f_n) - \langle \alpha_n E^* \mu^\dagger, \b h\rangle\right).
\end{eqnarray*}
Consequently, we have to estimate the two terms in brackets which only depend on the individual fidelites $\fid_1$ and $\fid_2$.
In all the examples studied above, such estimates are available.
Using the functions $R_i$ defined in \eqref{ineq:rate_functions} above together with \eqref{eq:delta_inf_conv}, we can estimate
\begin{eqnarray*}
    &&\fid^*(\alpha_n E^* \mu^\dagger \mid f_n) - \langle \alpha_n E^* \mu^\dagger, \b f \rangle \\
    &\leq& R_1(\alpha_n, \fid_1(\b g \mid 0)) + R_2(\alpha_n, \fid_2(\b h \mid f_n)) \\
    &\leq& R_1(\alpha_n, \delta_n) + R_2(\alpha_n, \delta_n).
\end{eqnarray*}
Hence, we get the statement that the rate of convergence of a infimal convolution of fidelities can be estimated by the sum of the individual rates associated to $\fid_1$ and $\fid_2$, i.e.
\begin{equation}
    \D{\reg}{p^\dagger}(u_n, \Jminsol) = \bigO\left(R_1(\alpha_n, \delta_n) + R_2(\alpha_n, \delta_n)+\eta_n \right).
\end{equation}
This is in contrast to the rate of a sum of fidelities being given by the infimal convolution of the rates, as shown in the previous section.

If $\Y$ is an AM-space (cf. Theorem~\ref{thm:conv_rate_symm}), the same rate holds for the symmetric Bregman distance~$\Dsymm{\reg} (u_n,\Jminsol)$.

\section{Discrepancy Principle}\label{sec:discr_pr}
When the operator is known exactly, Morozov's discrepancy principle~\cite{Morozov:1966,engl:1996} can be used to select the regularisation parameter $\alpha_n$. In the case of a squared norm fidelity $\fid(v \mid f) = \norm{v-f}^2$ this amounts to selecting $\alpha_n$ such that
\begin{equation}\label{eq:discr_pr1}
    \alpha_n = \sup\{\alpha>0 \colon \norm{A u_n^{\alpha_n} - f_n}^2 \leq \tau\delta_n \},
\end{equation}
where $u_n^{\alpha_n}$ is the regularised solution corresponding the the regularisation parameter $\alpha_n$ and $\tau>1$ is a parameter. Here we assume that $\norm{\b f - f_n}^2 \leq \delta_n$ (and not $\norm{\b f - f_n}^2 \leq \delta_n^2$) to be consistent with our earlier notation. Convergence rates for this choice of $\alpha_n$ in the case of an exact operator and an arbitrary convex regularisation functional were obtained in~\cite{Bonesky:2008}. For the data fidelity given by the Kullback-Leibler divergence, the discrepancy principle is studied in~\cite{Sixou:2018}.

In the case of an imperfect operator, the discrepancy principle needs to be modified. 
When the operator error is measured using the operator norm, i.e. one assumes that an approximate operator $A_h$ is available such that
\begin{equation*}
    \norm{A-A_{h_n}}_{\mathcal L(\X, \Y)} \leq h_n,
\end{equation*}
one can choose $\alpha_n$ as follows~\cite{TGSYag} (in the case of a squared norm fidelity in the Hilbert space setting)
\begin{equation}\label{eq:discr_pr2}
    \alpha_n = \sup\{\alpha>0 \colon \norm{Au_n^{\alpha_n}-f_n}^2 = (\sqrt{\delta_n} + h_n \norm{u_n^{\alpha_n}})^2\}.
\end{equation}
If the fidelity term is not based on a norm and does not satisfy the triangle inequality, such a generalisation is not available. 

Since in our case the operator error is explicitly accounted for  through the constraints in~\eqref{eq:primal}, we can use the discrepancy principle in its original form~\eqref{eq:discr_pr1} with an \emph{arbitrary} fidelity term. We will choose $\alpha_n$ such that
\begin{equation}\label{eq:discr_pr3}
    \alpha_n = \sup\{\alpha>0 \colon  \fid(v_n^{\alpha} \mid f_n) \leq \tau\delta_n \},
\end{equation}
where $v_n^{\alpha_n}$ solves~\eqref{eq:primal} with the regularisation parameter $\alpha_n$ and $\tau>1$ is a parameter.

\begin{remark}
If the solution $v_n^{\alpha_n}$ is unique, then we have
\begin{equation}\label{eq:discr_pr_equality}
    \fid(v_n^{\alpha_n} \mid f_n) = \tau\delta_n.
\end{equation}
In case of non-uniqueness, we can always choose a solution $v_n^{\alpha_n}$ such that~\eqref{eq:discr_pr_equality} is satisfied, following the argument in \cite[Prop. 3.5 -- Rem. 3.8]{Anzengruber_Ramlau:2009} and using convexity of the objective function in~\eqref{eq:primal}.
\end{remark}


\subsection{Existence}
In this section we study well-posedness of the discrepancy principle, meaning that there is a regularisation parameter $\alpha_n$ which meets~\eqref{eq:discr_pr3}.
Let $(u^\alpha, v^\alpha)$ be a solution of~\eqref{eq:primal} corresponding to the parameter $\alpha>0$. Define the following functions:
\begin{equation}\label{eq:h_and_j}
    h(\alpha) \defeq \fid(v^\alpha \mid f_n), \quad j(\alpha) \defeq \reg(u^\alpha).
\end{equation}

\begin{lemma}\label{lem:mon}
The function $j(\alpha)$ is  monotone non-increasing and $h(\alpha)$ is monotone non-decreasing in $\alpha$. 

\end{lemma}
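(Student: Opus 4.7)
The plan is the classical two-parameter comparison argument, which only uses optimality of the solutions at each parameter and the fact that the constraint set $\{(u,v) : B_n u \leq E v\}$ in~\eqref{eq:primal} does not depend on $\alpha$. Hence any optimal pair for parameter $\alpha_2$ is feasible for the problem at parameter $\alpha_1$ and vice versa, so the two optimality inequalities can be combined freely.

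Concretely, I would fix $0 < \alpha_1 < \alpha_2$ with optimal pairs $(u^{\alpha_i}, v^{\alpha_i})$ for $i=1,2$. Optimality at $\alpha_1$ and $\alpha_2$ gives the two inequalities
\begin{align*}
\tfrac{1}{\alpha_1}\fid(v^{\alpha_1}\mid f_n) + \reg(u^{\alpha_1}) &\leq \tfrac{1}{\alpha_1}\fid(v^{\alpha_2}\mid f_n) + \reg(u^{\alpha_2}),\\
\tfrac{1}{\alpha_2}\fid(v^{\alpha_2}\mid f_n) + \reg(u^{\alpha_2}) &\leq \tfrac{1}{\alpha_2}\fid(v^{\alpha_1}\mid f_n) + \reg(u^{\alpha_1}).
\end{align*}
Adding them, the $\reg$-terms cancel and one obtains
\begin{equation*}
\Bigl(\tfrac{1}{\alpha_1}-\tfrac{1}{\alpha_2}\Bigr)\bigl(\fid(v^{\alpha_1}\mid f_n)-\fid(v^{\alpha_2}\mid f_n)\bigr) \leq 0.
\end{equation*}
Since $\tfrac{1}{\alpha_1}-\tfrac{1}{\alpha_2}>0$, it follows that $h(\alpha_1)=\fid(v^{\alpha_1}\mid f_n)\leq \fid(v^{\alpha_2}\mid f_n)=h(\alpha_2)$, proving that $h$ is non-decreasing.

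To obtain monotonicity of $j$, I would plug this back into the second optimality inequality, which rearranges to
\begin{equation*}
\reg(u^{\alpha_2})-\reg(u^{\alpha_1}) \leq \tfrac{1}{\alpha_2}\bigl(\fid(v^{\alpha_1}\mid f_n)-\fid(v^{\alpha_2}\mid f_n)\bigr) \leq 0,
\end{equation*}
so $j(\alpha_2)\leq j(\alpha_1)$, establishing the non-increasing property. There is essentially no obstacle here: the argument is purely algebraic and does not even require any regularity of $\fid$ or $\reg$ beyond properness, nor any property of the operators $A_n^{l,u}$ beyond $\alpha$-independence of the feasibility constraint. The only subtle point to flag is that in case of non-unique optimisers the statement should be read as holding for some (or, with a trivial strengthening, all) selections $(u^\alpha, v^\alpha)$ of optimisers, since the two chains above make no use of uniqueness.
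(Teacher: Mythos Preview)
Your argument is correct and is exactly the standard two-parameter comparison that the paper alludes to by citing \cite{Burger_Osher_TV_Zoo}; the paper does not spell out the details. Your remark on non-unique optimisers also matches the paper's subsequent Remark.
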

\begin{proof}
The proof is similar to \cite{Burger_Osher_TV_Zoo}.
\end{proof}
\begin{remark}
If either $\fid(\cdot \mid f_n)$ or $\reg(\cdot)$ is strictly convex, then $h(\alpha)$ and $j(\alpha)$ are indeed uniquely defined (the argument is similar to~\cite{Bungert_Burger:2019}). Otherwise the lemma applies to $\fid(v^\alpha \mid f_n)$ and $\reg(u^\alpha)$ for any solution $(u^\alpha, v^\alpha)$ of~\eqref{eq:primal}.
\end{remark}
\begin{remark}
Since $j$ and $h$ are monotone functions, they are in particular continuous for almost all values of $\alpha>0$.
\end{remark}
\begin{lemma}\label{lem:lsc_of_h_and_j}
Functions $h$ and $j$ defined in \eqref{eq:h_and_j} are lower semicontinuous.
\end{lemma}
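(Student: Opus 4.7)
The plan is to prove that at any fixed $\alpha>0$ and for any sequence $\alpha_k\to\alpha$ (necessarily eventually bounded and bounded away from zero), we have
\begin{equation*}
h(\alpha)\leq\liminf_{k\to\infty} h(\alpha_k),\qquad j(\alpha)\leq\liminf_{k\to\infty} j(\alpha_k),
\end{equation*}
by a standard direct-method / stability-under-perturbation argument on problem~\eqref{eq:primal}. First I would pick an optimal pair $(u^{\alpha_k},v^{\alpha_k})$ for each $k$. Comparing against any fixed feasible test point (e.g.\ $(\Jminsol,\b f)$) in the optimality inequality for $\alpha_k$, and using that $\alpha_k$ lies in a compact subinterval of $(0,\infty)$ together with $\reg,\fid(\cdot\mid f_n)\geq 0$, I obtain uniform bounds on $\reg(u^{\alpha_k})$ and on $\fid(v^{\alpha_k}\mid f_n)$.

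Next I would extract compactness exactly as in Theorem~\ref{thm:primal_ex}. The bound on $\reg(u^{\alpha_k})$ and the weak-* sequential compactness of sublevel sets of $\reg$ (Assumption~\ref{ass:reg_fid}) give a subsequence with $u^{\alpha_k}\wsto u^*$. Assumption~\ref{ass:op} then yields $A^{l,u}u^{\alpha_k}\wsto A^{l,u} u^*$, and monotonicity of the Banach lattice norm applied to $0\leq v^{\alpha_k}-A^l u^{\alpha_k}\leq (A^u-A^l)u^{\alpha_k}$ bounds $v^{\alpha_k}$ in $\Y=\V^*$. The sequential Banach--Alaoglu theorem extracts $v^{\alpha_k}\wsto v^*$, and weak-* closedness of order intervals (Lemma~\ref{lem:dual_order}) gives feasibility $A^l u^*\leq v^*\leq A^u u^*$.

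Then I would show $(u^*,v^*)$ is optimal for the problem with parameter $\alpha$. For any feasible $(\tilde u,\tilde v)$,
\begin{equation*}
\frac{1}{\alpha_k}\fid(v^{\alpha_k}\mid f_n)+\reg(u^{\alpha_k})\leq\frac{1}{\alpha_k}\fid(\tilde v\mid f_n)+\reg(\tilde u).
\end{equation*}
Taking $\liminf_k$ on both sides, using $\alpha_k\to\alpha>0$, non-negativity of $\fid(\cdot\mid f_n)$ and $\reg$, and their weak-* lower semicontinuity (Assumption~\ref{ass:reg_fid}), the left-hand side satisfies
\begin{equation*}
\frac{1}{\alpha}\fid(v^*\mid f_n)+\reg(u^*)\leq\liminf_{k\to\infty}\left[\frac{1}{\alpha_k}\fid(v^{\alpha_k}\mid f_n)+\reg(u^{\alpha_k})\right],
\end{equation*}
so $(u^*,v^*)$ minimises the problem at $\alpha$. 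In particular, by Lemma~\ref{lem:mon} and the remark following it, the values $\fid(v^*\mid f_n)$ and $\reg(u^*)$ agree with $h(\alpha)$ and $j(\alpha)$ regardless of which minimiser is chosen, so weak-* lower semicontinuity of $\fid(\cdot\mid f_n)$ and $\reg$ yields
\begin{equation*}
h(\alpha)=\fid(v^*\mid f_n)\leq\liminf_{k\to\infty}\fid(v^{\alpha_k}\mid f_n)=\liminf_{k\to\infty} h(\alpha_k),
\end{equation*}
and the analogous inequality for $j$.

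The main obstacle is the joint limit in $\alpha_k$ and in $(u^{\alpha_k},v^{\alpha_k})$: I have to split the lower semicontinuity argument additively (so that the $\frac{1}{\alpha_k}$ prefactor does not interact badly with $\liminf$), which is why I use $\fid,\reg\geq 0$ and extract the scalar limit $\frac{1}{\alpha_k}\to\frac{1}{\alpha}$ separately from the weak-* $\liminf$ of $\fid(v^{\alpha_k}\mid f_n)$. A secondary subtlety, already handled by Lemma~\ref{lem:mon} and its remark, is ensuring that $h$ and $j$ are well-defined (independent of the selection of minimiser), so that the chain of inequalities really compares $h(\alpha),j(\alpha)$ to $\liminf h(\alpha_k),\liminf j(\alpha_k)$. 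Everything else is a routine adaptation of the compactness argument used in Theorem~\ref{thm:primal_ex}.
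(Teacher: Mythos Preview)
Your proposal is correct and is precisely a fleshed-out version of the paper's three-line sketch: take $\alpha_k\to\alpha$, extract weak-* limits of the minimisers via the compactness argument of Theorem~\ref{thm:primal_ex}, verify the limit pair solves the problem at $\alpha$, and conclude by lower semicontinuity of $\fid$ and $\reg$. One small caveat: the remark after Lemma~\ref{lem:mon} does not actually assert that $h(\alpha)$ and $j(\alpha)$ are independent of the chosen minimiser in the non-strictly-convex case (only that monotonicity holds for any selection), so your appeal to it for well-definedness slightly overstates what is written; the paper itself glosses over this same point.
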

\begin{proof}
We just sketch the proof. 
Letting $\alpha_k\to\alpha$, one can easily see that the corresponding solutions $(v_k,u_k)$ converge (up to a subsequence) weakly-* to $(v,u)$ which solve the problem for $\alpha$. 
Hence, by the lower semicontinuity of $\fid$ and $\reg$ the assertion follows.
\end{proof}

\begin{theorem}\label{thm:well-posedness_discrepancy}
Suppose that for all $n$
\begin{equation*}
    C\delta_n \leq \liminf_{\alpha\to\infty}\fid(v^\alpha \mid f_n)
\end{equation*}
for some constant $C>1$, which does not depend on $n$. 

Then the discrepancy principle~\eqref{eq:discr_pr3} is well-posed for all $\tau\in(1,C)$, i.e. there exists $\alpha_n > 0$ and a solution $(u^{\alpha_n},v^{\alpha_n})$ of~\eqref{eq:primal} corresponding to $\alpha = \alpha_n$ and $f = f_n$ such that~\eqref{eq:discr_pr3} is satisfied.
\end{theorem}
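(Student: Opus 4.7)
The plan is to set $\alpha_n \defeq \sup\{\alpha>0 \colon h(\alpha)\leq\tau\delta_n\}$ with $h$ as in~\eqref{eq:h_and_j}, and to verify that this set is non-empty and bounded. Together with Theorem~\ref{thm:primal_ex}, which provides a solution of~\eqref{eq:primal} for the parameter $\alpha_n$, this yields the claim. The two monotonicity/semicontinuity properties from Lemmas~\ref{lem:mon} and~\ref{lem:lsc_of_h_and_j} will be used only at the end to confirm that $\alpha_n$ actually belongs to the set, so that a solution satisfying the discrepancy bound exists.

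For positivity of $\alpha_n$, I would compare the optimal value at $(u^\alpha,v^\alpha)$ with the feasible competitor $(\Jminsol,\b f)$, which is admissible by~\eqref{eq:J_minimizing} and the definition of $\Jminsol$. This gives
\begin{equation*}
\frac{1}{\alpha}h(\alpha)+j(\alpha)\leq \frac{1}{\alpha}\fid(\b f\mid f_n)+\reg(\Jminsol)\leq \frac{\delta_n}{\alpha}+\reg(\Jminsol),
\end{equation*}
so $h(\alpha)\leq \delta_n+\alpha\reg(\Jminsol)$. For $\alpha$ smaller than $(\tau-1)\delta_n/\reg(\Jminsol)$ (which is strictly positive, since $\reg(\Jminsol)$ must be finite and in fact positive, otherwise the running assumption $C\delta_n\leq\liminf h(\alpha)$ with $C>1$ would be violated), the right-hand side is strictly less than $\tau\delta_n$, so the set in~\eqref{eq:discr_pr3} is non-empty and $\alpha_n>0$.

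For finiteness, I would invoke the hypothesis $C\delta_n\leq\liminf_{\alpha\to\infty}h(\alpha)$ with $\tau<C$: there exists $\alpha^\star>0$ such that $h(\alpha)>\tau\delta_n$ for every $\alpha>\alpha^\star$, hence $\alpha_n\leq\alpha^\star<\infty$. Combining both bounds gives $\alpha_n\in(0,\infty)$, and Theorem~\ref{thm:primal_ex} furnishes a minimiser $(u^{\alpha_n},v^{\alpha_n})$ for this parameter.

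Finally, to confirm that this minimiser actually attains $h(\alpha_n)\leq\tau\delta_n$ (so that $\alpha_n$ belongs to the set over which the supremum is taken), I would use the monotonicity of $h$ from Lemma~\ref{lem:mon} to select a sequence $\alpha_k\nearrow\alpha_n$ with $h(\alpha_k)\leq\tau\delta_n$, and then apply the lower semicontinuity from Lemma~\ref{lem:lsc_of_h_and_j} to conclude $h(\alpha_n)\leq\liminf_k h(\alpha_k)\leq\tau\delta_n$. The main subtlety lies in this last step, because $h$ need not be continuous at $\alpha_n$; once both monotonicity and lower semicontinuity are in hand, however, the argument is essentially immediate.
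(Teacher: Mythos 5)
Your proposal is correct and follows essentially the same route as the paper: comparison with the feasible pair $(\Jminsol,\b f)$ to get $h(\alpha)\leq\delta_n+\alpha\reg(\Jminsol)$ and hence $h(\alpha)\leq\tau\delta_n$ for small $\alpha$, the liminf hypothesis to bound the admissible set from above, and monotonicity plus lower semicontinuity of $h$ (Lemmas~\ref{lem:mon} and~\ref{lem:lsc_of_h_and_j}) to transfer the bound to $\alpha_n$ itself. Your additional remarks on the finiteness of $\alpha_n$ and the positivity of $\reg(\Jminsol)$ are correct details that the paper leaves implicit.
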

\begin{proof}
For every $\alpha>0$ because of the feasibility of $(\Jminsol,\b f)$ we get
\begin{equation*}
    \fid(v^\alpha \mid f_n) + \alpha \reg(u^\alpha) \leq \fid(\b f \mid f_n) + \alpha \reg(\Jminsol)
\end{equation*}
and in particular
\begin{equation*}
    h(\alpha)=\fid(v^\alpha \mid f_n) \leq \delta_n + \alpha \reg(\Jminsol),
\end{equation*}
for almost all $\alpha>0$. 
Letting $\alpha \downarrow 0 $ we obtain using the monotonicity of $h$ that
\begin{equation}\label{ineq:lower_limit_h}
    h(0+)\leq \delta_n.
\end{equation}
On the other hand, by assumption it holds
\begin{equation}\label{ineq:upper_limit_h}
    C\delta_n \leq \liminf_{\alpha\to\infty}\fid(v^\alpha \mid f_n).
\end{equation}
Hence, in light of \eqref{ineq:lower_limit_h},~\eqref{ineq:upper_limit_h}, and the monotonicity of $h$, there exists $\alpha_n >0 $ such that
\begin{equation*}
    h(\alpha) \leq \tau \delta_n, \quad \forall 0 < \alpha < \alpha_n,
\end{equation*}
and $\tau$ can be chosen in $(1,C)$.
Since $h$ is lower semicontinuous according to Lemma~\ref{lem:lsc_of_h_and_j}, we get that
\begin{equation*}
    \sup_{\alpha<\alpha_n}h(\alpha) \leq \tau \delta_n
\end{equation*}
which proves the assertion.
\end{proof}


\begin{remark}
The assumption of Theorem~\ref{thm:well-posedness_discrepancy} is rather weak.
For instance, if $\fid(0 \mid f_n)<\infty$, one can show that $v^\alpha\wsto 0$ as $\alpha\to\infty$. Hence, one can relax the assumption to $C\delta_n\leq\fid(0 \mid f_n)$ which, for $\delta_n$ sufficiently small, is fulfilled in many applications.
\end{remark}

 

\subsection{Convergence rates}
Our goal in this section is to obtain convergence rates similar to those in Theorems~\ref{thm:conv_rate_one_sided} (respectively Theorem~\ref{thm:conv_rate_symm}) for the parameter choice rule~\eqref{eq:discr_pr3}.  

\begin{lemma}\label{lem:discr_pr}
Let $\alpha_n$ be chosen according to~\eqref{eq:discr_pr3}. Then the following inequality holds for all $n$
\begin{equation}
    \reg(u_n^{\alpha_n}) \leq \reg(\Jminsol).
\end{equation}
If conditions of Theorem~\ref{thm:robinson_primal} are satisfied, then also the following inequality holds    
\begin{equation}
    \sp{\adj{E}\mu_n^{\alpha_n},\b f - v_n^{\alpha_n}} \leq 0.
\end{equation}
\end{lemma}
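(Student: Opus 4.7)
The plan is to prove both inequalities by exploiting the equality $\fid(v_n^{\alpha_n} \mid f_n) = \tau\delta_n$ (guaranteed by the discrepancy principle together with the remark after~\eqref{eq:discr_pr_equality}), combined with the noise bound $\fid(\b f \mid f_n) \leq \delta_n$ and the fact that $\tau > 1$.

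For the first inequality, I would compare the value of the objective in~\eqref{eq:primal} at the optimum $(u_n^{\alpha_n}, v_n^{\alpha_n})$ with its value at the feasible point $(\Jminsol, \b f)$ (feasible because $A\Jminsol = \b f$ so $A^l \Jminsol \leq \b f \leq A^u \Jminsol$). This yields
\begin{equation*}
    \frac{1}{\alpha_n}\fid(v_n^{\alpha_n} \mid f_n) + \reg(u_n^{\alpha_n}) \leq \frac{1}{\alpha_n}\fid(\b f \mid f_n) + \reg(\Jminsol).
\end{equation*}
Rearranging and using $\fid(v_n^{\alpha_n} \mid f_n) = \tau\delta_n$ together with $\fid(\b f \mid f_n) \leq \delta_n$ gives
\begin{equation*}
    \reg(u_n^{\alpha_n}) \leq \reg(\Jminsol) + \frac{\delta_n - \tau\delta_n}{\alpha_n} = \reg(\Jminsol) + \frac{(1-\tau)\delta_n}{\alpha_n} \leq \reg(\Jminsol),
\end{equation*}
since $\tau > 1$ and $\delta_n \geq 0$.

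For the second inequality, under the assumptions of Theorem~\ref{thm:robinson_primal} we can invoke Theorem~\ref{thm:subgrads} to conclude that $\alpha_n \adj{E}\mu_n^{\alpha_n} \in \subdiff \fid(v_n^{\alpha_n} \mid f_n)$. The subgradient inequality applied at the point $\b f$ then reads
\begin{equation*}
    \fid(\b f \mid f_n) \geq \fid(v_n^{\alpha_n} \mid f_n) + \sp{\alpha_n \adj{E}\mu_n^{\alpha_n}, \b f - v_n^{\alpha_n}}.
\end{equation*}
Hence
\begin{equation*}
    \alpha_n \sp{\adj{E}\mu_n^{\alpha_n}, \b f - v_n^{\alpha_n}} \leq \fid(\b f \mid f_n) - \fid(v_n^{\alpha_n} \mid f_n) \leq \delta_n - \tau\delta_n = (1-\tau)\delta_n \leq 0,
\end{equation*}
and dividing by $\alpha_n > 0$ yields the claim.

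No step seems to present a real obstacle; the proof is essentially bookkeeping once one recognises that the discrepancy principle gives the sharp equality $\fid(v_n^{\alpha_n} \mid f_n) = \tau\delta_n$. The only subtle point is that in the possibly non-unique case, one must select the solution $v_n^{\alpha_n}$ for which equality holds (as discussed in the remark preceding the statement); for this distinguished solution both estimates follow directly.
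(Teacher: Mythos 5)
Your proposal is correct and follows essentially the same route as the paper: the first inequality is obtained by the identical comparison of objective values at $(u_n^{\alpha_n},v_n^{\alpha_n})$ and the feasible point $(\Jminsol,\b f)$ using $\fid(v_n^{\alpha_n}\mid f_n)=\tau\delta_n$ and $\tau>1$, and the second inequality via the subgradient relation $\alpha_n \adj{E}\mu_n^{\alpha_n}\in\subdiff\fid(v_n^{\alpha_n}\mid f_n)$ from Theorem~\ref{thm:subgrads}, which is exactly the content of the paper's Fenchel--Young equality~\eqref{Fenchel_Young_est2} combined with the inequality~\eqref{Fenchel_Young_est1}. The two arguments differ only in presentation (subgradient inequality versus subtracting conjugate identities), not in substance.
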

\begin{proof}
Comparing the value of the objective function in~\eqref{eq:primal} at the optimal solution $(u_n^{\alpha_n},v_n^{\alpha_n})$ and $(\Jminsol, \b f)$ and using~\eqref{eq:discr_pr3}, we get that
\begin{equation*}
     \tau\delta_n + \alpha_n \reg(u_n^{\alpha_n}) = \fid(v_n^{\alpha_n} \mid f_n) + \alpha_n \reg(u_n^{\alpha_n}) \leq \fid(\b f \mid f_n) + \alpha_n \reg(\Jminsol) \leq \delta_n + \alpha_n \reg(\Jminsol).
\end{equation*}
Since $\tau>1$, this yields the first inequality.

For the second one we use the Fenchel--Young inequality. Subtracting~\eqref{Fenchel_Young_est2} from~\eqref{Fenchel_Young_est1} we obtain
\begin{eqnarray*}
    \sp{\alpha_n \adj{E}\mu_n^{\alpha_n}, \b f - v_n^{\alpha_n}} & \leq & \fid(\b f \mid f_n) + \fid^*(\alpha_n \adj{E}\mu_n^{\alpha_n} \mid f_n) - \fid(v_n^{\alpha_n} \mid f_n) - \fid^*(\alpha_n \adj{E}\mu_n^{\alpha_n} \mid f_n) \\
    & \leq & \delta_n - \tau\delta_n \leq 0,
\end{eqnarray*}
which completes the proof.
\end{proof}
\begin{theorem}
Under Assumptions of Theorem~\ref{thm:primal_conv} and with $\alpha_n$ chosen according to~\eqref{eq:discr_pr3}, $u_n^{\alpha_n}$
converges weakly-* to a $\reg$-minimising solution of~\eqref{eq:Au=f}, i.e.
\begin{equation*}
    u_n^{\alpha_n} \wsto \Jminsol.
\end{equation*}
\end{theorem}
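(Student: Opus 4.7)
The plan is to mimic the proof of Theorem~\ref{thm:primal_conv}, replacing the \emph{a priori} bound $\reg(u_n)\leq\reg(\Jminsol)+\delta_n/\alpha_n$ by the sharper estimate $\reg(u_n^{\alpha_n})\leq\reg(\Jminsol)$ provided by Lemma~\ref{lem:discr_pr}, so that no restriction on the behaviour of $\alpha_n$ is needed.

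First I would apply Lemma~\ref{lem:discr_pr} to conclude that $\reg(u_n^{\alpha_n})\leq \reg(\Jminsol)$ uniformly in $n$. By the weak-* sequential compactness of the sublevel sets of $\reg$ (Assumption~\ref{ass:reg_fid}), there is a subsequence (not relabelled) and $u_\infty\in\X$ with $u_n^{\alpha_n}\wsto u_\infty$. Assumption~\ref{ass:op} together with the operator convergence $\norm{A^{l,u}_n-A}\leq\eta_n\to 0$ then implies
\begin{equation*}
A^l_n u_n^{\alpha_n}\wsto Au_\infty, \qquad A^u_n u_n^{\alpha_n}\wsto Au_\infty.
\end{equation*}
Using the sandwich $A^l_n u_n^{\alpha_n}\leq v_n^{\alpha_n}\leq A^u_n u_n^{\alpha_n}$, the sequence $v_n^{\alpha_n}$ is norm-bounded (as in the proof of Theorem~\ref{thm:primal_ex}) and therefore, by sequential Banach--Alaoglu and weak-* closedness of order intervals (Lemma~\ref{lem:dual_order}), passes to a weak-* limit which must coincide with $Au_\infty$; hence $v_n^{\alpha_n}\wsto Au_\infty$.

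Next I would use the discrepancy bound $\fid(v_n^{\alpha_n}\mid f_n)\leq\tau\delta_n\to 0$ together with $f_n\to \b f$ (which follows from $\fid(\b f\mid f_n)\leq\delta_n\to 0$ and the coercivity property $\fid(v\mid f)=0\Leftrightarrow v=f$ combined with joint weak-* lower semicontinuity) and the joint weak-* lower semicontinuity of $\fid$ to deduce
\begin{equation*}
\fid(Au_\infty\mid \b f)\leq \liminf_{n\to\infty}\fid(v_n^{\alpha_n}\mid f_n)=0,
\end{equation*}
so that $Au_\infty=\b f$ by Assumption~\ref{ass:reg_fid}. Finally, the weak-* lower semicontinuity of $\reg$ applied to $\reg(u_n^{\alpha_n})\leq\reg(\Jminsol)$ yields $\reg(u_\infty)\leq\reg(\Jminsol)$, so $u_\infty$ is itself a $\reg$-minimising solution. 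A standard subsequence-subsequence argument then upgrades the convergence from a subsequence to the full sequence, completing the proof.

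The only delicate point is the passage $v_n^{\alpha_n}\wsto \b f$: one must be careful that the weak-* limit of $v_n^{\alpha_n}$ is unambiguously $Au_\infty$ despite the fact that feasibility only squeezes $v_n^{\alpha_n}$ between two sequences with the same weak-* limit. This is handled exactly as in Theorem~\ref{thm:primal_conv} using Lemma~\ref{lem:dual_order}, and no new ingredient beyond Lemma~\ref{lem:discr_pr} is required.
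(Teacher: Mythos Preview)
Your proposal is correct and follows exactly the route the paper takes: invoke Lemma~\ref{lem:discr_pr} for the uniform bound $\reg(u_n^{\alpha_n})\leq\reg(\Jminsol)$, use the discrepancy condition for $\fid(v_n^{\alpha_n}\mid f_n)\to 0$, and then rerun the argument of Theorem~\ref{thm:primal_conv}. The paper compresses all of this into a single sentence referring back to that proof, while you have simply written out the steps in full.
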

\begin{proof}
Since $\reg(u_n^{\alpha_n})$ is bounded uniformly in $n$ and  $\fid(v_n^{\alpha_n} \mid f_n) = \tau \delta_n \to 0$, we immediately get the desired result following the proof of Theorem~\ref{thm:primal_conv}.
\end{proof}

\begin{theorem}\label{thm:conv_rates_discr}
Let $\alpha_n$ be chosen according to~\eqref{eq:discr_pr3}. Then, under the assumptions of Theorem~\ref{thm:conv_rate_one_sided}, the following estimate holds for the one-sided Bregman distance between $u_n^{\alpha_n}$ and~$\Jminsol$
\begin{equation*}
    \D{\reg}{p^\dagger} (u_n^{\alpha_n},\Jminsol) \leq \sp{E^*\musc,v_n^{\alpha_n} - \b f} + C\eta_n,
\end{equation*}
where $p^\dagger = -\adj{B}\musc$ is the subgradient from Assumption~\ref{ass:sc}.
Under the assumptions of Theorem~\ref{thm:conv_rate_symm} the same estimate holds for the symmetric Bregman distance.
\end{theorem}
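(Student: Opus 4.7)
The plan is to mimic the proofs of Theorems \ref{thm:conv_rate_one_sided} and \ref{thm:conv_rate_symm} while replacing the two places where the a-priori bound on $\delta_n/\alpha_n$ and the Fenchel--Young estimate on $\fid^*(\alpha_n E^*\musc\mid f_n)$ entered by the two consequences of the discrepancy principle collected in Lemma \ref{lem:discr_pr}.

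For the one-sided case I would start from the same identity as in the proof of Theorem \ref{thm:conv_rate_one_sided}, namely
\begin{equation*}
    D^{p^\dagger}_\reg(u_n^{\alpha_n},\Jminsol) = \reg(u_n^{\alpha_n}) - \reg(\Jminsol) + \sp{\musc, B u_n^{\alpha_n}} - \sp{\musc, B\Jminsol},
\end{equation*}
then split $\sp{\musc, B u_n^{\alpha_n}} = \sp{\musc, B_n u_n^{\alpha_n}} + \sp{\musc, (B-B_n)u_n^{\alpha_n}}$ and use the feasibility $B_n u_n^{\alpha_n} \leq E v_n^{\alpha_n}$ together with $\musc \geq 0$ and $B\Jminsol = E\b f$, exactly as in equation~\eqref{eq:Breg_dist_one-sided_intermediate}. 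At this point the proof of Theorem \ref{thm:conv_rate_one_sided} used primal optimality to control $\reg(u_n)-\reg(\Jminsol)$ against $\delta_n/\alpha_n - \fid(v_n\mid f_n)/\alpha_n$; here, instead, I would invoke the first inequality of Lemma \ref{lem:discr_pr} to simply drop $\reg(u_n^{\alpha_n}) - \reg(\Jminsol) \leq 0$. The operator-error term $\sp{\musc,(B-B_n)u_n^{\alpha_n}}$ is bounded by $C\eta_n$ because $\norm{u_n^{\alpha_n}}$ is bounded (the sublevel set $\{\reg \leq \reg(\Jminsol)\}$ is weakly-* sequentially compact by Assumption~\ref{ass:reg_fid}, hence norm-bounded), giving the claimed inequality.

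For the symmetric case I would follow the chain in the proof of Theorem \ref{thm:conv_rate_symm} verbatim up to the estimate
\begin{equation*}
    \Dsymm{\reg}(u_n^{\alpha_n},\Jminsol) \leq \sp{\adj{E}\mu_n^{\alpha_n}, \b f - v_n^{\alpha_n}} - \sp{\adj{E}\musc, \b f - v_n^{\alpha_n}} + C\eta_n,
\end{equation*}
obtained via the complementarity relation~\eqref{eq:complementarity} together with feasibility of $(\Jminsol,\b f)$ and $(u_n^{\alpha_n},v_n^{\alpha_n})$. The Fenchel--Young step used in Theorem \ref{thm:conv_rate_symm} to bound the first inner product by $\delta_n + \fid^*(\alpha_n E^*\musc \mid f_n) - \sp{\alpha_n E^*\musc,\b f}$ is now replaced by the second inequality of Lemma \ref{lem:discr_pr}, which tells us that $\sp{\adj{E}\mu_n^{\alpha_n}, \b f - v_n^{\alpha_n}} \leq 0$ precisely because the discrepancy principle saturates the fidelity at $\tau\delta_n > \delta_n$. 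Dropping that term and rewriting the remaining sign yields the desired estimate.

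The main obstacle is a conceptual rather than technical one: one has to recognize that Lemma \ref{lem:discr_pr} is tailored exactly to neutralize the two places where the a-priori rate proofs picked up $\delta_n/\alpha_n$ contributions, so that the discrepancy-principle proof has no remaining dependence on $\alpha_n$. Once this is in hand, both estimates follow by the same template as before, with the only genuinely new ingredient being the a-priori boundedness of $\norm{u_n^{\alpha_n}}$, which must be extracted from the first inequality of Lemma \ref{lem:discr_pr} rather than from a noise-dependent bound as in Theorem \ref{thm:primal_conv}.
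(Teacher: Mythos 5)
Your proposal is correct and follows essentially the same route as the paper: both parts start from the intermediate estimates \eqref{eq:Breg_dist_one-sided_intermediate} and \eqref{Dsymm_est1} and then substitute the two inequalities of Lemma~\ref{lem:discr_pr} for the primal-optimality and Fenchel--Young steps of the a-priori proofs, using $B\Jminsol = E\b f$ to collapse the remaining terms into $\sp{E^*\musc, v_n^{\alpha_n}-\b f}$. Your added remark on where the boundedness of $\norm{u_n^{\alpha_n}}$ now comes from (the level set $\{\reg \leq \reg(\Jminsol)\}$ rather than a noise-dependent bound) is a point the paper leaves implicit, but it does not change the argument.
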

\begin{proof}
We start with the estimate~\eqref{eq:Breg_dist_one-sided_intermediate}. Using Lemma~\ref{lem:discr_pr}, we obtain
\begin{eqnarray*}
    \D{\reg}{p^\dagger}(u_n^{\alpha_n},\Jminsol) &\leq& \reg(\Jminsol) + \sp{E^*\musc,v_n^{\alpha_n}} + C\eta_n \\
    &=& \sp{-B^*\musc,\Jminsol} + \sp{E^*\musc,v_n^{\alpha_n}} + C\eta_n \\
    &=& \sp{-\musc,E \b f} + \sp{E^*\musc,v_n^{\alpha_n}} + C\eta_n \\
    &=& \sp{E^*\musc,v_n^{\alpha_n} - \b f} + C\eta_n,
\end{eqnarray*}
which yields the first assertion. For the second assertion, we use~\eqref{Dsymm_est1} and Lemma~\ref{lem:discr_pr} and obtain
\begin{eqnarray*}
    \Dsymm{\reg} (u_n,\Jminsol) & \leq & \sp{\adj{E}\mu_n, \b f - v_n^{\alpha_n}} - \sp{\adj{E}\musc, \b f - v_n^{\alpha_n}} + C\eta_n \\
    & \leq & \sp{\adj{E}\musc, v_n^{\alpha_n} - \b f} + C\eta_n.
\end{eqnarray*}
\end{proof}




\paragraph{Strongly coercive fidelities.} For a strongly coercive fidelity terms such that \eqref{eq:fid_coerc} holds, we immediately get, using the Cauchy-Schwarz inequality, that
\begin{eqnarray*}
    \D{\reg}{p^\dagger} (u_n^{\alpha_n},\Jminsol) &\leq&  \norm{E^*\musc} \norm{v_n^{\alpha_n} - \b f} + C\eta_n \\
    &\leq& \norm{E^*\musc} (\norm{v_n^{\alpha_n} - f_n} + \norm{f_n - \b f}) + C\eta_n \\
    &\leq& \norm{E^*\musc} (\fid(v_n^{\alpha_n} \mid f_n) +  \fid(\b f \mid f_n))^{\frac1\lambda}  + C\eta_n
\end{eqnarray*}
and therefore we get the following rate
\begin{equation*}
    \D{\reg}{p^\dagger}(u_n^{\alpha_n},\Jminsol) = \bigO\left(\delta_n^{\frac1\lambda} + \eta_n\right),
\end{equation*}
which coincides with the optimal rate in Theorem~\ref{thm:conv_rates_coercive}.

\paragraph{$\phi$-divergences.} For any $\phi$-divergence that satisfies Pinsker's inequality~\cite{Saso_Verdu:2016} with exponent $\lambda$
\begin{equation*}
    \norm{v-f}^\lambda \leq C \fid(v \mid f),
\end{equation*}
where $v,f \in \P(\Omega)$, we have the same situation as above. In particular, for the Kullback-Leibler divergence, the $\chi^2$-divergence an the squared Hellinger distance $\lambda=2$ and 
\begin{eqnarray*}
    \D{\reg}{p^\dagger} (u_n^{\alpha_n},\Jminsol) 
    &=& \bigO(\sqrt{\delta_n} + \eta_n),
\end{eqnarray*}
which coincides with the optimal rate~\eqref{eq:phi_div_optim_rate}. 

We summarise all convergence rates for obtained in this paper in Table~\ref{tab:rates}.

\begin{table}[htp!]
    \centering
    \begin{tabular}{c||l|l|l}
         Fidelity & A priori rate & Optimal rate & Discr. principle \\
         \hhline{=#=|=|=}
         \begin{tabular}{@{}c@{}}
              KL- and $\chi^2$-divergences,  \\
              sq. Hellinger distance
         \end{tabular}   
         & $\bigO\left(\frac{\delta}{\alpha} + \alpha +\eta\right)$ & $\bigO(\sqrt{\delta}+\eta)$ & $\bigO(\sqrt{\delta}+\eta)$ \\
         \hline
         Total Variation & $\bigO(\delta+\eta)$  & 
         $\bigO(\delta+\eta)$ & $\bigO(\delta+\eta)$ \\
         \hline
         Wasserstein-$p$ distance & 
         \begin{tabular}{@{}lc}
              $\bigO\left(\frac{\delta}{\alpha} + \alpha^\frac{1}{p-1} + \delta^\frac{1}{p} +\eta\right)$, &$p>1$  \\
              $\bigO(\delta+\eta)$, &$p=1$ 
         \end{tabular}
         & $\bigO(\delta^\frac1p + \eta)$ & $\bigO(\delta^\frac1p + \eta)$ \\ 
         \hline
         \begin{tabular}{@{}c@{}}
         Characteristic function \\ of a norm ball 
         \end{tabular}
         & $\bigO(\delta+\eta)$ & $\bigO(\delta+\eta)$ & $\bigO(\delta+\eta)$ \\
         \hline
         \begin{tabular}{@{}c@{}}
         $\lambda$-strongly coercive \\ fidelities 
         \end{tabular}
         & 
         \begin{tabular}{@{}lc}
              $\bigO\left(\frac{\delta}{\alpha} + \alpha^\frac{1}{\lambda-1} + \delta^\frac{1}{\lambda} +\eta\right)$, &$\lambda>1$  \\
              $\bigO(\delta+\eta)$, &$\lambda=1$
         \end{tabular}
         & $\bigO(\delta^\frac{1}{\lambda} + \eta)$ & $\bigO(\delta^\frac{1}{\lambda} + \eta)$
    \end{tabular}
    \caption{\revthree{Summary of convergence rates for different fidelities in terms of the data error $\delta$, the operator error $\eta$ and the regularisation parameter $\alpha$. Whenever $\alpha$ is absent in the a priori rate, exact penalisation occurs and the rate is independent of $\alpha$ as long as it is smaller than a fixed constant. Optimal rates correspond to an optimal choice of $\alpha$ in the a priori rate.}}
    \label{tab:rates}
\end{table}

\section{Conclusions}
In this work we have proven convergence rates in Bregman distances for variational regularisation in Banach lattices for problems with imperfect forward operators and general fidelity functions. Our results apply to many classes of fidelity functions and recover known convergence rates for norm-type fidelities and the Kullback-Leibler divergence in the case of exact operators.
In addition, we have derived convergence rates for sums and infimal convolutions of fidelity functions, as used for mixed-noise removal. Furthermore, we have analysed an extension of Morozov's discrepancy principle to problems with operator errors in the Banach lattice setting, which does not rely on the triangle inequality and hence applies to a broader class of fidelity functions.


\section*{Acknowledgements}
This work was supported by the European Union’s Horizon 2020 research and innovation programme under the Marie Sk{\l}odowska-Curie grant agreement No. 777826 (NoMADS).

MB acknowledges support by the Bundesministerium f\"ur Bildung und Forschung under the project id 05M16PMB (MED4D).

YK acknowledges the support of the Royal Society (Newton International Fellowship NF170045), the EPSRC (Fellowship EP/V003615/1), the Cantab Capital Institute for the Mathematics of Information and the National Physical Laboratory. 

CBS acknowledges support from the Leverhulme Trust project on ‘Breaking the non-convexity barrier’, the Philip Leverhulme Prize, the EPSRC grants EP/S026045/1 and EP/T003553/1, the EPSRC Centre Nr. EP/N014588/1, the Wellcome Innovator Award RG98755, European Union Horizon 2020 research and innovation programmes under the Marie Sk{\l}odowska-Curie grant agreement No. 777826 NoMADS and No. 691070 CHiPS, the Cantab Capital Institute for the Mathematics of Information and the Alan Turing Institute.

\printbibliography

\appendix

\section{Banach lattices and duality}



The following definitions and results can be found, e.g., in~\cite{Meyer-Nieberg}.

Let $\U$ be a vector space and ``$\leq$'' be a partial order relation on $\U$ (i.e. a reflexive, antisymmetric and transitive binary relation). For all $x,y \in \U$ we write $x \geq y$ if $y \leq x$. 
The pair $(\U,\leq)$ is called an ordered vector space if the following conditions hold
\begin{eqnarray*}
    x \leq y &\implies& x+z \leq y+z \quad \forall z \in \U, \\
    x \leq y &\implies& ax \leq ay \quad \forall a \in \R_+.
\end{eqnarray*}

An ordered vector space $(\U,\leq)$ is called a vector lattice (or a Riesz space) if any two elements $x,y \in \U$ have a unique supremum $x \vee y$ and infimum $x \wedge y$. For any $x \in \U$ we define
\begin{equation*}
    x_+ \defeq x \vee 0, \quad x_- \defeq (-x)_+, \quad \abs{x} \defeq x_+ + x_-.
\end{equation*}
For any $x \in \U$ it holds that
\begin{equation*}
    x = x_+ - x_-.
\end{equation*}

Let $\norm{\cdot}$ be a norm on $\U$. The triple $(\U, \leq, \norm{\cdot})$ is called a Banach lattice if $(\U,\leq)$ is a vector lattice, $(\U,\norm{\cdot})$ is a Banach space (i.e. it is norm complete) and for all $x,y \in \U$
\begin{equation*}
    \abs{x} \geq \abs{y} \implies \norm{x} \geq \norm{y},
\end{equation*}
or equivalently that $\norm{x} \geq \norm{y}$ for any $x \geq y \geq 0$.

A linear operator $T$ acting between two vector lattices $\U_1$, $\U_2$ is called positive, and we write $T \geq 0$, if $u \geq 0$ implies $Tu \geq 0$ (the inequalities are understood in the sense of partial orders in $U_1$ and $\U_2$, respectively). A linear operator $T$ is called \emph{regular} if it can be written as a difference of two positive operators, $T=T_1-T_2$ with $T_{1,2} \geq 0$. The space of all regular operators $\U_1 \to \U_2$ is itself an ordered vector space with the following partial order
\begin{equation*}
    T_1 \geq T_2 \iff T_1 -T_2 \geq 0.
\end{equation*}

\begin{proposition}[{\cite[Prop. 1.3.5]{Meyer-Nieberg}}]
Let $\U_1$, $\U_2$ be Banach lattices. Then every regular operator $\U_1 \to \U_2$ is (norm) continuous.
\end{proposition}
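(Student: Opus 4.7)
By definition of regularity, every regular operator $T\colon\U_1\to\U_2$ can be written as $T=T_1-T_2$ with $T_{1,2}\geq 0$, and since linear combinations of continuous operators are continuous, the claim reduces at once to showing that every positive linear operator $S\colon\U_1\to\U_2$ between Banach lattices is norm continuous. This reduction is immediate; the substance of the proposition lies in the positive case.

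For the main step I would argue by contradiction. If $S$ were unbounded, one could choose $x_n\in\U_1$ with $\norm{x_n}\leq 2^{-n}$ and $\norm{Sx_n}\geq n$. The key lattice manipulation is to pass from $x_n$ to $\abs{x_n}$. Using the Banach lattice identity $\norm{\abs{x}}=\norm{x}$ we still have $\norm{\abs{x_n}}\leq 2^{-n}$. Since $\abs{x_n}\geq x_n$ and $\abs{x_n}\geq -x_n$, positivity of $S$ yields $S\abs{x_n}\geq Sx_n$ and $S\abs{x_n}\geq -Sx_n$, whence $S\abs{x_n}\geq Sx_n\vee(-Sx_n)=\abs{Sx_n}$. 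The Banach lattice axiom that order bounds imply norm bounds then gives
\begin{equation*}
\norm{S\abs{x_n}}\geq \norm{\abs{Sx_n}}=\norm{Sx_n}\geq n.
\end{equation*}

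To derive the contradiction, I would define
\begin{equation*}
y\defeq\sum_{n=1}^\infty \abs{x_n},
\end{equation*}
which converges in norm because $\sum 2^{-n}<\infty$, so $y\in\U_1$. Each partial sum $y_N=\sum_{n=1}^N\abs{x_n}$ satisfies $y_N\geq \abs{x_m}$ for $N\geq m$, and since the positive cone of a Banach lattice is norm-closed, passing to the limit gives $y\geq\abs{x_m}\geq 0$ for every $m$. Positivity of $S$ then yields $Sy\geq S\abs{x_m}\geq 0$, and the lattice norm inequality gives $\norm{Sy}\geq \norm{S\abs{x_m}}\geq m$ for every $m$, contradicting the finiteness of $\norm{Sy}$.

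The main obstacle is precisely the construction of the single element $y$ that simultaneously dominates all of the $\abs{x_n}$ in the order: this is what allows a purely qualitative positivity assumption to collapse into a quantitative norm bound. Everything else — the reduction from regular to positive, the identities $\norm{\abs{x}}=\norm{x}$ and $\abs{Sx}\leq S\abs{x}$, and the closure of the positive cone under norm convergent sums — is standard Banach lattice bookkeeping.
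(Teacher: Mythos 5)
Your argument is correct and is essentially the standard textbook proof of this result (the paper itself only cites \cite[Prop.~1.3.5]{Meyer-Nieberg} without reproducing a proof): reduce to positive operators, then derive a contradiction from an unbounded positive $S$ by summing the $\abs{x_n}$ into a single dominating element $y$ and using that order domination implies norm domination. All the supporting steps you invoke --- $\norm{\abs{x}}=\norm{x}$, $\abs{Sx}\leq S\abs{x}$ for positive $S$, and norm-closedness of the positive cone (which follows from norm-continuity of the lattice operations) --- are valid in any Banach lattice, so there is no gap.
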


The converse is in general false, i.e. not every continuous operator is regular. \revthree{However, in some settings this is true. We repeat Definition~\ref{def:AM-space} for readers' convenience.
\begin{definition}\label{def:AM-space-2}
A Banach lattice $\Y$ with norm $\norm{\cdot}$ is called an \emph{AM-space} (abstract maximum space) if
\begin{equation*}
    \norm{x \vee y}=\norm{x} \vee \norm{y},\quad \forall x,y \geq 0.
\end{equation*}
An element $\one \in \Y$ which meets
\begin{equation*}
    \one \geq 0,\quad \norm{\one} = 1,\quad \norm{x}\leq 1 \implies \abs{x} \leq \one,
\end{equation*}
is called \emph{unit} of $\Y$.
\end{definition}
\begin{definition}\label{def:AL-space}
A Banach lattice $\Y$ with norm $\norm{\cdot}$ is called an \emph{AL-space} (abstract Lebesgue space) if
\begin{equation*}
    \norm{x \vee y} = \norm{x} + \norm{y},\quad \forall x,y \geq 0.
\end{equation*}
\end{definition}
If either $\Y$ is an $AM$-space with an order unit or $\X$ is an $AL$-space, then every linear bounded operator is regular (under some additional conditions, see~\cite[Thm. 1.5.11]{Meyer-Nieberg} for a precise statement).
}

We need the following result.
\begin{lemma}[Partial order on the dual]\label{lem:dual_order}
Let $\U$ be a Banach space and $\U^*$ be its dual. 
If $(\U,\leq,\norm{\cdot})$ is a Banach lattice, then so is $\U^*$, equipped with the dual norm and the following partial order
\begin{subequations}\label{eq:order_on_dual}
\begin{eqnarray}
    \phi \geq 0 &&:\iff \phi(x)\geq 0, \quad \forall x \in \U, \, x \geq 0,\\
    \phi \geq \psi &&:\iff \phi - \psi \geq 0.
\end{eqnarray}
\end{subequations}
Furthermore, order intervals in $\U^*$ are weakly-* closed.
\end{lemma}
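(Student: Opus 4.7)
The plan is to invoke the classical Kantorovich construction of the order dual of a Riesz space and then check compatibility with the Banach lattice norm, before treating the weak-* closedness of order intervals separately via elementary duality.

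For the lattice structure on $\U^*$, I would define the positive part of $\phi \in \U^*$ by the Kantorovich formula
\begin{equation*}
\phi_+(x) := \sup\{\phi(y) \st 0 \leq y \leq x\}, \qquad x \geq 0.
\end{equation*}
The supremum is finite because $\phi(y) \leq \norm{\phi}\norm{y} \leq \norm{\phi}\norm{x}$ by the compatibility of the norm with the order on $\U$. Standard arguments then show that $\phi_+$ is additive and positively homogeneous on $\U_+$; the nontrivial step is additivity, which rests on the Riesz decomposition property that for $0 \leq z \leq x + y$ with $x,y \geq 0$ one may split $z = z_1 + z_2$ with $0 \leq z_1 \leq x$ and $0 \leq z_2 \leq y$. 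Extending $\phi_+$ to all of $\U$ via $\phi_+(x) := \phi_+(x_+) - \phi_+(x_-)$, and setting $\phi_- := (-\phi)_+$ and $\abs{\phi} := \phi_+ + \phi_-$, one verifies $\phi = \phi_+ - \phi_-$ and that the supremum $\phi \vee \psi = \psi + (\phi - \psi)_+$ exists in $\U^*$, which is the vector-lattice axiom.

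Continuity of $\phi_+$ is immediate from the bound above. For the norm-compatibility condition, the key identity is $\norm{\abs{\phi}} = \norm{\phi}$, which in turn follows from the representation $\abs{\phi}(y) = \sup\{\abs{\phi(z)} \st \abs{z} \leq y\}$ for $y \geq 0$ together with $\abs{z} \leq y \implies \norm{z} \leq \norm{y}$ in $\U$. Then, if $\abs{\phi} \leq \abs{\psi}$ in $\U^*$ and $\norm{x} \leq 1$, we obtain $\abs{\phi(x)} \leq \abs{\phi}(\abs{x}) \leq \abs{\psi}(\abs{x}) \leq \norm{\psi}$, so $\norm{\phi} \leq \norm{\psi}$. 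Completeness of $\U^*$ in the dual norm is classical, so $\U^*$ is a Banach lattice.

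For the second claim, observe that
\begin{equation*}
[\phi, \psi] = \bigcap_{x \in \U,\, x \geq 0} \bigl(\{\mu \in \U^* \st \mu(x) \geq \phi(x)\} \cap \{\mu \in \U^* \st \mu(x) \leq \psi(x)\}\bigr).
\end{equation*}
Each set in this intersection is weakly-* closed because the evaluation map $\mu \mapsto \mu(x)$ is weakly-* continuous for every fixed $x \in \U$, and an arbitrary intersection of weakly-* closed sets is weakly-* closed, hence so is $[\phi, \psi]$. The principal technical obstacle is verifying additivity of the Kantorovich formula and the identity $\abs{\phi}(y) = \sup\{\abs{\phi(z)} \st \abs{z} \leq y\}$, both of which depend on the Riesz decomposition property; the rest of the argument is bookkeeping and the observation that order-interval constraints in $\U^*$ are precisely intersections of weakly-* continuous linear inequalities.
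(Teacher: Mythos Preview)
Your proposal is correct and in fact more self-contained than the paper's own argument. The paper does not construct the lattice operations on $\U^*$ explicitly; it tacitly takes the vector-lattice structure from the cited reference and only verifies norm monotonicity on the positive cone, arguing that for a positive functional $\chi$ the dual norm is already attained as a supremum over $\{x \geq 0 : \norm{x}_\U = 1\}$, whence $0 \leq \psi \leq \phi \implies \norm{\psi}_{\U^*} \leq \norm{\phi}_{\U^*}$. For weak-* closedness the paper uses a sequential argument (if $\phi_k \geq 0$ and $\phi_k \wsto \phi$ then $\phi(x) = \lim_k \phi_k(x) \geq 0$ for each $x \geq 0$), which strictly speaking only yields sequential weak-* closedness. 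Your intersection-of-halfspaces representation gives genuine weak-* closedness without any separability assumption, and your Kantorovich construction makes the lattice structure and the full norm-compatibility $\abs{\phi} \leq \abs{\psi} \implies \norm{\phi} \leq \norm{\psi}$ explicit. So your route buys rigour and a topological (not merely sequential) conclusion, at the cost of spelling out the classical Riesz--Kantorovich machinery; the paper's route is shorter but leans on standard references and delivers only the sequential statement, which is all that is needed in the applications in the body of the paper.
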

\begin{proof}
We need to check that $\phi \geq \psi \geq 0$ implies $\norm{\phi}_{\U^*} \geq \norm{\psi}_{\U^*}$. 
Splitting $x \in \U$ into positive and negative part as $x = x_+ - x_-$ with $x_\pm \geq 0$, we obtain by linearity and non-negativity that
$$\chi(x) = \chi(x_+) - \chi(x_-) \leq \chi(x_+), \quad \chi \in \{\phi,\psi\}.$$
This implies
$$\norm{\chi}_{\U^*} = \sup_{\substack{\norm{x}_\U=1}}\chi(x)=\sup_{\substack{\norm{x}_\U=1 \\ x \geq 0}}\chi(x), \quad \chi \in \{\phi, \psi\}.$$
Hence, we obtain
$$\norm{\phi}_{\U^*} = \sup_{\substack{\norm{x}_\U=1 \\ x \geq 0}}\phi(x) \geq \sup_{\substack{\norm{x}_\U=1 \\ x \geq 0}}\psi(x) = \norm{\psi}_{\U^*},$$
which proves that $\U^*$ is a Banach lattice. 
Now we prove weak-* closedness of order intervals. 
Here it is sufficient to show that whenever $(\phi_k) \subset \U^*$ converges weakly$^*$ to some $\phi\in\U^*$ and meets $\phi_k \geq 0$ for all $k \in \N$ it holds $\phi \geq 0$.
Using the assumptions we get
$$0 \leq \lim_{k\to\infty}\phi_k(x) = \phi(x),\quad \forall x \in \U,\,x\geq 0,$$
which according to \eqref{eq:order_on_dual} means $\phi \geq 0$.
\end{proof}


We also need the following result unrelated to Banach lattices.

\begin{lemma}\label{lem:weak-star_cont}
Let $A:\U^*\to\V^*$ be a bounded linear operator mapping between the duals of two Banach spaces $\U$ and $\V$, and let $J_\U$ and $J_\V$ be the canonical embeddings of $\U$ and $\V$ into $\U^{**}$ and $\V^{**}$.
If $A^*J_\V(\V) \subset J_\U(\U)$, then $A$ is weak-* to weak-* continuous.
\end{lemma}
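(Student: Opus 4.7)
The strategy is to reduce weak-* to weak-* continuity of $A$ to a statement about the form of the functionals $x \mapsto \langle Ax, v\rangle$ for $v \in \V$, and then use the hypothesis on $A^* J_\V(\V)$ to exhibit them as evaluation maps.

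First I would recall the standard characterisation: a linear map $A \colon \U^* \to \V^*$ is weak-* to weak-* continuous if and only if, for every $v \in \V$, the real-valued linear functional $x \mapsto J_\V(v)(Ax)$ is weak-* continuous on $\U^*$. This is because the weak-* topology on $\V^*$ is generated by the family $\{J_\V(v) \colon v \in \V\}$, and continuity into a topology generated by a family of functionals is equivalent to continuity of each composition with those functionals. Moreover, a linear functional on $\U^*$ is weak-* continuous precisely when it equals evaluation at some $u \in \U$, i.e.\ it lies in $J_\U(\U)$.

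Next I would rewrite the composition using the definition of the Banach space adjoint:
\begin{equation*}
    J_\V(v)(Ax) = (A^* J_\V(v))(x), \qquad x \in \U^*, \, v \in \V.
\end{equation*}
By the hypothesis $A^* J_\V(\V) \subset J_\U(\U)$, for each $v \in \V$ there exists (a unique) $u_v \in \U$ with $A^* J_\V(v) = J_\U(u_v)$. Substituting gives
\begin{equation*}
    J_\V(v)(Ax) = J_\U(u_v)(x) = x(u_v),
\end{equation*}
which is manifestly a weak-* continuous function of $x \in \U^*$.

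Applying the characterisation from the first step concludes the proof. There is no real obstacle here; the only subtlety is to remember that weak-* continuous linear functionals on a dual space are exactly the evaluations at elements of the predual, which is precisely what the hypothesis $A^*J_\V(\V)\subset J_\U(\U)$ encodes.
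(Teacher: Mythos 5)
Your proof is correct, but it takes a different route from the paper's. The paper argues sequentially: it takes a weak-* convergent sequence $\eta_k \wsto \eta$ and chases a chain of duality pairings, $\langle A\eta_k, y\rangle = \langle A^*J_\V(y), \eta_k\rangle = \langle J_\U(x), \eta_k\rangle = \langle \eta_k, x\rangle \to \langle \eta, x\rangle = \langle A\eta, y\rangle$, using the hypothesis to replace $A^*J_\V(y)$ by some $J_\U(x)$. You instead invoke two pieces of general topology/duality theory: the weak-* topology on $\V^*$ is the initial topology generated by $\{J_\V(v)\}_{v\in\V}$, so continuity of $A$ reduces to weak-* continuity of each $J_\V(v)\circ A = A^*J_\V(v)$; and the weak-* continuous linear functionals on $\U^*$ are exactly $J_\U(\U)$, which is precisely what the hypothesis supplies. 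Your argument establishes genuine topological (net) continuity rather than only sequential continuity, and it makes transparent that the hypothesis $A^*J_\V(\V)\subset J_\U(\U)$ is not merely sufficient but is the natural (and in fact necessary) condition; the cost is reliance on the characterisation of the dual of $(\U^*,w^*)$, which the paper's more elementary computation avoids. Both are complete proofs of the stated lemma.
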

\begin{proof}
Let $(\eta_k) \subset \U^*$ converge weakly-* to $\eta \in \U^*$. 
Using that for any $y \in \V$ it holds $A^*J_\V(y) = J_\U(x)$ for some $x\in\U$, we obtain
\begin{eqnarray*}
&&\langle A \eta, y \rangle_{\V^*,\V}=\langle J_\V(y), A \eta \rangle_{\V^{**},\V^*}=\langle A^* J_\V(y), \eta \rangle_{\U^{**},\U^*} = \langle J_\U(x), \eta \rangle_{\U^{**},\U^*} \\ 
&=&\langle \eta, x \rangle_{\U^*,\U} = \lim_{k\to\infty} \langle \eta_k, x\rangle_{\U^*,\U} = \lim_{k\to\infty} \langle J_\U(x), \eta_k\rangle_{\U^{**},\U^*} \\ 
&=&\lim_{k\to\infty} \langle A^* J_\V(y), \eta_k\rangle_{\U^{**},\U^*}=\lim_{k\to\infty} \langle J_\V(y), A\eta_k\rangle_{\V^{**},\V^*}=\lim_{k\to\infty} \langle A\eta_k, y\rangle_{\V^{*},\V},
\end{eqnarray*}
which means that $(A \eta_k)$ converges weakly-* to $A \eta$.
\end{proof}
\begin{remark}
A sufficient condition for $A^*J_\V(\V) \subset J_\U(\U)$ in Lemma~\ref{lem:weak-star_cont} is that $A=B^*$ for a bounded linear operator $B : \V \to \U$.
In this case $A^*=B^{**}:\V^{**}\to \U^{**}$ and it is easy to see that $B^{**}J_\V(y)=J_\U(By)$ for every $y\in\V$ which means $A^*J_\V(\V)=B^{**}J_\V(\V)=J_\U(B\V)\subset J_\U(\U)$.
\end{remark}

\end{document}